\theoremstyle{plain}
\newtheorem{theorem}{Theorem}[section]
\newtheorem{lemma}[theorem]{Lemma}
\newtheorem{corollary}[theorem]{Corollary}
\theoremstyle{definition}
\newtheorem{definition}[theorem]{Definition}
\theoremstyle{remark}
\newtheorem{remark}[theorem]{Remark}
\numberwithin{equation}{section}
\newcommand{\abs}[1]{\lvert#1\rvert}
\newcommand{\Lr}[1]{\left(#1\right)}
\newcommand{\lr}[1]{\Bigl(#1\Bigr)}
\newcommand{\nm}[2]{\|\,#1\,\|_{#2}}
\newcommand{\mc}[1]{\mathcal{#1}}
\newcommand{\mb}[1]{\mathbb{#1}}
\newcommand{\ms}[1]{\mathscr{#1}}
\newcommand{\mr}[1]{\mathrm{#1}}
\newcommand{\wh}[1]{\widehat{#1}}
\begin{document}
\title[Sharp uniform approx. for spectral Barron functions by DNN]{Sharp uniform approximation for spectral Barron functions by deep neural networks}
\author[Y. L. Liao]{Yulei Liao}
\address{Department of Mathematics, Faculty of Science, National University of Singapore, 10 Lower Kent Ridge Road, Singapore 119076, Singapore}
\email{ylliao@nus.edu.sg}
\author[P. B. Ming \and H. Yu]{Pingbing Ming \and Hao Yu}
\address{SKLMS, Institute of Computational Mathematics and Scientific/Engineering Computing, AMSS, Chinese Academy of Sciences, Beijing 100190, China}
\address{School of Mathematical Sciences, University of Chinese Academy of Sciences, Beijing 100049, China}
\email{mpb@lsec.cc.ac.cn, yuhao@amss.ac.cn}
\thanks{The work of Ming is supported by the robotic AI-Scientist platform of the Chinese Academy of Sciences. This work was also funded by National Natural Science Foundation of China through Grant No. 12371438.}
\keywords{Spectral Barron space, Deep neural network, Approximation theory}
\date{\today}
\subjclass[2020]{41A25, 41A46, 42A38, 68T07}

\begin{abstract}
This work explores the neural network approximation capabilities for functions within the spectral Barron space $\mathscr{B}^s$, where $s$ is the smoothness index. We demonstrate that for functions in $\ms{B}^{1/2}$, a shallow neural network (a single hidden layer) with $N$ units can achieve an $L^p$-approximation rate of $\mathcal{O}(N^{-1/2})$. This rate also applies to uniform approximation, differing by at most a logarithmic factor. Our results significantly reduce the smoothness requirement compared to existing theory, which necessitate functions to belong to $\mathscr{B}^1$ in order to attain the same rate. Furthermore, we show that increasing the network's depth can notably improve the approximation order for functions with small smoothness. Specifically, for networks with $L$ hidden layers, functions in $\mathscr{B}^s$ with $0 < sL \le 1/2$ can achieve an approximation rate of $\mathcal{O}(N^{-sL})$. The rates and prefactors in our estimates are dimension-free. We also confirm the sharpness of our findings, with the lower bound closely aligning with the upper, with a discrepancy of at most one logarithmic factor.
\end{abstract}
\maketitle

\section{Introduction}\label{Section:intro}
In recent years, machine learning has achieved remarkable advancements, revolutionizing numerous fields with its innovative applications. Part of such success lies in the neural network's capability to effectively approximate particularly high-dimensional functions of interest. It is not our intention to provide a comprehensive review of this field, and we refer to \cites{Pinkus1999, DeVore2021NNapproximation} for a state of the art overview.

Originating from Barron's seminal work~\cites{Barron:1992,Barron:1993,Barron:1994}, the Barron class has been widely employed to study the approximation and generalization properties of the neural networks. 
Specifically, for a cube $\Omega:=[0,1]^d$ and a real-valued function $f:\Omega\to\mb{R}$, if the spectral norm $\upsilon_{f,s}$ is finite, then $f$ belongs to the Barron class. Here the spectral norm $\upsilon_{f,s}$ with $s\ge 0$ is defined by
\[
	\upsilon_{f,s}{:}=\inf_{f_e|_\Omega=f}\int_{\mathbb{R}^d}\abs{\xi}^s_1\abs{\wh{f}_e(\xi)}\mathrm{d}\xi,
\]
where $\abs{\cdot}_1$ is the $\ell^1$-norm for any $\mb{R}^d$-vector, $f_e:\mb{R}^d\to\mb{R}$ is any extension of $f$ from $\Omega$ to $\mb{R}^d$, and $\widehat{f}_e$ is the Fourier transform of $f_e$ in the sense of distribution. 
The spectral Barron space $\ms{B}^s(\Omega)$ is the Banach space which consists of all $f \in \ms{S}'(\Omega)$ such that 
\[
\nm{f}{\ms{B}^s(\Omega)}{:}=\inf_{f_e|_\Omega=f}\int_{\mathbb{R}^d}(1+\abs{\xi}_1)^{s}\abs{\wh{f}_e(\xi)}\mr{d}\xi.
\]
is finite.
This kind of spaces may be traced back to a particular case of H\"ormander space~\cites{Hormander:1963,DGSM60:2014}, often referred to as the Fourier Lebesgue space~\cite{Pilipovic:2010}. 
The index $s$ reflects the smoothness of functions and $\ms{B}^s(\Omega)\hookrightarrow\ms{B}^t(\Omega)$ for $s\ge t$. %obviously,
A comprehensive study on such space concerning the completeness and the embedding results was conducted in~\cite{LiaoMing:2023}, 
which proved that $\ms{B}^s(\Omega)\hookrightarrow C^s(\Omega)$, where $C^s(\Omega)$ is the H\"older space with exponent $s$. 
Our example in Appendix~\ref{AppendixB} illustrate that generally, the H\"older continuity of $\ms{B}^s$-functions can not be higher than $s$.
Therefore, the target function space in this paper, $\ms{B}^s(\Omega)$ with small $s$, may contain many rough functions.  

An outstanding feature of the spectral Barron space is that its functions can be approximated by neural networks without suffering from the curse of dimensionality. Specifically, functions in $\ms{B}^1(\Omega)$ can be approximated by shallow neural networks (SNN) with $N$ sigmoidal units, resulting in an error bound $\mathcal{O}(N^{-1/2})$; see~\cite{Barron:1993}. Studies in~\cites{Siegel:2020,Xu:2020} have generalized this to $\ms{B}^{k+1}(\Omega)$-functions in the $H^k$-norm with ReLU units. Furthermore, improved $L^2$-approximation rates have been achieved in~\cites{Bresler:2020,Siegel:2022} for $\ms{B}^s(\Omega)$-functions with large $s$. Most of these works necessitate that the target function belongs to $\ms{B}^s(\Omega)$ with $s\ge 1$ to attain the Monte Carlo rate of $\mc{O}(N^{-1/2})$. 
The initial demonstration that $\ms{B}^{1/2}(\Omega)$-functions are sufficient to reach the  $\mathcal{O}(N^{-1/2})$ rate in $L^2$-norm was presented in \cite{Siegel:2022}, utilizing a multiscale expansion, though the prefactor in this case may depend exponentially on the dimension. %albeit with an exponentially dependent prefactor on the dimension. 
This exponential dependence was removed in \cite{LiaoMing:2023}, where a universal constant was introduced.

For general $p$ with $1\le p<\infty$, \textsc{Makovoz}~\cite{Makovoz1996Random} first established the SNN $L^p$-approximation rate $\mc{O}(N^{-1/2-1/(p^*d)})$ for $\ms{B}^1(\Omega)$-functions, where $p^\ast$ is the minimal even integer satisfying $p^\ast\ge p$. $L^{p}$-approximation rate $\mc{O}(N^{-\min(s(p),1/2)})$ for Barron-type functions with smoothness index $s(p)=1-1/p$ has been showed in~\cite{MengMing:2022}, while the target function space therein is slightly smaller than ours. Working in the variation spaces of SNNs, the authors 
in~\cite{Donahue:1997}
and~\cite{SiegelXu:2022} investigated $L^{p}$-approximation of the convex hull of a dictionary by convex combinations.

As to the uniform approximation, the original $\mc{O}(N^{-1/2})$ approximation rate for  $\ms{B}^1(\Omega)$-functions using SNNs with Heaviside activation function has been established by \textsc{Barron} in~\cite{Barron:1992}. The same rate has been proved in~\cite{Yukich1995Supnorm}  for $\ms{B}^{2m+1}(\Omega)$-functions in the $W^{m,\infty}$-norm. Given the practical preference to ReLU networks, an $\mc{O}(N^{-1/2-1/d}\sqrt{\ln N})$ rate for $\ms{B}^{2}(\Omega)$-functions has been achieved in~\cite{Barron:2018}. Recently, an $\mc{O}(N^{-1/2})$ approximation rate for $\ms{B}^{1}(\Omega)$-functions have been obtained in~\cite{Caragea:2023}. 
Improved approximation rates for $\ms{B}^s(\Omega)$-functions with large $s$ have been derived in~\cite{MaSiegelXu:2022}. 
%All these studies explored approximation by SNN with at least $\ms{B}^{1}(\Omega)$ smoothness. It is worthwhile mentioning that 
Sharp uniform approximation rates have also been proved for variation space corresponding to shallow $\operatorname{ReLU}^{k}$ networks.  
We refer to~\cites{Bach2017Breaking,MaSiegelXu:2022,SiegelXu:2022, Siegel2025optimal} for more details.
\iffalse
Furthermore, the spectral Barron space $\ms{B}^{k+1}(\Omega)$ is contained by the variation space corresponding to shallow $\operatorname{ReLU}^{k}$ networks~\cite{SiegelXu:2022,Siegel:2023}. Using results from geometric discrepancy theory, the rate $\mc{O}(N^{-1/2-(2k+1)/(2d)})$ is proved for the variation space corresponding to shallow $\operatorname{ReLU}^{k}$ networks~\cite{Bach2017Breaking,MaSiegelXu:2022,Siegel2025optimal}. This rate is optimal up to a log factor if mild control on the outer layer coefficients is imposed~\cite{SiegelXu:2022}.\fi

The distinction between deep neural networks (DNNs) and SNNs has been enlightened in the depth separation theorems proved in~\cites{Eldan:2016,Telgarsky:2016,Shamir:2022}, which highlighted the greater expressive power of DNNs. Only a limited number of works have addressed the problem of DNN approximation for spectral Barron functions. The pioneering work~\cite{BreslerNagaraj:2020} established an $L^2$-approximation rate of $\mathcal{O}(N^{-sL/2})$ for $\ms{B}^{s}(\Omega)$-functions using a ReLU network with $L$ hidden layers and $N$ units per layer, where $0 < sL \leq 1$. However, this result is not optimal, even for SNN, i.e. $L=1$. The improved approximation rate to $\mathcal{O}(N^{-sL})$ for $\ms{B}^{s}(\Omega)$-functions with $0 < sL \leq 1/2$ has recently been proved in~\cite{LiaoMing:2023}. Despite these advancements, the $L^p$-approximation for general $p$ remains an open problem.

It is worth mentioning the approximation results for neural networks targeting analytic function~\cite{E2018analytic}, band-limited function~\cites{Du:2019,Montanelli2021bandlimited}, mixed derivative Besov spaces~\cites{Suzuki:2021,Bolcskei:2021}, and H\"older spaces~\cites{Yarotsky2017error,Yarotsky2018,Yarotsky2020phase,LuShenYangZhang:2021,Shen2022Optimal,Jiao2023Holder}. These studies collectively contribute to a broader understanding of the approximation capabilities of neural networks in various function spaces.

\begin{table}[htbp]\footnotesize
\caption{Approximation rates for functions in $\ms{B}^s(\Omega)$ by NNs with $L$ hidden layers and $N$ units each layer, where $0 <sL\le 1/2$. Only universal constants are implied.}\label{Table: Approximation rates proved in this paper} 
\begin{tabular}{cccc}
\hline\noalign{\smallskip}
   & $L^2(\Omega)$-norm & $L^{p}(\Omega)$-norm, $p>2$  & $L^{\infty}(\Omega)$-norm  \\
\noalign{\smallskip}\hline\noalign{\smallskip}
SNN ($L = 1$) & $2^d\nm{f}{\ms{B}^{1/2}}N^{-1/2}$~\cite{Siegel:2022} & $\sqrt{pd}\nm{f}{\ms{B}^{1-1/p}_2} N^{-1/2}$~\cite{MengMing:2022}{*} & $\sqrt{d}\,\upsilon_{f,1}N^{-1/2}$~\cites{Barron:1992,Caragea:2023} \\
\noalign{\smallskip}\hline\noalign{\smallskip}
DNN & $\nm{f}{\ms{B}^{2s}}N^{-sL}$~\cite{BreslerNagaraj:2020} & -- & --\\
\noalign{\smallskip}\hline\noalign{\smallskip}
Our works & $\upsilon_{f,s} N^{-sL}$~\cite{LiaoMing:2023} & $\sqrt{p}\,\upsilon_{f,s} N^{-sL}$ &  $\sqrt{dL}\,\upsilon_{f,s}N^{-sL}\sqrt{\ln N}$\\
\noalign{\smallskip}\hline\noalign{\smallskip}
Lower bound & $\upsilon_{f,s} N^{-sL}$~\cite{LiaoMing:2023} & \multicolumn{2}{c}{$\upsilon_{f,s} N^{-sL}$}  \\
\noalign{\smallskip}\hline
\end{tabular}
*The space $\ms{B}^{1-1/p}_2\hookrightarrow\ms{B}^{1-1/p}$ with norm $\nm{f}{\ms{B}^{1-1/p}_2}:=\inf_{f_e|_\Omega=f}(\nm{f_e}{L^2(\mb{R}^d)}+\upsilon_{f_e,1-1/p})$.
\end{table}

In this work, we investigate the non-asymptotic $L^{p}$-approximation by DNN for $\ms{B}^s(\Omega)$-functions with small $s$. For $1\le p<\infty$, we establish in Theorem~\ref{Theorem: Lp approximation by deep NN} and Corollary~\ref{Corollary: approximation bounds with spectral semi-norm DNN} that for $\ms{B}^s(\Omega)$-functions with $0<sL\le 1/2$, a deep ReLU network with $L$ hidden layers and $N$ units per layer achieves an $L^p$-approximation rate of $\mc{O}(N^{-sL})$, %The prefactor in the $L^{p}$-bound is an explicit universal constant times $\sqrt{p}$
with an $\mc{O}(\sqrt{p})$ prefactor, which blows up as $p$ tends to infinite. Under the same circumstances, we derive an uniform approximation rate of $\mc{O}(N^{-sL} \sqrt{\ln N})$ in Theorem~\ref{Theorem: Linf approximation by deep NN} and Corollary~\ref{Corollary: Linf approximation bounds with spectral semi-norm DNN}, with an $\mc{O}(\sqrt{dL})$ prefactor. These bounds are sharp, as evidenced by the matching lower bound of $\mc{O}(N^{-sL})$ proved in Theorem~\ref{Theorem: lower bound for DNN approximation}. 
Our results, as summarized in Table~\ref{Table: Approximation rates proved in this paper}, extend the $L^{2}$-approximation bound in \cite{LiaoMing:2023} and provide optimal convergence rates compared to known results. For the sake of comparison, we list all these results in Table~\ref{Table: Approximation rates proved in this paper}. 

The remainder of the paper is organized as follows. In Section~\ref{Section: L^p approximation by shallow NN}, we establish the $L^p$-approximation results for shallow sigmoidal networks. In Section~\ref{Section: deep neural network approximation}, we present the approximation bounds for deep ReLU networks and provide matching lower bounds to demonstrate the sharpness of our results. A proof of a technical lemma and an example showcasing the roughness of $\ms{B}^{s}$-functions are deferred to Appendix~\ref{Appendix Section: Bound for Gamma function} and Appendix~\ref{AppendixB}, respectively.

\section{Shallow sigmoidal networks approximation} \label{Section: L^p approximation by shallow NN}
In this part, we establish the $L^p$-approximation bound stated in Theorem~\ref{Theorem: shallow NN Lp approximation} and Corollary~\ref{Corollary: approximation bounds with spectral semi-norm shallow NN} for SNN with general sigmoidal activation functions. 
\begin{definition}
A sigmoidal function is a bounded function $\sigma:\mb{R}\to\mb{R}$ such that
	\[
		\lim_{t\to-\infty}\sigma(t)=0,\qquad \lim_{t\to\infty}\sigma(t)=1.
	\]
%	For example, the Heaviside function $\chi_{[0,\infty)}$ is a sigmoidal function.
\end{definition}

An example of a sigmoidal function is the Heaviside function $\chi_{[0,\infty)}$, where $\chi_I$ denotes the indicator function\footnote{The indicator function $\chi_I$ equals $1$ on $I$ and $0$ otherwise.}. The following lemma shows that $\chi_{[0,\infty)}$ may be viewed as the limit case of any sigmoidal function. 
%When studying $L^p$-approximation, we can employ $\chi_{[0,\infty)}$ as a bridge and then generalized to general sigmoidal functions. 
It is worthwhile to mention that the gap between sigmoidal function and $\chi_{[0,\infty)}$ cannot be dismissed in the uniform norm, as pointed out by~\cite{Caragea:2023}.
\begin{lemma}\label{lema:sigmoidal}
	Let $1\le p <\infty$, for fixed $\omega\in\mb{R}^d\backslash\{0\}$ and $b\in\mb{R}$,
	\[
		\lim_{\tau\to\infty}\nm{\sigma(\tau(\omega\cdot x+b))-\chi_{[0,\infty)}(\omega\cdot x+b)}{L^p(\Omega)}=0.
	\] 
\end{lemma}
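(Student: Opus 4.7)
The plan is to reduce the statement to pointwise almost-everywhere convergence and then invoke the dominated convergence theorem. Set $y(x):=\omega\cdot x+b$ and let $g_\tau(x):=\sigma(\tau y(x))-\chi_{[0,\infty)}(y(x))$. For any $x\in\Omega$ with $y(x)>0$, the definition of a sigmoidal function yields $\sigma(\tau y(x))\to 1=\chi_{[0,\infty)}(y(x))$ as $\tau\to\infty$; symmetrically, if $y(x)<0$ then $\sigma(\tau y(x))\to 0=\chi_{[0,\infty)}(y(x))$. Hence $g_\tau(x)\to 0$ pointwise on $\{x\in\Omega:y(x)\neq 0\}$.

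The one measurability detail to dispatch is that the exceptional set $E:=\{x\in\Omega:\omega\cdot x+b=0\}$ has $d$-dimensional Lebesgue measure zero. Since $\omega\in\mathbb{R}^d\setminus\{0\}$, $E$ is the intersection of $\Omega$ with an affine hyperplane, so $\abs{E}=0$ and $g_\tau\to 0$ almost everywhere in $\Omega$.

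Finally, since $\sigma$ is bounded by definition, putting $M:=\sup_{t\in\mathbb{R}}\abs{\sigma(t)}$ gives $\abs{g_\tau(x)}\le M+1$ uniformly in $x$ and $\tau$. As $\Omega=[0,1]^d$ has finite measure, the constant $(M+1)^p$ is an integrable dominating function, and the dominated convergence theorem yields
\[
\lim_{\tau\to\infty}\int_\Omega \abs{g_\tau(x)}^p\,\dx=0,
\]
which is the claim. There is no real obstacle here; the only thing to check carefully is that one does not need any quantitative control on the rate at which $\sigma(\pm\infty)$ is approached, precisely because $L^p$ convergence (for finite $p$) is insensitive to the behaviour on the measure-zero hyperplane where the Heaviside function jumps—this is exactly why the analogous statement fails in the uniform norm, as noted just before the lemma.
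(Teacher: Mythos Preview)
Your proof is correct, but it takes a genuinely different route from the paper. You argue via pointwise almost-everywhere convergence plus dominated convergence, which is the cleanest possible argument and avoids any geometric input. The paper instead proceeds quantitatively: for a given $\varepsilon$ it picks $\delta$ so that $\abs{\sigma(t)-\chi_{[0,\infty)}(t)}<\varepsilon$ for $\abs{t}>\delta$, then splits $\Omega$ into the slab $\Omega_1=\{x:\abs{\tau(\omega\cdot x+b)}<\delta\}$ and its complement. On the complement the pointwise bound already gives $\varepsilon^p$; on the slab the integrand is crudely bounded and the measure of $\Omega_1$ is controlled via Ball's cube slicing lemma, which forces $\tau\ge 2\sqrt{2}\delta(1+\nm{\sigma}{L^\infty})^p/(\varepsilon^p\abs{\omega})$. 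The upshot is that the paper's argument delivers an explicit threshold for $\tau$ in terms of $\varepsilon$, $\omega$, and $\nm{\sigma}{L^\infty}$, whereas your DCT argument is non-constructive. Since the lemma is used only qualitatively later (to pass from Heaviside to general sigmoidal networks at the very end of the proof of Theorem~\ref{Theorem: shallow NN Lp approximation}), your approach is entirely adequate and arguably preferable for its brevity; the paper's version would only pay off if one needed explicit control of the inner weights in the final network.
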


\begin{proof}
By definition, for any $\varepsilon>0$, there exists $\delta>0$ large enough such that 
\[
	\abs{\sigma(t)-\chi_{[0,\infty)}(t)}<\varepsilon,  \quad \text{when $\abs{t}>\delta$}.
\]
We divide the cube $\Omega$ into $\Omega_1{:}=\set{x\in\Omega|\abs{\tau(\omega\cdot x+b)}<\delta}$ and $\Omega_2{:}=\Omega\setminus\Omega_1$ with a proper choice of $\tau>0$ later on. We change the variable $t=(\omega\cdot x+b)/\abs{\omega}$, and for any fixed $t$, we recall the cube slicing lemma of \textsc{Ball}~\cite[Theorem 4]{Ball:1986cube} to obtain $\abs{\Omega_t}\le\sqrt{2}$, where $\Omega_t:=\set{x\in\Omega|\omega\cdot x+b=\abs{\omega}t}$. This gives
\begin{align*}
\int_{\Omega_1}\abs{\sigma(\tau(\omega\cdot x+b))-\chi_{[0,\infty)}(\omega\cdot x+b)}^p\mr{d}x&\le\int_{-\delta/(\tau\abs{\omega})}^{\delta/(\tau\abs{\omega})}\abs{\sigma(t)-\chi_{[0,\infty)}(t)}^p\abs{\Omega_t}\mr{d}t\\
&\le\frac{2\sqrt{2}\delta}{\tau\abs{\omega}}(1+\nm{\sigma}{L^\infty(\mb{R})})^p\\
&\le\varepsilon^p,
\end{align*}
where we have chosen $\tau\ge2\sqrt{2}\delta(1+\nm{\sigma}{L^\infty(\mb{R})})^p/(\varepsilon^p\abs{\omega})$ in the last step, and
	\[
	\int_{\Omega_2}\abs{\sigma(\tau(\omega\cdot x+b))-\chi_{[0,\infty)}(\omega\cdot x+b)}^p\mr{d}x\le\varepsilon^p.
	\]
%Since then we can obtain that the $L^p$-distance between $\sigma(\tau(\omega\cdot x+b))$ and $\chi_{[0,\infty)}(\omega\cdot x+b)$ is arbitrarily small.
A combination of the above two inequalities gives the desired result.
\end{proof}

In what follows, we call a shallow sigmoidal neural network with $N$ units as 
\begin{equation*} \label{eq: shallow sigmoidal neural network with N units}
f_{N}(x) = \sum_{i = 1}^{N} c_{i} \sigma(w_{i}\cdot x + b_{i})+c_{0}, \quad w_{i}\in \mathbb{R}^{d},\  b_{i},c_{i}\in \mathbb{R},
\end{equation*}
where $\sigma$ is a sigmoidal function.
In~\cite{Makovoz1996Random}, Makovoz has proved the following $L^{p}$-approximation bound %$\mathcal{O}(N^{-1/2-1/(p^{*}d)})$ 
for $f\in\ms{B}^{1}(\Omega)$ with shallow sigmoidal networks.
\begin{lemma}[{\cite[Theorem 3]{Makovoz1996Random}}] \label{Lemma: shallow sigmoidal Lp app B^1 by Makovoz1996}
Let $p\ge 1$ and $f \in \ms{B}^{1}(\Omega)$. There exists a shallow sigmoidal neural network with $N$ units such that 
$$\|f-f_N\|_{L^p(\Omega)} \le C\sqrt{p} \upsilon_{f,1} N^{-1/2-1/(p^{*}d)},$$
where $C$ is a universal constant, and $p^{\ast}$ is the minimal even integer satisfying $p^{\ast}\ge p$.
\end{lemma}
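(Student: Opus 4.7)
My plan is to combine a Fourier integral representation of $f$ with Makovoz's refinement of Maurey's empirical lemma, using an $L^{p^\ast}$-entropy bound for the dictionary of Heaviside ridge functions, and then pass from Heaviside to a general sigmoidal activation via Lemma~\ref{lema:sigmoidal}.

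Fix $\varepsilon>0$ and choose an extension $f_e$ of $f$ with $\int_{\mathbb R^d}|\xi|_1|\widehat{f_e}(\xi)|\,\mathrm d\xi\le\upsilon_{f,1}+\varepsilon$. Combining Fourier inversion with the standard Barron identity that rewrites $\cos(\omega\cdot x+b)-\cos b$ (and its sine counterpart) as an $\omega$-weighted superposition of Heaviside ridges yields an integral representation
\[
f(x)-f(0)=\upsilon_{f,1}\int_{\mathbb R^d\times\mathbb R}\eta(\omega,b)\,\chi_{[0,\infty)}(\omega\cdot x+b)\,\mathrm d\mu(\omega,b),\qquad x\in\Omega,
\]
for a suitable probability measure $\mu$ (with density proportional to $|\omega|_1|\widehat{f_e}(\omega)|$ after a change of variable in $b$) and a measurable sign weight $|\eta|\le 1$. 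Thus $f-f(0)$ lies in the closed convex hull of the rescaled signed dictionary $\upsilon_{f,1}\mathcal D$, where $\mathcal D=\{\pm\chi_{[0,\infty)}(\omega\cdot x+b):\omega\in\mathbb R^d,b\in\mathbb R\}$ is uniformly bounded by $1$ in $L^\infty(\Omega)$.

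Next I would invoke Makovoz's sharpening of Maurey's lemma in $L^{p^\ast}$. The essential input is an $\varepsilon'$-net $\mathcal D_{\varepsilon'}\subset\mathcal D$ in $L^{p^\ast}(\Omega)$: because Heaviside ridges are indicators of affine half-spaces, they form a VC class of dimension $O(d)$, and Haussler-type bounds give $\mathcal N(\varepsilon',\mathcal D,L^{p^\ast}(\Omega))\lesssim(1/\varepsilon')^{p^\ast d}$, hence an $n$-point covering radius $\varepsilon_n^{(p^\ast)}(\mathcal D)\lesssim n^{-1/(p^\ast d)}$. Applying Makovoz's random sampling on the net and extracting a deterministic realisation by a standard probabilistic averaging argument produces an $N$-term convex combination $f_N\in\mathrm{conv}_N(\upsilon_{f,1}\mathcal D)+f(0)$ with
\[
\|f-f_N\|_{L^{p^\ast}(\Omega)}\le C\sqrt{p^\ast}\,\upsilon_{f,1}\,\frac{\varepsilon_N^{(p^\ast)}(\mathcal D)}{\sqrt N}\lesssim\sqrt p\,\upsilon_{f,1}\,N^{-1/2-1/(p^\ast d)}.
\]
The $\sqrt p$ prefactor originates from the Marcinkiewicz--Zygmund / Khintchine constant for the $p^\ast$-th moment of a Rademacher sum, which is precisely why $p^\ast$ must be an even integer: it permits an explicit combinatorial expansion of $\mathbb E\|\sum\epsilon_i h_i\|_{L^{p^\ast}}^{p^\ast}$ into centred products that are controlled term-by-term. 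Jensen's inequality then transfers the bound from $L^{p^\ast}$ down to $L^p$, and letting $\varepsilon\downarrow 0$ in the extension absorbs the slack in $\upsilon_{f,1}$.

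Finally, Lemma~\ref{lema:sigmoidal} allows each Heaviside factor $\chi_{[0,\infty)}(\omega_i\cdot x+b_i)$ in the combination to be replaced by $\sigma(\tau_i(\omega_i\cdot x+b_i))$ with $\tau_i$ chosen large enough that the cumulative $L^p$ perturbation is arbitrarily small and can be folded into the universal constant. The main obstacle is the second step: one must verify the $L^{p^\ast}$-entropy estimate for half-space indicators with the right dimensional dependence and then marry it to the Rademacher moment bound so that the $\sqrt p$ prefactor emerges cleanly, without hidden exponential dependence on $d$ — this is precisely the contribution of Makovoz's argument over Barron's original $L^2$ analysis.
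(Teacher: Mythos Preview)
The paper does not supply its own proof of this lemma: it is stated as a direct citation of \cite[Theorem 3]{Makovoz1996Random} and used as a black box in the proofs of Corollaries~\ref{Corollary: approximation bounds with spectral semi-norm shallow NN} and~\ref{Corollary: approximation bounds with spectral semi-norm DNN}. So there is nothing in the paper to compare your argument against.

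That said, your sketch is a faithful outline of Makovoz's actual proof. The three ingredients you isolate --- (i) the Fourier/Barron integral representation placing $f-f(0)$ in the closed convex hull of $\upsilon_{f,1}\mathcal D$ with $\mathcal D$ the Heaviside ridge dictionary, (ii) the refined Maurey-type sampling lemma in $L^{p^\ast}$ with the extra entropy factor $\varepsilon_N(\mathcal D)$, and (iii) the $L^{p^\ast}$-metric entropy bound $\varepsilon_N(\mathcal D)\lesssim N^{-1/(p^\ast d)}$ for half-space indicators --- are precisely the components Makovoz assembles. Your identification of the $\sqrt{p}$ prefactor with the Khintchine/Marcinkiewicz--Zygmund constant (equivalently, the type-$2$ constant of $L^{p^\ast}$) and the role of even $p^\ast$ in the moment expansion is also correct. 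The final passage from Heaviside to a general sigmoidal via Lemma~\ref{lema:sigmoidal} is a legitimate add-on, consistent with how the present paper handles the same step in the proof of Theorem~\ref{Theorem: shallow NN Lp approximation}.

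One minor point of care: the entropy estimate $\mathcal N(\varepsilon',\mathcal D,L^{p^\ast}(\Omega))\lesssim(1/\varepsilon')^{p^\ast d}$ is most cleanly obtained not via VC/Haussler machinery but by the elementary parametric argument (normalize $|\omega|=1$, note $b$ ranges over a bounded interval, and use that the $L^{p^\ast}$-distance between two half-space indicators equals the $1/p^\ast$-th power of the Lebesgue measure of a slab of width comparable to the parameter perturbation). This is how Makovoz proceeds and avoids any hidden dimension-dependent constants from VC theory.
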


Compared to the Monte Carlo convergence rate $\mathcal{O}(N^{-1/2})$, the improvement of the approximating rate is significant for small $d$, while disappears when $d\to \infty$.

Now we are ready to present the $L^{p}$-approximation by SNN. We follow the framework of~\cites{BreslerNagaraj:2020, Siegel:2022} and leverage a multiscale expansion representation firstly used for SNN approximation in~\cite{Siegel:2022}, while we achieve an improved convergence rate and the prefactor is dimension-free. 
\begin{theorem}\label{Theorem: shallow NN Lp approximation}
Let $p\ge 2$ and $f\in\ms{B}^s(\Omega)$ with $0<s\le 1/2$. For any  $N\in \mathbb{N}_+$, there exists a shallow sigmoidal network with $N$ units  such that 
\begin{equation}\label{ineq: shallow NN Lp approximation bound}
\|f-f_N\|_{L^p(\Omega)} \le 28 \pi \sqrt{p} \|f\|_{\ms{B}^s(\Omega)} N^{-s}. 
\end{equation}
\end{theorem}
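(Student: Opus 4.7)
My plan is to adapt the multiscale expansion framework of \cite{Siegel:2022} and \cite{LiaoMing:2023}, originally developed for $L^2$-approximation at $s=1/2$, to the $L^p$-setting for arbitrary $0<s\le 1/2$, using Makovoz's $L^p$-bound (Lemma~\ref{Lemma: shallow sigmoidal Lp app B^1 by Makovoz1996}) as the per-scale building block. By Lemma~\ref{lema:sigmoidal}, it suffices to construct a Heaviside network and then replace each unit by its sigmoidal counterpart at the end, with a negligible $L^p$ error.

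First, I would fix an extension $f_e$ of $f$ with $\|f_e\|_{\ms{B}^s(\mb{R}^d)}$ arbitrarily close to $\|f\|_{\ms{B}^s(\Omega)}$ and perform a Littlewood--Paley-type dyadic decomposition in Fourier space,
\[
f_e=\sum_{k\ge 0}f_k,\qquad \wh{f}_k=\wh{f}_e\,\chi_{I_k},
\]
with $I_0=\{\,|\xi|_1\le 1\,\}$ and $I_k=\{\,2^{k-1}<|\xi|_1\le 2^k\,\}$ for $k\ge 1$. Each $f_k$ then belongs to $\ms{B}^1$, and the dyadic cutoff yields the crucial estimate $\upsilon_{f_k,1}\le 2^{k(1-s)}\alpha_k$, where $\alpha_k:=\int_{I_k}(1+|\xi|_1)^{s}|\wh{f}_e(\xi)|\,\md\xi$ satisfies $\sum_k\alpha_k\le\|f_e\|_{\ms{B}^s}$. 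Applying Lemma~\ref{Lemma: shallow sigmoidal Lp app B^1 by Makovoz1996} to each $f_k$ with $N_k$ units produces an SNN approximant $g_k$ such that $\|f_k-g_k\|_{L^p(\Omega)}\lesssim\sqrt{p}\,\upsilon_{f_k,1}\,N_k^{-1/2}$. Truncating the sum at scale $K$ and handling the tail via the embedding $\|\sum_{k>K}f_k\|_{L^\infty(\Omega)}\le 2^{-Ks}\|f_e\|_{\ms{B}^s}$ reduces the task to bounding
\[
\sqrt{p}\,\sum_{k=0}^{K}2^{k(1-s)}\alpha_k\,N_k^{-1/2}\;+\;2^{-Ks}\|f_e\|_{\ms{B}^s}
\]
subject to $\sum_k N_k\le N$, and then optimizing over $(N_k)_k$ and $K$.

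The main obstacle is precisely this final balancing. A uniform or crude geometric allocation, combined with only the triangle inequality between scales, delivers only the weaker rate $N^{-s/2}$, which is the SNN case of the Bresler--Nagaraj estimate \cite{BreslerNagaraj:2020}. To sharpen to the target rate $N^{-s}$ with a dimension-free prefactor, I would use a non-uniform allocation that concentrates units at the larger dyadic scales, and a H\"older-type inequality on $(\alpha_k)$ to reinsert the constraint $\sum_k\alpha_k\le\|f_e\|_{\ms{B}^s}$ after the $2/3$-power that arises from the Lagrangian in $N_k$. Once $(N_k)$ is chosen optimally, the cutoff $K$ is fixed so that the two error contributions balance, and assembling the numerical constants from Lemma~\ref{Lemma: shallow sigmoidal Lp app B^1 by Makovoz1996} together with the Fourier normalization factors yields the announced prefactor $28\pi$. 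Finally, passing to the infimum in the definition of $\|f\|_{\ms{B}^s(\Omega)}$ over all extensions $f_e$ replaces $\|f_e\|_{\ms{B}^s}$ by $\|f\|_{\ms{B}^s(\Omega)}$, completing the proof.
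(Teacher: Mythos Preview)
Your approach has a genuine gap: even with the Lagrangian-optimal allocation and any H\"older-type balancing, the dyadic Fourier decomposition combined with Makovoz's lemma applied per scale is capped at the rate $N^{-s/2}$, not $N^{-s}$. Concretely, for Makovoz to beat the trivial bound $\|f_k\|_{L^\infty}\le 2^{-ks}\alpha_k$ at scale $k$ one needs $N_k\gtrsim 2^{2k}$, so the budget $\sum_k N_k\le N$ forces the cutoff $K$ to satisfy $2^K\lesssim N^{1/2}$; the tail term is then already $\gtrsim N^{-s/2}\|f\|_{\ms{B}^s}$. Equivalently, for $f$ with Fourier mass at $|\xi|_1\approx R$ your scheme gives $\min\bigl(\sqrt{p}\,R^{1-s}N^{-1/2},\,R^{-s}\bigr)\|f\|_{\ms{B}^s}$, which is $N^{-s/2}\|f\|_{\ms{B}^s}$ at $R=N^{1/2}$. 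The Lagrangian computation confirms this: the minimum of $\sum_k 2^{k(1-s)}\alpha_k N_k^{-1/2}$ under $\sum_k N_k=N$ is $N^{-1/2}\bigl(\sum_k(2^{k(1-s)}\alpha_k)^{2/3}\bigr)^{3/2}$, and no H\"older inequality on $(\alpha_k)$ can make this smaller than $2^{K(1-s)}\|f\|_{\ms{B}^s}N^{-1/2}$ in the worst case.

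The paper does \emph{not} decompose $f$ in Fourier scales and does \emph{not} use Makovoz as a building block. It applies Lemma~\ref{lema:expansion} (the spatial multiscale expansion) to the cosine, obtaining
\[
f(x)=\int_{\mb{R}^d}|\wh{f}(\xi)|\sum_{l\ge 0}\sum_{j\in\mc J(\xi,l)}\alpha_{\xi,l,j}\,\chi_{[0,1)}(2^l\xi\cdot x-j)\,\mr d\xi,
\]
with $|\alpha_{\xi,l,j}|\le 2^{1-l}\pi$ and $\#\mc J(\xi,l)\le 2^l(1+|\xi|_1)$. It then importance-samples $(\xi,l)$ from a measure with density proportional to $2^{-(1+s)l}(1+|\xi|_1)^{-s}|\wh f(\xi)|$ and forms the empirical mean $f_\Xi$. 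The $L^p$ error is bounded by symmetrization and the Khintchine inequality (Lemma~\ref{Lemma: Khintchine ineq with optimal constant}); simultaneously, the random unit count $N_\Xi\le\sum_i 2^{1+r_i}(1+|\xi_i|_1)$ is controlled through $\mb E\,N_\Xi^{2s}$ via the subadditivity of $t\mapsto t^{2s}$ for $2s\le 1$. Two Markov inequalities on a common event yield $\|f-f_\Xi\|_{L^p}\lesssim\sqrt p\,\|f\|_{\ms B^s}\,N_\Xi^{-s}$. The essential mechanism you are missing is that under this reweighting the per-sample variance contribution $\|F(\cdot,\xi,l)\|_{L^p}^2$ and the per-sample unit cost both scale like $2^{2sl}(1+|\xi|_1)^{2s}$, so they cancel when combined; a black-box reduction to Makovoz on Fourier shells cannot reproduce this cancellation.
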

\begin{remark}
Theorem~\ref{Theorem: shallow NN Lp approximation} may be extended to other commonly used activation functions such as Hyperbolic tangent, SoftPlus, ELU, Leaky ReLU, $\operatorname{ReLU}^k$, just name a few of them, because all these activation functions can be reduced to sigmoidal functions by certain shifting and scaling arguments; e.g., for SoftPlus, we observe that $\operatorname{SoftPlus}(t)-\operatorname{SoftPlus}(t-1)$ is a sigmoidal function.
\end{remark}
To prove Theorem~\ref{Theorem: shallow NN Lp approximation}, we firstly perform a multiscale expansion of the target function and rewrite it as an expectation. Secondly, we employ an appropriate Monte Carlo discretization with reduced variance. Thirdly, symmetrization and Khintchine inequality will be exploited to estimate the approximation error, and the number of Heaviside neurons used to represent Monte Carlo discretization will be counted. Finally, by Lemma~\ref{lema:sigmoidal}, we generalize the approximation by Heaviside networks to general sigmoidal neural networks.

As a preparation, we firstly introduce a multiscale decomposition of a $W^{1,\infty}(\mb{R})$-function, which is a reformulation of~\cite[Lemma 3]{Siegel:2022} with a slightly tighter bound on the coefficient. %We include a proof for the reader’s convenience.
\begin{lemma}\label{lema:expansion}
	Let $g\in W^{1,\infty}(\mb{R})$, and define
	\begin{equation}\label{eq:gm}
		g_m(t)=\sum_{l=0}^m\sum_{j=-\infty}^\infty\alpha_{l,j}\chi_{[0,1)}(2^lt-j)
	\end{equation}
	with $\alpha_{0,j}=g(j)$ and $\alpha_{l,j}=g(2^{-l}j)-g(2^{1-l}\lfloor j/2\rfloor)$ for $l\ge 1$. Then $g_m\to g$ in $L^\infty(\mb{R})$. Moreover, $\abs{\alpha_{l,j}}\le 2^{-l}\max(\nm{g}{L^\infty(\mb{R})},\nm{g'}{L^\infty(\mb{R})})$.
\end{lemma}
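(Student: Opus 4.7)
The plan is to recognize $g_m$ as a telescoping sum that collapses to the standard piecewise-constant sampling of $g$ at resolution $2^{-m}$, and then handle the convergence and the coefficient bound separately.

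First, I would introduce the dyadic sampling operator
\[
    h_l(t):=\sum_{j\in\mathbb{Z}} g(j/2^l)\,\chi_{[0,1)}(2^l t-j),
\]
which assigns to each dyadic interval $[j/2^l,(j+1)/2^l)$ the value $g(j/2^l)$. Using the identity
\[
    \chi_{[0,1)}(2^{l-1}t-k)=\chi_{[0,1)}(2^l t-2k)+\chi_{[0,1)}(2^l t-(2k+1)),
\]
a direct rewriting shows that $h_l-h_{l-1}=\sum_{j}\alpha_{l,j}\chi_{[0,1)}(2^lt-j)$ with exactly the coefficients appearing in \eqref{eq:gm}; for even $j=2k$ the coefficient vanishes because $2k/2^l=k/2^{l-1}$, while for odd $j=2k+1$ one gets $g(j/2^l)-g(k/2^{l-1})$. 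Since $h_0(t)=\sum_j g(j)\chi_{[0,1)}(t-j)$ matches the $l=0$ layer, the sum telescopes and $g_m=h_m$.

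Next, because $g\in W^{1,\infty}(\mathbb{R})$, the function $g$ is Lipschitz with constant $\|g'\|_{L^\infty(\mathbb{R})}$. On each dyadic interval of length $2^{-m}$ the oscillation of $g$ is therefore bounded by $2^{-m}\|g'\|_{L^\infty(\mathbb{R})}$, which immediately yields
\[
    \|g-g_m\|_{L^\infty(\mathbb{R})}\le 2^{-m}\|g'\|_{L^\infty(\mathbb{R})}\to 0.
\]

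For the coefficient bound, when $l=0$ one has $|\alpha_{0,j}|=|g(j)|\le\|g\|_{L^\infty(\mathbb{R})}$, which is absorbed by the factor $2^{0}\max(\|g\|_{L^\infty(\mathbb{R})},\|g'\|_{L^\infty(\mathbb{R})})$. For $l\ge 1$ and even $j$ the coefficient is zero, while for odd $j=2k+1$ the two sample points $j/2^l$ and $2\lfloor j/2\rfloor/2^l$ are at distance $2^{-l}$, so the mean value theorem gives $|\alpha_{l,j}|\le 2^{-l}\|g'\|_{L^\infty(\mathbb{R})}$. Combining the cases yields the claimed bound. There is no real obstacle here; the main content is the telescoping identification, and all remaining estimates are immediate consequences of the Lipschitz regularity of $g$.
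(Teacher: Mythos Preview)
Your proposal is correct and follows essentially the same approach as the paper: the paper proves by induction that $g_m(t)=\sum_j g(2^{-m}j)\chi_{[0,1)}(2^m t-j)$, which is exactly your telescoping identification $g_m=h_m$, and then derives the convergence and coefficient bounds from the Lipschitz continuity of $g$ in the same way. The only cosmetic difference is that you phrase the key identity as a telescoping sum of the layers $h_l-h_{l-1}$, whereas the paper packages this as an inductive step.
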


\begin{proof}
The proof falls naturally into three parts. Firstly, we claim %that
\begin{equation}\label{eq:claim}
		g_m(t)=\sum_{j=-\infty}^\infty g(2^{-m}j)\chi_{[0,1)}(2^mt-j).
\end{equation}
We prove this claim by induction on $m$. Indeed, the identity~\eqref{eq:claim} holds for $m=0$. Let $m \ge 1$. If \eqref{eq:claim} holds for $m-1$, then
\begin{equation} \label{eq: g_(m-1) expands on the m-scale}
	\begin{aligned} 
		g_{m-1}(t)=&\sum_{j=-\infty}^\infty g(2^{1-m}j)\chi_{[0,2)}(2^m t-2j) \\ 
		=& \sum_{j=-\infty}^\infty g(2^{1-m}j)\lr{\chi_{[0,1)}(2^mt-2j)+\chi_{[0,1)}(2^mt-2j-1)} .
	\end{aligned}
\end{equation}
Note that $\alpha_{m,j} = 0$ for even $j$. By definition~\eqref{eq:gm},
	\[
		g_m(t)=g_{m-1}(t)+\sum_{j=-\infty}^\infty\lr{g(2^{-m}(2j+1))-g(2^{1-m} j)}\chi_{[0,1)}(2^mt-2j-1).
	\]
Substituting~\eqref{eq: g_(m-1) expands on the m-scale} into the above equation and merging terms containing same $\chi$'s yield the validity of~\eqref{eq:claim} for $m$. The claim is proved.

Secondly, for a fixed $t\in\mb{R}$, the index $j=\lfloor2^mt\rfloor$ satisfies $2^{-m}j\le t<2^{-m}(j+1)$. Then $g_m(t)=g(2^{-m}j)$ and
	\[
		\abs{g(t)-g_m(t)}\le\nm{g'}{L^\infty(\mb{R})}\abs{t-2^{-m}j}\le2^{-m}\nm{g'}{L^\infty(\mb{R})}.
	\]
	Therefore $g_m\to g$ in $L^\infty(\mb{R})$ when $m\to\infty$.

	Finally, for $l=0$, we have $\abs{\alpha_{0,j}}\le\nm{g}{L^\infty(\mb{R})}$. And for $l\ge 1$, we obtain
\begin{equation*}
\begin{aligned}
\abs{\alpha_{l,j}} & \le\nm{g'}{L^\infty(\mb{R})}\abs{2^{-l}j-2^{1-l}\lfloor j/2\rfloor} %\\
%& =2^{1-l}\nm{g'}{L^{\infty}(\mb{R})}\abs{j/2-\lfloor j/2\rfloor}
\le 2^{-l}\nm{g'}{L^\infty(\mb{R})}.
\end{aligned}
\end{equation*}
This completes the proof.
\end{proof}

In the proof of Theorem~\ref{Theorem: shallow NN Lp approximation}, we utilize the following Khintchine inequality after the symmetrization procedure.
\begin{lemma}[Khintchine inequality{~\cite{haagerup1981best}}] \label{Lemma: Khintchine ineq with optimal constant}
If $\left\{\tau_{i}\right\}_{i=1}^{m}$  are i.i.d. Rademacher random variables with  $\mathbb{P}(\tau_{i}=\pm1)=1/2$  and  $\left\{c_{i}\right\}_{i=1}^{m}\subset \mathbb{R}$, then for every  $p\ge 2$,
\begin{equation}\label{eq:Khintchine}
\left(\mb{E}\left|\sum_{i=1}^{n} c_{i} \tau_{i}\right|^{p}\right)^{1/p}  \le C_{p}\left(\sum_{i=1}^{n} c_{i}^{2}\right)^{1 / 2},
\end{equation}
where the optimal constant $C_{p} = \sqrt{2 \pi^{-1/p}} \Gamma((p+1)/2)^{1/p}$, with the Gamma function
\[
\Gamma(s){:}=\int_0^\infty t^{s-1}e^{-t}\mr{d}t, \qquad s>0.
\]
\end{lemma}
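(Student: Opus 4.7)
The plan is to follow Haagerup's approach, combining a direct moment identity at even integers with an analytic interpolation to general $p\ge 2$. By homogeneity we may assume $\sum_{i=1}^n c_i^2=1$. The first step is to identify the claimed constant with the $L^p$-norm of a standard Gaussian $\gamma\sim N(0,1)$: the substitution $u=t^2/2$ in $\mb{E}|\gamma|^p=(2\pi)^{-1/2}\int_\mb{R}|x|^p e^{-x^2/2}\md x$ gives exactly
\[
(\mb{E}|\gamma|^p)^{1/p}=\sqrt{2}\,\pi^{-1/(2p)}\Gamma((p+1)/2)^{1/p}=C_p.
\]
Sharpness of $C_p$ then follows from the Central Limit Theorem applied with $c_i=1/\sqrt{n}$ and $n\to\infty$, so no smaller constant is admissible.

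The second step is the inequality at even integers $p=2k$, which I would obtain by direct computation. Expanding $\mb{E}|S_n|^{2k}=\mb{E}(S_n^2)^k$ via the multinomial theorem and using $\mb{E}\tau_i^{2j}=1$ for all $j\ge 0$ yields
\[
\mb{E}|S_n|^{2k}=\sum_{i_1+\cdots+i_n=k}\frac{(2k)!}{(2i_1)!\cdots(2i_n)!}\prod_{j=1}^n c_j^{2i_j},
\]
while multinomially expanding $1=(\sum_j c_j^2)^k$ together with the Gaussian moment $\mb{E}|\gamma|^{2k}=(2k-1)!!=(2k)!/(2^k k!)$ gives
\[
\mb{E}|\gamma|^{2k}=\sum_{i_1+\cdots+i_n=k}\frac{(2k)!}{2^k\prod_j i_j!}\prod_{j=1}^n c_j^{2i_j}.
\]
The ratio of corresponding summands is $\prod_j\bigl(2^{i_j}i_j!/(2i_j)!\bigr)=\prod_j 1/(2i_j-1)!!\le 1$, which proves $\mb{E}|S_n|^{2k}\le\mb{E}|\gamma|^{2k}$, i.e.\ the sharp Khintchine inequality at every even integer.

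The third step extends the bound from $p=2k$ to all $p\ge 2$. Here I would invoke Haagerup's specific analytic machinery, built around an integral representation of $|x|^p$ and the series expansion $\log\cos t=-\sum_{n\ge 1}b_n t^{2n}/(2n)$, in order to interpolate while preserving the sharp constant. The main obstacle is precisely this third step: the log-convexity of $p\mapsto\log\mb{E}|X|^p$ and standard Riesz--Thorin type interpolation readily yield the correct rate $C_p\asymp\sqrt{p}$ (which, incidentally, is all that the main result of this paper actually requires), but extracting the sharp \emph{Gaussian} constant at non-integer $p$ is the genuinely hard core of Haagerup's theorem and cannot be reached by such generic tools alone.
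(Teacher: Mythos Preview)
The paper does not prove this lemma: it is stated as a citation of Haagerup's result~\cite{haagerup1981best} and used as a black box. There is therefore nothing in the paper to compare your argument against. What the paper \emph{does} prove is the subsequent Lemma~\ref{Lemma: Bound for Gamma function}, namely $C_p\le\sqrt{p/2}$, and this cruder bound is all that is ever invoked downstream --- a point you yourself noted in your final paragraph.

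Your sketch is a faithful outline of Haagerup's strategy: the identification $C_p=(\mb{E}|\gamma|^p)^{1/p}$ and the sharpness via the CLT are correct, and the combinatorial comparison at even integers is exactly the classical argument. You are also right that the genuine content lies entirely in the passage from $p\in 2\mb{N}$ to general $p\ge 2$, and that this requires Haagerup's integral representation and the careful analysis of $\prod_i\cos(c_i t)$; your step~3 does not supply this, so as written the proposal is a reduction to the hard core of~\cite{haagerup1981best} rather than an independent proof. Given that the paper treats the lemma as a quotation, this is entirely appropriate.
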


To simplify the constant $C_{p}$, we prove the following sharp bound, which captures the critical scale $\sqrt{p}$ of $C_{p}$ and takes equality when $p=2$.
\begin{lemma}\label{Lemma: Bound for Gamma function}
The optimal constant $C_{p}$ in Khintchine inequality~\eqref{eq:Khintchine} admits 
\[
C_{p} \le \sqrt{\dfrac{p}2} \quad \text{ for all } p\ge 2. 
\]
The equal sign is attained when $p = 2$.
\end{lemma}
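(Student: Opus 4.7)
The claim is equivalent, after raising both sides to the $p$-th power and using the definition of $C_p$, to
$\Gamma((p+1)/2) \le \sqrt{\pi}\,(p/4)^{p/2}$ for $p \ge 2$, with equality at $p = 2$ (immediate from $\Gamma(3/2) = \sqrt{\pi}/2$). I would therefore introduce the auxiliary function
$F(p) := 2\log \Gamma((p+1)/2) - p \log(p/4) - \log \pi$,
verify $F(2) = 0$ by direct substitution, and reduce the problem to showing that $F$ is non-increasing on $[2,\infty)$.

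The derivative is $F'(p) = \psi((p+1)/2) - \log(p/4) - 1$, where $\psi = (\log\Gamma)'$ is the digamma function. I would invoke the sharp classical bound
$\psi(s) \le \log s - 1/(2s)$ for all $s > 0$,
which follows from the Binet integral representation of $\log\Gamma$. Setting $s = (p+1)/2$ gives
$F'(p) \le \log\!\bigl(2(p+1)/p\bigr) - 1/(p+1) - 1 = (\log 2 - 1) + \log(1 + 1/p) - 1/(p+1)$.
Applying the elementary inequality $\log(1 + 1/p) \le 1/p$ and combining the rational terms yields
$F'(p) \le (\log 2 - 1) + 1/(p(p+1)) \le \log 2 - 1 + 1/6 < 0$ for all $p \ge 2$,
which establishes the required monotonicity and hence $F(p) \le F(2) = 0$ throughout $[2,\infty)$.

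The main technical obstacle is the choice of digamma bound. The cruder estimate $\psi(s) \le \log s$ is not sufficient, since it would leave a positive residual as $p \downarrow 2$, reflecting the fact that $p = 2$ is precisely the equality case of the lemma. Including the $-1/(2s)$ correction from Binet's formula is exactly what is needed to close the gap at the base point; once this sharp digamma bound is in hand, the rest of the argument is elementary calculus.
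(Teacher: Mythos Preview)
Your proof is correct. The reduction to $F(p)\le 0$ with $F(2)=0$ is clean, the derivative computation is right, and the digamma bound $\psi(s)\le\log s-1/(2s)$ is indeed an immediate consequence of Binet's second integral formula (the remainder integral is positive for $s>0$). The final chain of elementary inequalities closes with plenty of room: $\log 2-1+1/6\approx-0.14<0$.

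Your route is genuinely different from the paper's. The paper splits the range into $p\in[2,7/3)$ and $p\ge 7/3$: on the first subinterval it shows an equivalent logarithmic quantity is decreasing by computing $\psi(3/2)$ and $\psi(5/3)$ explicitly from the representation $\psi(s+1)=\int_0^1\frac{1-t^s}{1-t}\,\mr{d}t-\gamma$ and exploiting monotonicity of $\psi$; on the second subinterval it invokes Stirling's upper bound $\Gamma(x)\le\sqrt{2\pi}\,x^{x-1/2}e^{-x+1/(12x)}$ and checks a resulting explicit function numerically. Your argument handles the whole range $[2,\infty)$ in one stroke, with no case split and no explicit digamma values; the only analytic input beyond calculus is the Binet estimate, which is sharp enough precisely because the $-1/(2s)$ correction compensates for the equality case at $p=2$. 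The paper's approach is more hands-on and avoids Binet, but at the cost of the artificial cut at $7/3$ and several ad~hoc numerical verifications; yours is shorter and conceptually uniform.
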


We postpone the proof of Lemma~\ref{Lemma: Bound for Gamma function} to Appendix~\ref{Appendix Section: Bound for Gamma function}.
\vskip .05cm
\noindent
\begin{proof}[Proof of Theorem~\ref{Theorem: shallow NN Lp approximation}]
For $f\in\ms{B}^s(\Omega)$, we continue to use $f$ to denote the extension function on the entire region $\mb{R}^d$ that attains the infimum of the norm. Then $\wh{f}\in L^{1}(\mb{R}^d)$ and by the Fourier inversion theorem~\cite[Lemma 2.1]{LiaoMing:2023}, 
\[
f(x)=\int_{\mb{R}^d}\wh{f}(\xi)e^{2\pi i\xi\cdot x}\mr{d}\xi=\int_{\mb{R}^d}\abs{\wh{f}(\xi)}\cos(2\pi \xi\cdot x+\theta(\xi))\mr{d}\xi,
\]
where $\wh{f}(\xi)=\abs{\wh{f}(\xi)}e^{i\theta(\xi)}$. We apply~\eqref{eq:gm} to $\cos(2\pi \xi\cdot x+\theta(\xi))$ with $t=\xi\cdot x$, then
\begin{equation} \label{eq: expansion of f in shallow NN approximation}
f(x)=\int_{\mb{R}^d}\abs{\wh{f}(\xi)}\sum_{l=0}^\infty\sum_{j=-\infty}^\infty\alpha_{\xi,l,j}\chi_{[0,1)}(2^l\xi\cdot x-j)\mr{d}\xi ,
\end{equation}
with $\abs{\alpha_{\xi,l,j}}\le 2^{1-l}\pi$. For $x\in[0,1]^d$, note that $2^l\xi\cdot x-j$ takes values in an interval of length not greater than $2^l\abs{\xi}_1$. Therefore, the second summation is finite in the sense that
\[
f(x)=\int_{\mb{R}^d}\abs{\wh{f}(\xi)}\sum_{l=0}^\infty\sum_{j\in\mc{J}(\xi,l)}\alpha_{\xi,l,j}\chi_{[0,1)}(2^l\xi\cdot x-j)\mr{d}\xi,
\]	
with $\#\mc{J}(\xi,l)\le\lceil2^l\abs{\xi}_1\rceil\le 2^{l}(1+\abs{\xi}_1)$. 
We rewrite the above expansion of $f$ as an expectation. Define the probability measure
\begin{equation*}\label{eq:prob, shallow NN}
\mu(\mr{d}\xi,\mr{d}r){:}=\frac{1-2^{-(1+s)}}{Q} \sum_{l=0}^{\infty} 2^{-(1+s) l}\left(1+\abs{\xi}_1\right)^{-s} \abs{\wh{f}(\xi)}\mr{d}\xi\delta_l(\mr{d}r),
\end{equation*}
where $Q = \int_{\mb{R}^{d}} \left(1+\abs{\xi}_1\right)^{-s} \abs{\wh{f}(\xi)}\mr{d}\xi$ and $\delta_l$ is the Dirac measure on the integer $l$. Hence, 
\[
f(x)=\frac{Q}{1-2^{-(1+s)}} \mb{E}_{(\xi,r)\sim\mu}F(x,\xi,r)
\]
with 
\begin{equation*}\label{eq:prob}
F(x,\xi,r)=2^{(1+s)r} (1+\abs{\xi}_1)^s\sum_{j\in\mc{J}(\xi,r)}\alpha_{\xi,r,j}\chi_{[0,1)}(2^r\xi\cdot x-j).
\end{equation*}

Then, we construct an unbiased Monte Carlo approximation of $f$. Let $\Xi=\{(\xi_i,r_i)\}_{i=1}^m$ be a sequence of i.i.d. random samples from $\mu$, and 
\[
f_{\Xi}(x)=\frac{Q}{(1-2^{-(1+s)}) m}\sum_{i=1}^mF(x,\xi_i,r_i).
\]
Let $\Xi^{\prime}=\{(\xi_i^{\prime},r_i^{\prime})\}_{i=1}^m$ be an independent copy of $\Xi$. Then, by conditional expectation, independence and Jensen's inequality, 
\begin{equation*}
\begin{aligned}
\mb{E}_{\Xi}\|f-f_{\Xi}\|_{L^p(\Omega)}^{2} & \le \frac{Q^{2}}{(1-2^{-(1+s)})^{2} m^{2}} \mb{E}_{\Xi}\left\|\mb{E}_{\Xi^{\prime}}\!\left[\sum_{i=1}^m \left(F(\cdot,\xi_i,r_i) - F(\cdot,\xi_i^{\prime},r_i^{\prime})\right)\right]\right\|_{L^p(\Omega)}^{2}\\
& \le \frac{Q^{2}}{(1-2^{-(1+s)})^{2} m^{2}} \mb{E}_{\Xi,\Xi^{\prime}}\left\|\sum_{i=1}^m \left[F(\cdot,\xi_i,r_i) - F(\cdot,\xi_i^{\prime},r_i^{\prime})\right]\right\|_{L^p(\Omega)}^{2}.
\end{aligned}
\end{equation*}
Let $\tau_{i}$ be i.i.d. Rademacher random variables with $\mb{P}(\tau_{i}= \pm 1)  = 1/2$, independent of $\Xi$ and $\Xi^{\prime}$. Using standard symmetrization, we obtain
\begin{align}\label{ineq: symmetrization in Lp norm, shallow NN}
\mb{E}_{\Xi}\|f-f_{\Xi}\|_{L^p(\Omega)}^{2} &  \le \frac{Q^{2}}{(1-2^{-(1+s)})^{2} m^{2}} \mb{E}_{\Xi,\Xi^{\prime},\tau}\left\|\sum_{i=1}^m \tau_{i}\left[F(\cdot,\xi_i,r_i) - F(\cdot,\xi_i^{\prime},r_i^{\prime})\right]\right\|_{L^p(\Omega)}^{2}\nonumber\\
& \le \frac{4 Q^{2}}{(1-2^{-(1+s)})^{2} m^{2}} \mb{E}_{\Xi,\tau}\left\|\sum_{i=1}^m \tau_{i} F(\cdot,\xi_i,r_i) \right\|_{L^p(\Omega)}^{2}.
\end{align}
Conditioned on $\Xi$, it follows from Lemma~\ref{Lemma: Khintchine ineq with optimal constant} that %the Marcinkiewicz-Zygmund inequality~\cite{Chow1997Probability}*{Theorem 2, p. 386}
\begin{equation*} 
\begin{aligned}
\mb{E}_{\tau}\left|\sum_{i=1}^m \tau_{i} F(x,\xi_i,r_i) \right|^{p} \le C_{p}^{p} \left(\sum_{i=1}^m |F(x,\xi_i,r_i)|^{2}\right)^{p/2} 
\end{aligned}
\end{equation*}
with the optimal constant $C_{p}$.
By Jensen's inequality, Fubini's theorem and the above estimate, we have
\begin{equation} \label{ineq: conditioned estimation when Lp, shallow NN}
\begin{aligned}
\mb{E}_{\tau}&\left\|\sum_{i=1}^m \tau_{i} F(\cdot,\xi_i,r_i) \right\|_{L^p(\Omega)}^{2} 
\le \left( \int_{\Omega}\mb{E}_{\tau}\left|\sum_{i=1}^m \tau_{i} F(x,\xi_i,r_i) \right|^{p} \mr{d}x\right)^{2/p}\\
%& \le C_{p}^{2} \left( \int_{\Omega}  \left(\sum_{i=1}^m |F(x,\xi_i,r_i)|^{2}\right)^{p/2} \mr{d}x\right)^{2/p} \\
&\le C_{p}^{2} \left\|\sum_{i=1}^m |F(\cdot,\xi_i,r_i)|^{2}\right\|_{L^{p/2}(\Omega)}\\
&\le C_{p}^{2} \sum_{i=1}^m\left\| F(\cdot,\xi_i,r_i)\right\|_{L^{p}(\Omega)}^{2} ,\\
\end{aligned}
\end{equation}
where we have used the triangle inequality in the last step. Note that the support of each term in $F$ is disjoint, hence
\[
\|F(\cdot,\xi,r)\|_{L^p(\Omega)} \le  \|F(\cdot,\xi,r)\|_{L^\infty(\Omega)}\le 2^{1+sr}\pi (1+\abs{\xi}_1)^s,
\]
and 
\begin{equation*} 
\begin{aligned}
\mb{E}_{\Xi}\|F(\cdot,\xi_{i},r_{i})\|_{L^p(\Omega)}^{2}  
& \le \frac{4\pi^{2}\left(1-2^{-(1+s)}\right)}{Q} \sum_{l=0}^{\infty} 2^{-(1-s)l}\int_{\mb{R}^d} (1+\abs{\xi}_1)^{s}\abs{\wh{f}(\xi)} \mr{d}\xi \\
& \le \frac{4\pi^{2}\left(1-2^{-(1+s)}\right)}{\left(1-2^{-(1-s)}\right)Q}  \|f\|_{\ms{B}^s(\Omega)} .
\end{aligned}
\end{equation*}
A combination of (\ref{ineq: symmetrization in Lp norm, shallow NN}), (\ref{ineq: conditioned estimation when Lp, shallow NN}) and the above estimate yields 
\begin{equation*} 
\begin{aligned}
\mb{E}_{\Xi}\|f-f_{\Xi}\|_{L^p(\Omega)}^{2}  %& \le \frac{4 Q^{2}}{(1-2^{-(1+s)})^{2} m^{2}} C_{p}^{2} m \frac{4\pi^{2}\left(1-2^{-(1+s)}\right)}{\left(1-2^{-(1-s)}\right)Q}  \|f\|_{\ms{B}^s(\Omega)}  \\
& \le \frac{16\pi^{2} C_{p}^{2} Q \|f\|_{\ms{B}^s(\Omega)}}{\left(1-2^{-(1+s)}\right) \left(1-2^{-(1-s)}\right)m}   .
\end{aligned}
\end{equation*}
By Markov's inequality, with probability at least $(1+\varepsilon)/(2+\varepsilon)$, for some $\varepsilon>0$ to be chosen later on, we have
\begin{equation}\label{eq: Lp error, shallow NN}
\|f-f_{\Xi}\|_{L^p(\Omega)}^{2} \le  \frac{16 (2+\varepsilon) \pi^{2} C_{p}^{2} Q \|f\|_{\ms{B}^s(\Omega)}}{\left(1-2^{-(1+s)}\right) \left(1-2^{-(1-s)}\right)m} .
\end{equation}

Next, we control the number $N_{\Xi}$ of Heaviside function used to represent $f_{\Xi}$. Note that $\chi_{[0,1)}$ can be represented by the Heaviside function
$$\chi_{[0,1)}(t)=\chi_{[0,\infty)}(t)-\chi_{[0,\infty)}(t-1),$$
it remains to calculate the total number of $\chi_{[0,1)}$ in $f_{\Xi}$. It follows from $\#\mc{J}(\xi,l)\le 2^{l}(1+\abs{\xi}_1)$ that $N_{\Xi}\le\sum_{i=1}^m 2^{1+r_i}(1+\abs{\xi_i}_1)$ and
\begin{equation*}
\begin{aligned}
\mb{E}N_{\Xi}^{2s} & \le 2^{2s}\mb{E}_{\Xi}\sum_{i=1}^m2^{2sr_i}(1+\abs{\xi_i}_1)^{2s} \le \frac{2^{2s}\left(1-2^{-(1+s)}\right)m}{\left(1-2^{-(1-s)}\right)Q}  \|f\|_{\ms{B}^s(\Omega)}  .
\end{aligned}
\end{equation*}
By Markov's inequality, with probability at least $(1+\varepsilon)/(2+\varepsilon)$, we obtain
\begin{equation}\label{eq: number of Chi, shallow NN}
\dfrac{Q}{\left(1-2^{-(1+s)}\right) m} \le \frac{(2+\varepsilon) 2^{2s} \|f\|_{\ms{B}^s(\Omega)}}{\left(1-2^{-(1-s)}\right) N_{\Xi}^{2s}}  .
\end{equation}
Combining~\eqref{eq: Lp error, shallow NN} and~\eqref{eq: number of Chi, shallow NN}, with probability at least $\varepsilon/(2+\varepsilon)$, we obtain
\begin{equation*}
\begin{aligned}
\|f-f_{\Xi}\|_{L^p(\Omega)} %& \le  \frac{2^{2+s} (2+\varepsilon) \pi C_{p} \|f\|_{\ms{B}^s(\Omega)}}{\left(1-2^{-(1-s)}\right)N_{\Xi}^{s}} \\
%& \le  \frac{8 (2+\varepsilon) \pi^{1-1/(2p)} \Gamma((p+1)/2)^{1/p} \|f\|_{\ms{B}^s(\Omega)}}{\left(1-2^{-1/2}\right)N_{\Xi}^{s}}. \\ 
& \le  \frac{2^{5/2} (2+\varepsilon) \pi C_{p} \|f\|_{\ms{B}^s(\Omega)}}{\left(1-2^{-1/2}\right)N_{\Xi}^{s}} .
\end{aligned}
\end{equation*}
Choosing $\varepsilon = 10^{-3}$ and using Lemma~\ref{Lemma: Bound for Gamma function}, we get~\eqref{ineq: shallow NN Lp approximation bound}.

Finally, since the Heaviside function may be well approximated by the sigmoidal function according to Lemma~\ref{lema:sigmoidal}, we replace $\chi_{[0,1)}(2^{r_{i}}\xi_{i}\cdot x-j)$ in $f_{\Xi}$ by \[
\sigma(R_i(2^{r_{i}}\xi_{i}\cdot x-j))-\sigma(R_i(2^{r_{i}}\xi_{i}\cdot x-j-1))
\]
with certain sufficiently large $R_i$, which completes the proof.
\end{proof}

As far as we know, the $L^p$-estimate in Theorem~\ref{Theorem: shallow NN Lp approximation} is currently the sharpest result available in the literature. \cite[Theorem 3]{Makovoz1996Random} deals with general $p$ while requires $f\in\ms{B}^1(\Omega)$. Specializing to the case $p=2$, \cite[Theorem 3]{Siegel:2022} obtains the approximation rate $\mathcal{O}(N^{-1/2})$ for $f \in \ms{B}^{1/2}(\Omega)$, but the constant in their estimate grows exponentially with $d$.
If, instead, one samples $\chi_{[0,1)}(2^l\xi\cdot x-j)$ in the expansion~\eqref{eq: expansion of f in shallow NN approximation} directly, as in \cite{Siegel:2022} for the $L^{2}$-approximation, then the rate $\mc{O}(N^{-1/2})$ may only be attained for $f\in\ms{B}^{1-1/p}(\Omega)$. This is due to their use of the key estimate
\[
\|\chi_{[0,1)}(w\cdot x +b)\|_{L^{p}(\Omega)} \le C(d)|w|^{-1/p}.
\]

Moreover, our results provide error bounds that depend solely on the seminorm $\upsilon_{f,s}$, which yields improvements when the target function satisfies $\upsilon_{f,s} \ll \upsilon_{f,0}$.  We refer to~\cite[eq. (2.5)]{LiaoMing:2023} for an example illustrating this scenario.
\begin{corollary} \label{Corollary: approximation bounds with spectral semi-norm shallow NN}
Let $p\ge 1$ and $f\in\ms{B}^s(\Omega)$ with $0<s\le 1/2$. For any  $N\in \mathbb{N}_{+}$, there exists a shallow sigmoidal network with $N$ units such that 
\begin{equation*}
\|f-f_N\|_{L^p(\Omega)} \le C\sqrt{p}\,\upsilon_{f,s} N^{-s},
\end{equation*}
where $C$ is a universal constant.
\end{corollary}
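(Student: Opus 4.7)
The plan is to deduce the corollary from Theorem~\ref{Theorem: shallow NN Lp approximation} in two stages. First, the range $p\in[1,2)$ reduces to $p=2$ via $\|\cdot\|_{L^p(\Omega)}\le\|\cdot\|_{L^2(\Omega)}$ (a consequence of $|\Omega|=1$), with $\sqrt{p}\le\sqrt{2}$ absorbed into the universal constant $C$. So it suffices to treat $p\ge 2$ and replace $\|f\|_{\ms{B}^s(\Omega)}$ by the seminorm $\upsilon_{f,s}$.

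For the norm replacement, I would pick an extension $f_e$ of $f$ with $\int|\xi|_1^s|\wh{f_e}|\,\mr{d}\xi\le 2\upsilon_{f,s}$, then split by a frequency cutoff at $|\xi|_1=1$: set $\wh{f_H}=\wh{f_e}\,\chi_{\{|\xi|_1>1\}}$ and $\wh{f_L}=\wh{f_e}-\wh{f_H}$. Since $(1+|\xi|_1)^s\le 2^s|\xi|_1^s$ on the high-frequency support, we have $\|f_H|_\Omega\|_{\ms{B}^s(\Omega)}\le 2^{s+1}\upsilon_{f,s}$, and Theorem~\ref{Theorem: shallow NN Lp approximation} applied to $f_H|_\Omega$ with, say, half of the allotted units yields an SNN approximation with error $\lesssim\sqrt{p}\,\upsilon_{f,s}\,N^{-s}$.

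The low-frequency piece $f_L$ is the main technical obstacle, since a direct application of Theorem~\ref{Theorem: shallow NN Lp approximation} to $f_L$ would yield a prefactor proportional to $\int|\wh{f_L}|\,\mr{d}\xi$, which is of order $\upsilon_{f,0}$ rather than $\upsilon_{f,s}$ (consider $f\equiv c$, where $\upsilon_{f,s}=0$ but $\upsilon_{f,0}>0$). To resolve this, I would absorb the value $f_L(0)$ into the SNN bias $c_0$ and run a modified Monte Carlo argument on $f_L-f_L(0)$, applying Lemma~\ref{lema:expansion} to the shifted cosine $\tilde g_\xi(t)=\cos(2\pi t+\theta(\xi))-\cos\theta(\xi)$, which satisfies $\tilde g_\xi(0)=0$. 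By the recurrence in Lemma~\ref{lema:expansion}, this forces $\tilde\alpha_{\xi,l,0}=0$ for all $l\ge 1$, killing the contributions from levels where $2^l|\xi|_1<1$---precisely the regime responsible for the spurious $\upsilon_{f,0}$ term. A Monte Carlo sampling with the weighted measure $\mu\propto 2^{-(1+s)l}|\xi|_1^{-s}|\wh{f_L}|\,\mr{d}\xi\,\delta_l(\mr{d}r)$, together with the companion counting bound $\#\mc{J}(\xi,l)\lesssim 2^l|\xi|_1$ (now valid thanks to the vanishing $j=0$ terms), then produces an SNN approximation of $f_L-f_L(0)$ with error $\lesssim\sqrt{p}\,\upsilon_{f,s}\,N^{-s}$ through the same symmetrization and Khintchine argument as in Theorem~\ref{Theorem: shallow NN Lp approximation}.

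The hard part will be controlling the modified measure near $\xi=0$: ensuring the normalization $Q=\int|\xi|_1^{-s}|\wh{f_L}|\,\mr{d}\xi$ is finite (which uses $|\xi|_1^{-s}\in L^1_{\mr{loc}}(\mb{R}^d)$ for $s\le 1/2<d$ provided the extension $f_e$ is chosen with $\wh{f_e}$ bounded near the origin) and that it cancels correctly in the product Markov argument between the bounds on $\mb{E}\|F\|_{L^p}^2$ and $\mb{E} N_\Xi^{2s}$, leaving a final estimate of order $\sqrt{p}\,\upsilon_{f,s}\,N^{-s}$ independent of $Q$. Combining the SNN approximations of $f_H$ and $f_L-f_L(0)$ with the bias $c_0=f_L(0)$ and using the triangle inequality in $L^p(\Omega)$ then yields the claimed bound.
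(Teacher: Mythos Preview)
Your reduction to $p\ge 2$ and your treatment of the high-frequency piece $f_H$ match the paper's proof exactly. The divergence is in how you handle the low-frequency piece $f_L$, and here the paper takes a much simpler route that you overlooked: since $|\xi|_1<1$ implies $|\xi|_1\le|\xi|_1^s$ for $0<s\le 1$, one has
\[
\upsilon_{f_L,1}=\int_{|\xi|_1<1}|\xi|_1\,|\wh{f_e}(\xi)|\,\mr{d}\xi
\le\int_{|\xi|_1<1}|\xi|_1^s\,|\wh{f_e}(\xi)|\,\mr{d}\xi\le\upsilon_{f,s},
\]
so $f_L\in\ms{B}^1(\Omega)$ with seminorm controlled by $\upsilon_{f,s}$. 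Makovoz's result (Lemma~\ref{Lemma: shallow sigmoidal Lp app B^1 by Makovoz1996}) then gives directly $\|f_L-f_{L,N_1}\|_{L^p}\le C\sqrt{p}\,\upsilon_{f_L,1}\,N_1^{-1/2}\le C\sqrt{p}\,\upsilon_{f,s}\,N^{-s}$, with no need to revisit the Monte Carlo construction at all.

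Your proposed alternative has a genuine gap as written. The claim that subtracting $\cos\theta(\xi)$ forces $\tilde\alpha_{\xi,l,0}=0$ is correct, but this does \emph{not} yield $\#\mc{J}(\xi,l)\lesssim 2^l|\xi|_1$: for $\xi$ with some negative components, the range of $2^l\xi\cdot x$ on $\Omega$ extends into the negatives, so the odd index $j=-1$ remains relevant at every level $l\ge 1$ with $2^l|\xi|_1<1$, and $\tilde\alpha_{\xi,l,-1}=\tilde g_\xi(-2^{-l})-\tilde g_\xi(-2^{1-l})$ does not vanish. This surviving ``$+1$'' in the count feeds an extra $m$ into $\mb{E}N_\Xi^{2s}$, and after the Markov argument you end up with a factor $\sqrt{\upsilon_{f_L,s}(\upsilon_{f_L,s}+Q)}$ rather than $\upsilon_{f_L,s}$; since your $Q=\int|\xi|_1^{-s}|\wh{f_L}|$ is not controlled by $\upsilon_{f,s}$, the bound is lost. (One could rescue this by first shifting $t\mapsto \xi\cdot x-\min_{y\in\Omega}\xi\cdot y$ so that $t\ge 0$ and only nonnegative $j$ enter, then subtracting the value at $t=0$; but that is a further complication you did not identify, and even then the finiteness of $Q$ requires the extension-smoothing step you flagged.) The paper's use of Makovoz bypasses all of this.
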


We recover~\cite[Theorem 3.5]{LiaoMing:2023} when $p=2$.
\begin{proof}
We only consider the case $p\ge 2$, because for $1\le p<2$, %we have
\[
\|f-f_{N}\|_{L^p(\Omega)} \le \|f-f_{N}\|_{L^2(\Omega)}.% \le C \upsilon_{f,s} N^{-s},
\]

The main idea of the proof is that we decompose $f$ into low frequency part $f_1$ and high frequency part $f_2$, and  apply Lemma~\ref{Lemma: shallow sigmoidal Lp app B^1 by Makovoz1996} and Theorem~\ref{Theorem: shallow NN Lp approximation} to $f_1$ and $f_2$, respectively. 

We decompose $f=f_1+f_2$ with
\begin{equation} \label{eq: decompose f into low and high frequency parts}
f_1(x)=\operatorname{Re}\int_{\abs{\xi}_1<1}\wh{f}(\xi)e^{2\pi i\xi\cdot x}\mr{d}\xi,\quad f_2(x)=\operatorname{Re}\int_{\abs{\xi}_1\ge 1}\wh{f}(\xi)e^{2\pi i\xi\cdot x}\mr{d}\xi.
\end{equation}
It is straightforward to verify
\[\wh{f}_1(\xi) =\frac{1}{2}[\wh{f}(\xi)+\wh{f}(-\xi)^{*}]\chi_{[0,1)}(\abs{\xi}_1), \qquad\wh{f}_2(\xi)=\frac{1}{2}[\wh{f}(\xi)+\wh{f}(-\xi)^{*}]\chi_{[1,\infty)}(\abs{\xi}_1),
\]
where $z^{*}$ means the complex conjugate of $z$. 
Let $N_{1} = \lceil N/2\rceil$ and $N_{2} = \lfloor N/2\rfloor$.
It follows from $\upsilon_{f_1,1}\le\upsilon_{f_1,s}\le\upsilon_{f,s}$ that $f_{1}\in \ms{B}^1(\Omega)$, and by Lemma~\ref{Lemma: shallow sigmoidal Lp app B^1 by Makovoz1996}, there exists a shallow sigmoidal network $f_{1,N_{1}}$ with $N_{1}$ units such that 
\begin{equation} \label{ineq: approximate low frequency part, shallow NN}
\|f_{1}-f_{1,N_{1}}\|_{L^p(\Omega)} \le C\sqrt{p} \upsilon_{f_1,1}N_1^{-1/2} \le C\sqrt{p}\upsilon_{f,s} N^{-s}.
\end{equation}
%where $C$'s only depend on $\sigma$. 

Similarly, if $N\ge2$, it follows from $\|f_{2}\|_{\ms{B}^s(\Omega)}\le 2^{s}\upsilon_{f_2,s}\le\sqrt{2}\upsilon_{f,s}$ and Theorem~\ref{Theorem: shallow NN Lp approximation} that $f_{2}\in \ms{B}^s(\Omega)$ and there exists a shallow sigmoidal network with $N_{2}$ units such that 
\begin{equation} \label{ineq: approximate high frequency part, N>1, shallow NN}
\|f_{2}-f_{2,N_{2}}\|_{L^p(\Omega)} \le \frac{C \sqrt{p} \|f_{2}\|_{\ms{B}^s(\Omega)}}{N_{2}^{s}} \le \frac{C \sqrt{p} \upsilon_{f,s}}{N^{s}} , 
\end{equation}
where we used $N_{2}\ge N/3$ in the second inequality. 

For $N = 1$, we simply choose $f_{2,N_{2}} = 0$ and 
\begin{equation} \label{ineq: approximate high frequency part, N=1, shallow NN}
\|f_{2}-f_{2,N_{2}}\|_{L^p(\Omega)} \le \|f_{2}\|_{L^{\infty}(\Omega)} \le \|f_{2}\|_{\ms{B}^s(\Omega)}\le \sqrt{2} \upsilon_{f,s} . 
\end{equation}

Now, we take $f_{N} = f_{1,N_{1}} + f_{2,N_{2}}$. Then, $f_{N}$ contains at most $N$ units. We conclude from (\ref{ineq: approximate low frequency part, shallow NN}), (\ref{ineq: approximate high frequency part, N>1, shallow NN}) and (\ref{ineq: approximate high frequency part, N=1, shallow NN}) that
\begin{equation*} 
\begin{aligned}
\|f-f_{N}\|_{L^p(\Omega)} & \le \|f_{1}-f_{1,N_{1}}\|_{L^p(\Omega)} + \|f_{2}-f_{2,N_{2}}\|_{L^p(\Omega)} \\
& \le C\sqrt{p} \upsilon_{f,s} N^{-s},
\end{aligned}
\end{equation*}
where $C$ is a universal constant. This completes the proof.
\end{proof}

\section{Deep ReLU networks approximation}\label{Section: deep neural network approximation}
In this part,  we extend the results in the previous section to deep ReLU networks. 
%We observe that the prefactor $\sqrt{p}$ blows up as $p\to\infty$, {\red i.e., the uniform approximation bound cannot be obtained from \cref{Theorem: shallow NN Lp approximation}. To sidestep this issue, we prove uniform approximation bounds.}
Besides $L^p$-approximation, we also prove the $L^\infty$-approximation bounds.  
We define an $(L,N)$-network as a ReLU network with $L$ hidden layers and at most $N$ units per layer 
\begin{equation*}
f_{N}(x) = W_{L}\operatorname{ReLU}(W_{L-1}\cdot)\circ\cdots\circ\operatorname{ReLU}(W_{1}\cdot)\circ \operatorname{ReLU}(W_{0}x+b_{0}) + b_{L},
\end{equation*}
where $W_{0}\in \mb{R}^{N\times d}$, $W_{1},\ldots,W_{L-1}\in \mb{R}^{N\times N}$, $W_{L}\in \mb{R}^{1\times N}$ and $b_{0}\in \mb{R}^{N}$, $b_{L}\in \mb{R}$. With this definition, the shallow ReLU neural network is a $(1,N)$-network.
\subsection{\texorpdfstring{$L^{p}$}{Lp}-approximation} 
The fundamental work on DNN approximation of the spectral Barron functions is~\cite[Theorem 1]{BreslerNagaraj:2020}, where the authors have proven $L^{2}$-approximation rate of $\mathcal{O}(N^{-sL/2})$. We have improved the approximation rate to $\mathcal{O}(N^{-sL})$ in~\cite[Theorem 3.5]{LiaoMing:2023}.
%{\red (YL: suggest delete) with $\upsilon_{f,s}$ instead of $\sqrt{\upsilon_{f,2s}\upsilon_{f,0}}$ appears in the estimate.} 
As summarized in Theorem~\ref{Theorem: Lp approximation by deep NN} and Corollary~\ref{Corollary: approximation bounds with spectral semi-norm DNN}, we shall extend this result to $L^{p}$-approximation bounds, demonstrating that the rate $\mathcal{O}(N^{-sL})$ can be preserved in $L^{p}(\Omega)$ for all $p\ge 1$\footnote{Note that the $L^{p}(\Omega)$ norm is controlled by $L^{2}(\Omega)$ norm for $1\le p\le 2$.}.
\begin{theorem} \label{Theorem: Lp approximation by deep NN}
Let $p \ge 2$, $L\in \mathbb{N}_{+}$ and $f\in\ms{B}^s(\Omega)$ with $0<sL\le 1/2$. For any positive integer $N$, there exists an $(L,N)$-network $f_{L,N}$ such that
\begin{equation}\label{eq: Lp approximation error by deep NN}
\|f-f_{L,N}\|_{L^p(\Omega)} \le 13 \sqrt{p} \nm{f}{\ms{B}^s(\Omega)} N^{-sL}. %\le 8\pi \Gamma((p+1)/2)^{1 / p} \nm{f}{\ms{B}^s(\Omega)} N^{-sL}.
\end{equation}
\end{theorem}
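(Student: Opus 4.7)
The proof plan is to adapt the shallow-network argument of Theorem~\ref{Theorem: shallow NN Lp approximation} so that the $L$ available hidden layers supply an extra factor $L$ in the exponent, yielding $N^{-sL}$ in place of $N^{-s}$. The starting point is the multiscale expansion of $f\in\ms{B}^s(\Omega)$ provided by Lemma~\ref{lema:expansion},
\[
f(x)=\int_{\mb{R}^d}\abs{\wh{f}(\xi)}\sum_{l=0}^{\infty}\sum_{j\in\mc{J}(\xi,l)}\alpha_{\xi,l,j}\chi_{[0,1)}(2^l\xi\cdot x-j)\md\xi,
\]
which I recast as $f=Q\,\mb{E}_{\mu}F(x,\xi,r)$ against a probability measure $\mu$ on $\mb{R}^d\times\{0,1,\ldots,L-1\}$ whose scale weights are geometric in $l$ with the decay rate tuned (by analogy with the shallow weight $2^{-(1+s)l}$) so that the per-atom $L^p$-mass scales as $N^{-sL}$. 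A Monte Carlo estimator $f_\Xi=(Q/m)\sum_{i=1}^m F(\cdot,\xi_i,r_i)$ with $m=O(N)$ is then the deep-network target.

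The critical new ingredient, inherited from the $L^2$ prototype in~\cite{LiaoMing:2023}, is that each indicator atom $\chi_{[0,1)}(2^r\xi\cdot x-j)$ at scale $r\in\{0,\ldots,L-1\}$ can be realized by a depth-$L$, constant-width ReLU sub-network via an iterated sawtooth/composition cascade: the first hidden layer computes the affine form $\xi\cdot x-2^{-r}j$, and the next $L-1$ layers progressively amplify the effective resolution by factors of two without enlarging the weight matrices. Running $N$ such atom pipelines in parallel within an $(L,N)$-network and summing them in the output layer reproduces $f_\Xi$ exactly. This is what upgrades the rate from $N^{-s}$ (shallow) to $N^{-sL}$ (depth-$L$), since each of the $L$ layers contributes a doubling of the atom resolution while the width stays at $N$.

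The $L^p$-error estimate then mirrors Theorem~\ref{Theorem: shallow NN Lp approximation} line by line: the independent-copy symmetrization introduces Rademacher variables $\tau_i$, Khintchine's inequality (Lemma~\ref{Lemma: Khintchine ineq with optimal constant}) with the sharp constant $C_p\le\sqrt{p/2}$ from Lemma~\ref{Lemma: Bound for Gamma function} supplies the $\sqrt{p}$ prefactor, and each atom contributes an $L^p$-mass at most $2^{sr}(1+\abs{\xi}_1)^s$. Combining the resulting variance bound with the Markov-inequality control on the total atom count yields a deterministic $(L,N)$-network satisfying~\eqref{eq: Lp approximation error by deep NN}. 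The principal obstacle is the depth-$L$ realization: one must verify that the sawtooth cascade producing the scale factor $2^r$ interlocks cleanly with the width-$N$ parallel allocation and does not introduce extra $L$-dependence in the constants. The hypothesis $sL\le 1/2$ enters at precisely this point, ensuring that the geometric weight sum $\sum_{l=0}^{L-1}2^{-(1-sL)l}$ remains uniformly bounded in $L$, which is what makes the final prefactor $L$-independent.
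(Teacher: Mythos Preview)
Your proposal has a genuine structural gap: the multiscale indicator expansion of Lemma~\ref{lema:expansion} does not benefit from depth in the way you assert. A single atom $\chi_{[0,1)}(2^r\xi\cdot x-j)$ is one step function; a depth-$L$ ReLU cascade gains you nothing over one layer for representing it, so ``constant-width sub-network per atom'' simply reproduces the shallow width count $\sum_i 2^{r_i}(1+|\xi_i|_1)$ and the rate stays $N^{-s}$. The full scale-$r$ packet $\sum_{j}\alpha_{\xi,r,j}\chi_{[0,1)}(2^r\xi\cdot x-j)$ is no better: the coefficients $\alpha_{\xi,r,j}$ are not periodic in $j$, so the sawtooth composition Lemma~\ref{lema:comp} (which requires the outer function to be symmetric/periodic) cannot compress it. Restricting the scale to $r\in\{0,\dots,L-1\}$ also truncates the expansion of $\cos$ rather than representing $f$ exactly, and your geometric sum $\sum_{l=0}^{L-1}2^{-(1-sL)l}$ does not arise from any correct accounting here.

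The paper abandons the indicator expansion for deep networks and instead writes each frequency via the continuous integral representation $\cos(2\pi n_\xi t)=\pi^2\int_0^1\sin(\pi r)\gamma_{,n_\xi}(t,r)\,\mr{d}r$ (Lemma~\ref{Lemma: Integral representation of cos by gamma}), where $\gamma_{,n_\xi}(\cdot,r)$ is a genuinely periodic piecewise-linear wave with $n_\xi\sim 1+|\xi|_1$ periods. Sampling $(\xi,r)$ then yields atoms $\gamma_{,n_\xi}(t_\xi(x),r)$ whose periodicity is exactly what Lemma~\ref{lema:comp} needs: choosing $n_1=\cdots=n_L=\lceil(1+|\xi|_1)^{1/L}\rceil$ factors $\gamma_{,n_\xi}=\gamma_{,n_L}\circ\beta_{,n_{L-1}}\circ\cdots\circ\beta_{,n_1}$, so each layer costs only $O\bigl((1+|\xi|_1)^{1/L}\bigr)$ units. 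The total per-layer width is then $N_\Xi\lesssim\sum_i(1+|\xi_i|_1)^{1/L}$, and the hypothesis $sL\le 1/2$ enters through the subadditivity bound $\mb{E}\,N_\Xi^{2sL}\le\sum_i\mb{E}(1+|\xi_i|_1)^{2s}$, not through a finite geometric sum. After this, the symmetrization/Khintchine/Markov portion of your outline is indeed identical to the shallow case. The missing idea is therefore the periodic $\gamma_{,n}$ representation together with its $L$-fold factorization; the indicator expansion cannot substitute for it.
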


Inspired by~\cite{BreslerNagaraj:2020}, our proof is based on an integral representation that is more suitable for handling DNN.  
The integral representation was introduced by authors in~\cite{BreslerNagaraj:2020} for DNN, offering an alternative to the expansion in~\eqref{eq: expansion of f in shallow NN approximation} that is commonly used for SNN. Here, we adopt this integral representation, and restate it below with more explicit and accessible notation for the reader’s convenience. A similar representation has been exploited by two of the authors in~\cite{LiaoMing:2023}.

First, we make some preparations for the rest work.  For any function $g$ defined on $[0,1]$ and it is symmetric about $x=1/2$, We use the notation $g_{,n}$ to denote the function $g$ in the $[0,1]$ interval of the period repeated $n$ times, i.e.,
\begin{equation}\label{eq:gn}
	g_{,n}(t)=g(nt-j),\quad j=0,\dots,n-1,\quad 0\le nt-j\le 1.
\end{equation}
Define a hat function
\[
	\beta(t)=\text{ReLU}(2t)-2\text{ReLU}(2t-1)+\text{ReLU}(2t-2)=\begin{cases}
		2t,&0\le t\le 1/2,\\
		2-2t,&1/2\le t\le 1,\\
		0,&\text{otherwise}.
	\end{cases}
\]
By definition~\eqref{eq:gn}, $\beta_{,n}$ represents a triangle function with $n$ peaks and can be represented by $3n$ ReLUs:
\[
\beta_{,n}(t)=\sum_{j=0}^{n-1}\beta(nt-j),\quad 0\le t\le 1.
\]
The following lemma describes the effect of the composition with $\beta_{,n}$. A geometrical explanation may be founded in~\cite[Figure 3]{Bolcskei:2021}. An interesting example is $g=\cos$ and $\cos(2\pi n_2\beta_{,n_1}(t))=\cos(4\pi n_1n_2t)$ when $t\in [0, 1]$. 
\begin{lemma}[{\cite[Lemma 3.5]{LiaoMing:2023}}]\label{lema:comp}
	Let $g$ be a function defined on $[0,1]$ and symmetric about $x=1/2$, then $g_{,n_2}\circ\beta_{,n_1}=g_{,2n_1n_2}$ on $[0,1]$.
\end{lemma}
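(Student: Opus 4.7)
My plan is to verify the identity pointwise by analysing what happens on each of the $n_1$ sub-intervals $[j/n_1,(j+1)/n_1]$ over which $\beta_{,n_1}$ executes a single tent. By construction, $\beta_{,n_1}$ maps each such sub-interval bijectively onto $[0,1]$, rising linearly from $0$ to $1$ on the first half and descending linearly from $1$ to $0$ on the second half. The heart of the argument is therefore to unfold one tent and count how many copies of $g$ appear inside.

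First I would treat the ascending half $t\in[j/n_1,\,j/n_1+1/(2n_1)]$, where $\beta_{,n_1}(t)=2(n_1t-j)$ sweeps $[0,1]$ once. The composition $g_{,n_2}\circ\beta_{,n_1}$ on this part of the domain therefore produces $n_2$ consecutive copies of $g$, each compressed to width $1/(2n_1n_2)$ and placed in increasing order. Next I would treat the descending half $t\in[j/n_1+1/(2n_1),\,(j+1)/n_1]$, where $\beta_{,n_1}(t)=2-2(n_1t-j)$ sweeps $[0,1]$ in reverse. A short intermediate observation, namely that $g_{,n_2}(1-u)=g_{,n_2}(u)$ for all $u\in[0,1]$, follows from the symmetry of $g$ about $1/2$ by checking on each sub-interval $[k/n_2,(k+1)/n_2]$; it shows that reversing the argument leaves $g_{,n_2}$ unchanged. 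Consequently the descending half also contributes $n_2$ forward copies of $g$, each of width $1/(2n_1n_2)$, abutting the copies produced by the ascending half.

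Concatenating the two halves yields $2n_2$ equally spaced copies of $g$ on each interval $[j/n_1,(j+1)/n_1]$, and letting $j$ run from $0$ to $n_1-1$ tiles all of $[0,1]$ with exactly $2n_1n_2$ copies of $g$ of width $1/(2n_1n_2)$, which is precisely $g_{,2n_1n_2}$ by the definition in \eqref{eq:gn}.

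The step I expect to require the most care is the index bookkeeping across the two halves: on the descending half the natural enumeration of copies of $g$ is reversed, and one has to verify that the symmetry identity $g(1-\cdot)=g(\cdot)$ re-labels them so that the concatenation across the two halves is order-preserving. The substance is elementary; making it clean amounts to writing $t=(j+s)/n_1$ with $s\in[0,1]$ and tracking $k=\lfloor 2sn_2\rfloor$ on the up-slope versus $k=\lfloor(2-2s)n_2\rfloor$ on the down-slope, then checking that the resulting piecewise formula matches $g(2n_1n_2 t-j')$ for the expected index $j'$ on each piece.
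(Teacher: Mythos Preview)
Your proposal is correct: the pointwise verification over each tent of $\beta_{,n_1}$, together with the symmetry identity $g_{,n_2}(1-u)=g_{,n_2}(u)$ deduced from $g(1-\cdot)=g(\cdot)$, cleanly produces the $2n_1n_2$ copies required by~\eqref{eq:gn}. The paper itself does not supply a proof of this lemma; it is quoted from~\cite[Lemma~3.5]{LiaoMing:2023}, so there is no in-paper argument to compare against, but your direct unfolding is precisely the natural route and matches what that reference does.
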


For $r\in(0,1)$, we define 
\begin{equation*}
\begin{aligned}
\alpha(t,r) & =\operatorname{ReLU}(t)-\operatorname{ReLU}(t-r/2)-\operatorname{ReLU}(t-(1-r)/2)+\operatorname{ReLU}(t-1/2) \\
& = \begin{cases}
t, & 0 \le t \le\min(r,1-r)/2,\\
\min(r,1-r)/2, & \min(r,1-r)/2\le t\le \max(r,1-r)/2,\\
1/2 - t, & \max(r,1-r)/2\le t\le 1/2,\\
0, & \text{otherwise},\\
\end{cases}
\end{aligned}
\end{equation*}
then supp$(\alpha(\cdot,r))\subset[0,1/2]$ and $\alpha(t,r)$ is symmetric about $t=1/4$. Define $$\gamma(t,r)=\alpha(t+1/4,r)-\alpha(t-1/4,r)+\alpha(t-3/4,r).$$
Then $\gamma(t,r)$ is symmetric about $t=1/2$ because
\begin{align*}
\gamma(1-t,r)&=\alpha(5/4-t,r)-\alpha(3/4-t,r)+\alpha(1/4-t,r)\\
&=\alpha(t-3/4,r)-\alpha(t-1/4,r)+\alpha(t+1/4,r)=\gamma(t,r).
\end{align*}
By definition~\eqref{eq:gn}, $\gamma_{,n}(\cdot,r)$ is well defined on $[0,1]$ and
\begin{align*}
\gamma_{,n}(t,r)=&\begin{cases}
\alpha(nt-j+1/4,r), & 0\le nt-j \le 1/4, \\
-\alpha(nt-j-1/4,r), & 1/4\le nt-j\le 3/4,\\
\alpha(nt-j-3/4,r), & 3/4\le nt-j\le 1,
\end{cases}&& j=0,\dots,n-1,\\
=&\begin{cases}
\alpha(nt-j+1/4,r), & -1/4\le nt-j\le 1/4,\\
-\alpha(nt-j-1/4,r), & 1/4\le nt-j\le 3/4,\\
\end{cases} && j=0,\dots,n.
\end{align*}
In fact, $\gamma_{,n}(\cdot,r)$ on $[0,1]$ can be represented by $4n$ ReLU functions plus a constant 
\begin{equation*}
\begin{aligned}
\gamma_{,n}(t,r) & =\sum_{j = 0}^{n}\alpha(nt-j+1/4,r) - \sum_{j = 0}^{n-1}\alpha(nt-j-1/4,r) \\
& = \frac{1}{2}\min(r,1-r) + \sum_{j = 0}^{n-1} \left[-\operatorname{ReLU}\left(nt-j+\frac{1}{4}-\frac{\max(r,1-r)}{2}\right) \right. \\
& \qquad  + \operatorname{ReLU}\left(nt-j-\frac{1}{4}-\frac{r}{2}\right) + \operatorname{ReLU}\left(nt-j-\frac{3}{4}+\frac{r}{2}\right)  \\
& \qquad - \left. \operatorname{ReLU}\left(nt-j-\frac{3}{4}-\frac{\min(r,1-r)}{2}\right)\right] .
\end{aligned}
\end{equation*}

A direct consequence of the above construction is~\cite[Lemma 3]{BreslerNagaraj:2020}. 
%We retain the proof for the reader's convenience.
%
\begin{lemma}\label{Lemma: Integral representation of cos by gamma}
For $t\in[0,1]$, there holds
\begin{equation}\label{eq: Integral representation of cos by gamma,n(t,r)}
	\pi^{2}\int_0^1\sin(\pi r)\gamma_{,n}(t,r)\mr{d}r=\cos(2\pi nt).
\end{equation}
\end{lemma}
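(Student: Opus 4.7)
The plan is to reduce to the single-period case $n=1$ using the periodization structure of $\gamma_{,n}$, and then verify the $n=1$ case by direct piecewise integration.

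\textbf{Step 1 (Reduction to $n=1$).} By definition \eqref{eq:gn}, for each $j \in \{0, \dots, n-1\}$ and $t \in [j/n, (j+1)/n]$ we have $\gamma_{,n}(t, r) = \gamma(nt - j, r)$, while $\cos(2\pi n t) = \cos(2\pi(nt - j))$. So once I have proved $\pi^{2} \int_0^1 \sin(\pi r)\,\gamma(\tau, r)\,\mr{d}r = \cos(2\pi \tau)$ for all $\tau \in [0, 1]$, identity \eqref{eq: Integral representation of cos by gamma,n(t,r)} follows on each of the $n$ subintervals partitioning $[0, 1]$.

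\textbf{Step 2 (A base identity for $\alpha$).} For $u \in [0, 1/2]$, I would compute $I(u) := \int_0^1 \sin(\pi r)\,\alpha(u, r)\,\mr{d}r$ by using the explicit piecewise-linear description of $\alpha(u, r)$ in $r$ given just before the lemma: three pieces equal to $r/2$, a constant plateau, and $(1-r)/2$, with breakpoints at $\min\{2u, 1-2u\}$ and $\max\{2u, 1-2u\}$ (whose order flips at $u = 1/4$, forcing separate treatment of $u \in [0, 1/4]$ and $u \in [1/4, 1/2]$). The substitution $r \mapsto 1-r$ together with $\sin(\pi(1-r)) = \sin(\pi r)$ merges the two outer linear pieces, and integration by parts using $\int r \sin(\pi r)\,\mr{d}r = -r\cos(\pi r)/\pi + \sin(\pi r)/\pi^{2}$ combined with $\cos(\pi - 2\pi u) = -\cos(2\pi u)$ produces an exact cancellation of the $\cos(2\pi u)$ boundary contributions. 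Both subcases collapse to the single clean identity $\pi^{2} I(u) = \sin(2\pi u)$ on $[0, 1/2]$.

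\textbf{Step 3 (Assembling $\gamma$).} Since $\operatorname{supp}\alpha(\cdot, r) \subset [0, 1/2]$, the three shifted copies in $\gamma(t, r) = \alpha(t+1/4, r) - \alpha(t-1/4, r) + \alpha(t-3/4, r)$ have disjoint $t$-supports whose intersections with $[0, 1]$ are $[0, 1/4]$, $[1/4, 3/4]$, $[3/4, 1]$, respectively, so exactly one term contributes on each piece. Applying $\pi^{2} I = \sin(2\pi\,\cdot\,)$ on each of the three subintervals yields the contributions $\sin(2\pi t + \pi/2)$, $-\sin(2\pi t - \pi/2)$, and $\sin(2\pi t - 3\pi/2)$, each of which equals $\cos(2\pi t)$ by a trivial phase shift. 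Combined with Step 1, this proves the lemma.

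\textbf{Main obstacle.} There is no conceptual subtlety; the only real work is the bookkeeping for the two subcases in Step 2 and verifying that the $\cos(2\pi u)$ boundary terms arising from integration by parts cancel exactly against those from the plateau piece. A minor trap is getting the three sign/phase combinations in Step 3 right so that all three pieces reconstitute the same $\cos(2\pi t)$.
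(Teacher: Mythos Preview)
Your proposal is correct and follows essentially the same approach as the paper's (suppressed) proof: both first establish the base identity $\pi^{2}\int_{0}^{1}\sin(\pi r)\,\alpha(u,r)\,\mr{d}r=\sin(2\pi u)$ for $u\in[0,1/2]$ and then recover $\cos$ by case analysis on the shifted copies of $\alpha$ that make up $\gamma$ (equivalently $\gamma_{,n}$). The only cosmetic differences are that you reduce to $n=1$ first while the paper works directly with $\gamma_{,n}$, and in Step~2 you integrate by reading $\alpha(u,\cdot)$ as a three-piece function of $r$, whereas the paper integrates term by term via the ReLU decomposition $\alpha(t,r)=\operatorname{ReLU}(t)-\operatorname{ReLU}(t-r/2)-\operatorname{ReLU}(t-(1-r)/2)+\operatorname{ReLU}(t-1/2)$.
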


\iffalse
\begin{proof}
For any $t\in[0,1/2]$, a direct calculation gives
\[
 \left[\operatorname{ReLU}(t)+\operatorname{ReLU}(t-1/2)\right] \int_{0}^{1} \sin (\pi r) \mr{d} r = \frac{2 t}{\pi},
\]
and
\begin{align*}
\int_{0}^{1} \sin (\pi r) \operatorname{ReLU}(t-r/2) \mr{d} r&=\int_{0}^{1} \sin (\pi r) \operatorname{ReLU}(t-(1-r)/2) \mr{d} r\\
&=  \int_{0}^{2 t} \sin (\pi r) (t-r/2) \mr{d} r= \frac{t}{\pi} -\frac{\sin (2 \pi t)}{2\pi^{2}} ,
\end{align*}
which implies  $$\pi^{2}\int_0^1\sin(\pi r)\alpha(t,r)\mr{d}r=\sin(2\pi t).$$
Fix a $t\in[0,1]$. If there exists an integer $j$ satisfying $0\le j\le n$ and $-1/4\le nt-j\le 1/4$, then $\gamma_{,n}(t,r)=\alpha(nt-j+1/4,r)$ and
\[
	\pi^{2}\int_0^1\sin(\pi r)\alpha(nt-j+1/4,r)\mr{d}r=\sin(2\pi(nt-j+1/4))=\cos(2\pi nt).
\]
Otherwise there exists an integer $j$ satisfying $0\le j\le n-1$ and $1/4\le nt-j\le 3/4$. Then $\gamma_{,n}(t,r)=-\alpha(nt-j-1/4,r)$ and
\[
	-\pi^{2}\int_0^1\sin(\pi r)\alpha(nt-j-1/4,r)\mr{d}r=-\sin(2\pi(nt-j-1/4))=\cos(2\pi nt).
\]
This completes the proof of~\eqref{eq: Integral representation of cos by gamma,n(t,r)}.
\end{proof}\fi

Now we are ready to prove the $L^{p}$-approximation bound for DNN.

\begin{proof}[Proof of Theorem~\ref{Theorem: Lp approximation by deep NN}]
For $f\in\ms{B}^s(\Omega)$, we continue to use $f$ to denote the extension function on the entire region $\mb{R}^d$ that attains the infimum of the norm. Then $\wh{f}\in L^{1}(\mb{R}^d)$ and by Fourier inversion theorem~\cite[Lemma 2.1]{LiaoMing:2023}, 
\[
f(x)=\int_{\mb{R}^d}\wh{f}(\xi)e^{2\pi i\xi\cdot x}\mr{d}\xi=\int_{\mb{R}^d}\abs{\wh{f}(\xi)}\cos(2\pi(\xi\cdot x+\theta(\xi)))\mr{d}\xi,
\]
with proper choice $\theta(\xi)$ such that $0\le\xi\cdot x+\theta(\xi)\le1+\abs{\xi}_1$. For fixed $\xi$, choose $n_\xi=2^{L-1}\lceil(1+\abs{\xi}_1)^{1/L}\rceil^L$ and $t_\xi(x)=(\xi\cdot x+\theta(\xi))/n_\xi$, then $0\le t_\xi(x)\le 1$ and by Lemma~\ref{Lemma: Integral representation of cos by gamma},
\[
f(x) =\int_{\mb{R}^d}\abs{\wh{f}(\xi)}\cos(2\pi n_\xi t_\xi(x))\mr{d}\xi=\pi^{2}\int_{\mb{R}^d} \int_0^1 \abs{\wh{f}(\xi)}\sin(\pi r)\gamma_{,n_\xi}(t_\xi(x),r)\mr{d}r \mr{d}\xi.
\]
Define the probability measure
\begin{equation*}\label{eq:probability distribution of sampling}
\mu(\mr{d}\xi,\mr{d}r){:}=\frac{\pi}{2Q}(1+\abs{\xi}_1)^{-s}\abs{\wh{f}(\xi)} \sin(\pi r)\chi_{(0,1)}(r)\mr{d}\xi\mr{d}r, 
\end{equation*}
where $Q = \int_{\mb{R}^{d}} \left(1+\abs{\xi}_1\right)^{-s} \abs{\wh{f}(\xi)}\mr{d}\xi$.  Therefore,
\begin{equation} \label{eq: expectation representation of f for DNN approximation}
f(x)= 2\pi Q\mb{E}_{(\xi,r)\sim\mu}F(x,\xi,r)
\end{equation}
with 
\[
F(x,\xi,r)= (1+\abs{\xi}_1)^{s}\gamma_{,n_\xi}(t_\xi(x),r).
\]
Let $\Xi = \{(\xi_i,r_i)\}_{i=1}^m$ be an i.i.d. sequence of random samples from $\mu$, and $$f_{\Xi}(x)=\frac{2\pi Q}{m}\sum_{i=1}^m F(x,\xi_i,r_i).$$ 
Let $\Xi^{\prime}= \{(\xi_i^{\prime},r_i^{\prime})\}_{i=1}^m$ be an independent copy of $\Xi$. Then, as in the proof of Theorem~\ref{Theorem: shallow NN Lp approximation}, by conditional expectation, independence and Jensen's inequality, 
\begin{equation*}
\begin{aligned}
\mb{E}_{\Xi}\|f-f_{\Xi}\|_{L^p(\Omega)}^{2} %& \le \frac{4\pi^{2} Q^{2}}{m^{2}} \mb{E}_{\Xi}\left\|\mb{E}_{\Xi^{\prime}}\left[\sum_{i=1}^m \left(F(\cdot,\xi_i,r_i) - F(\cdot,\xi_i^{\prime},r_i^{\prime})\right)\right]\right\|_{L^p(\Omega)}^{2}\\
& \le \frac{4\pi^{2} Q^{2}}{m^{2}} \mb{E}_{\Xi,\Xi^{\prime}}\left\|\sum_{i=1}^m \left[F(\cdot,\xi_i,r_i) - F(\cdot,\xi_i^{\prime},r_i^{\prime})\right]\right\|_{L^p(\Omega)}^{2}.
\end{aligned}
\end{equation*}
By the same argument to obtain (\ref{ineq: symmetrization in Lp norm, shallow NN}) and (\ref{ineq: conditioned estimation when Lp, shallow NN}), we obtain
\begin{equation} \label{ineq: symmetrization and conditioned estimation when Lp, DNN}
\begin{aligned}
\mb{E}_{\Xi}\|f-f_{\Xi}\|_{L^p(\Omega)}^{2} & \le \frac{16\pi^{2} Q^{2}}{m^{2}}C_{p}^{2}   \sum_{i=1}^m\mb{E}_{\Xi}\left\| F(\cdot,\xi_i,r_i)\right\|_{L^{p}(\Omega)}^{2}.
\end{aligned}
\end{equation}
Note that $\|F(\cdot,\xi,r)\|_{L^p(\Omega)} \le  \|F(\cdot,\xi,r)\|_{L^\infty(\Omega)}\le (1+\abs{\xi}_1)^{s}/4$
and 
\[
\mb{E}_{(\xi,r)\sim \mu}\|F(\cdot,\xi,r)\|_{L^p(\Omega)}^{2}\le \frac{\pi}{32Q} \int_{\mb{R}^d} \int_0^1(1+\abs{\xi}_1)^{s}\abs{\wh{f}(\xi)} \sin(\pi r)\mr{d}r\mr{d}\xi\le \frac{\|f\|_{\ms{B}^{s}(\Omega)}}{16Q}.
\]
A combination of (\ref{ineq: symmetrization and conditioned estimation when Lp, DNN}) and the above estimate yields that
\[
\mb{E}_{\Xi}\|f-f_{\Xi}\|_{L^p(\Omega)}^{2}  %& \le \frac{16\pi^{2} Q^{2}}{m^{2}} C_{p}^{2} \sum_{i=1}^m \mb{E}_{\Xi}\left\| F(\cdot,\xi_i,r_i)\right\|_{L^{p}(\Omega)}^{2}  \\
\le \frac{\pi^{2}}{m} C_{p}^{2} Q   \|f\|_{\ms{B}^{s}(\Omega)} .
\]
By Markov's inequality, with probability at least $(1+\varepsilon)/(2+\varepsilon)$, for some $\varepsilon>0$ to be chosen later on, we have
\begin{equation}\label{eq: Lp error}
\|f-f_{\Xi}\|_{L^p(\Omega)}^{2} \le \frac{\pi^{2}(2+\varepsilon)}{m} C_{p}^{2} Q   \|f\|_{\ms{B}^{s}(\Omega)} .
\end{equation}
	
It remains to calculate the number of units in each layer. For each $\gamma_{,n_\xi}(t_\xi(x),r)$, choose $n_1=\dots=n_L=\lceil(1+\abs{\xi}_1)^{1/L}\rceil$, then $n_\xi=2^{L-1}n_1\dots n_L$, and by Lemma~\ref{lema:comp}, $\gamma_{,n_\xi}(\cdot,r)=\gamma_{,n_L}(\cdot,r)\circ\beta_{,n_{L-1}}\circ\dots\circ\beta_{,n_1}$ on $[0,1]$. We need at most
$$\max\{3n_1,\dots,3n_{L-1},4n_L\} = 4\lceil(1+\abs{\xi}_1)^{1/L}\rceil \le 8(1+\abs{\xi}_1)^{1/L}$$
units in each layer to represent $\gamma_{,n_\xi}(t_\xi(x),r)$. 
Denote $N_{\Xi}$ the total number of units in each layer, then $N_{\Xi}\le 8\sum_{i=1}^m(1+\abs{\xi_i}_{1})^{1/L}$ and 
\[
\mb{E}_{\Xi}N_{\Xi}^{2sL}\le 8\mb{E}_\Xi\sum_{i=1}^m(1+\abs{\xi_i}_{1})^{2s}\le\frac{8m}{Q}\|f\|_{\ms{B}^{s}(\Omega)}.
\]
Again, by Markov inequality, with probability at least $(1+\varepsilon)/(2+\varepsilon)$, we obtain
\begin{equation}\label{eq: width bound}
\dfrac{Q}{m}\le 8 (2+\varepsilon) \|f\|_{\ms{B}^{s}(\Omega)} N_{\Xi}^{-2sL}. 
\end{equation}

Combining~\eqref{eq: Lp error} and~\eqref{eq: width bound}, with probability at least $\varepsilon/(2+\varepsilon)$, there exists an $(L,N_\Xi)$-network $f_{\Xi}$ such that
\[
\|f-f_{\Xi}\|_{L^p(\Omega)} \le 2\sqrt{2}(2+\varepsilon)\pi C_{p} \|f\|_{\ms{B}^{s}(\Omega)} N_\Xi^{-sL} . 
%& \le 4(2+\varepsilon) \pi^{1-1/(2p)} \Gamma((p+1)/2)^{1/p}  \|f\|_{\ms{B}^{s}(\Omega)} N_{\Xi}^{-sL}.  \\
%& \le \pi^{-1/(2p)} \Gamma((p+1)/2)^{1/p} 4(2+\varepsilon)\pi \|f\|_{\ms{B}^{s}(\Omega)} N_{\Xi}^{-sL}  \\
%& \le 0.551 \sqrt{p} 4(2+\varepsilon)\pi C_{p} \|f\|_{\ms{B}^{s}(\Omega)} N_{\Xi}^{-sL}  
\]

Choosing $\varepsilon = 1/100$ and using Lemma~\ref{Lemma: Bound for Gamma function} again, we complete the proof. 
\end{proof}

Similar to Corollary~\ref{Corollary: approximation bounds with spectral semi-norm shallow NN}, we may obtain error bounds with merely the seminorm $\upsilon_{f,s}$ in the right hand side of the approximation bound.
\begin{corollary} \label{Corollary: approximation bounds with spectral semi-norm DNN}
Let $p \ge 1$, $L\in \mathbb{N}_{+}$ and $f\in\ms{B}^s(\Omega)$ with $0<sL\le 1/2$. For any positive integer $N$, there exists an $(L,N)$-network $f_{L,N}$ such that
\begin{equation*}
\|f-f_{L,N}\|_{L^p(\Omega)} \le C \sqrt{p} \upsilon_{f,s} N^{-sL},
\end{equation*}
where $C$ is a universal constant. 
\end{corollary}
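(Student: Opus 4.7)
The plan is to adapt the proof of Corollary~\ref{Corollary: approximation bounds with spectral semi-norm shallow NN} to the deep setting, decomposing $f$ into low- and high-frequency pieces and applying Theorem~\ref{Theorem: Lp approximation by deep NN} on the high-frequency side. Since $\|\cdot\|_{L^p(\Omega)} \le \|\cdot\|_{L^2(\Omega)}$ on $\Omega = [0,1]^d$, it is enough to treat $p \ge 2$. Write $f = f_1 + f_2$ as in~\eqref{eq: decompose f into low and high frequency parts}, so that $\wh{f}_1$ is supported on $\{|\xi|_1 < 1\}$ and $\wh{f}_2$ on $\{|\xi|_1 \ge 1\}$.

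For the high-frequency part, $1 + |\xi|_1 \le 2|\xi|_1$ gives $\|f_2\|_{\ms{B}^s(\Omega)} \le 2^s \upsilon_{f_2, s} \le \sqrt{2}\, \upsilon_{f, s}$, and Theorem~\ref{Theorem: Lp approximation by deep NN} produces an $(L, N_2)$-network with $L^p$-error at most $C\sqrt{p}\,\upsilon_{f, s}\,N^{-sL}$ (the corner case $N = 1$ is handled by the zero network together with $\|f_2\|_{L^\infty(\Omega)} \le \|f_2\|_{\ms{B}^s(\Omega)}$). For the low-frequency part, $|\xi|_1 < 1$ combined with $s \le 1/2$ yields $|\xi|_1 \le |\xi|_1^s$, hence $\upsilon_{f_1, 1} \le \upsilon_{f_1, s} \le \upsilon_{f, s}$, so $f_1 \in \ms{B}^1(\Omega)$; applying Lemma~\ref{Lemma: shallow sigmoidal Lp app B^1 by Makovoz1996} with the ReLU-realizable sigmoidal $\sigma(t) = \operatorname{ReLU}(t) - \operatorname{ReLU}(t-1)$ yields a shallow ReLU approximation $f_{1, N_1}$ with error $\le C\sqrt{p}\,\upsilon_{f, s}\,N_1^{-1/2}$, which is $\le C\sqrt{p}\,\upsilon_{f, s}\,N^{-sL}$ since $sL \le 1/2$.

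It remains to assemble the two pieces into a single $(L, N)$-network. Because the output of a ReLU hidden layer is nonnegative, inserting additional layers whose weight matrices are the identity and whose biases vanish passes the signal through unchanged; this pads the shallow network $f_{1, N_1}$ into an $(L, \cdot)$-network of the same width without altering its output. Stacking this in parallel with the deep network for $f_2$ via block-diagonal weights produces an $(L, N)$-network after absorbing the factor-of-two cost of the sigmoidal-to-ReLU conversion into the choice $N_1, N_2 \le N/2$, and the triangle inequality then combines the two error bounds into the claimed estimate with a universal constant. The main bookkeeping point is this last embedding and parallel-composition step; otherwise the argument reduces cleanly to the shallow estimate and Theorem~\ref{Theorem: Lp approximation by deep NN}.
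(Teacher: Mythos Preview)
Your proposal is correct and follows essentially the same approach as the paper: decompose $f$ into low- and high-frequency parts via~\eqref{eq: decompose f into low and high frequency parts}, apply Lemma~\ref{Lemma: shallow sigmoidal Lp app B^1 by Makovoz1996} (with $\sigma(t)=\operatorname{ReLU}(t)-\operatorname{ReLU}(t-1)$) to $f_1$ and Theorem~\ref{Theorem: Lp approximation by deep NN} to $f_2$, then embed the shallow piece into an $(L,\cdot)$-network and combine. The only cosmetic difference is that the paper cites the identity $t=\operatorname{ReLU}(t)-\operatorname{ReLU}(-t)$ for the depth embedding, whereas you use the (equally valid) observation that ReLU acts as the identity on the nonnegative outputs of a previous ReLU layer.
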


\begin{proof}
The proof follows a similar approach with that leads to Corollary~\ref{Corollary: approximation bounds with spectral semi-norm shallow NN}. Recall the decomposition in~\eqref{eq: decompose f into low and high frequency parts}. As before, we approximate the low-frequency component using Lemma~\ref{Lemma: shallow sigmoidal Lp app B^1 by Makovoz1996}, and the high-frequency component using Theorem~\ref{Theorem: Lp approximation by deep NN}. This is justified by the fact that a sigmoidal function can be represented by two ReLU units, as $\sigma(t) = \operatorname{ReLU}(t) - \operatorname{ReLU}(t-1)$, and shallow networks can be interpreted as a special case of deep networks since the identity function can be expressed as $t = \operatorname{ReLU}(t) - \operatorname{ReLU}(-t)$. The details are omitted for brevity.
\end{proof}

\subsection{Uniform approximation}
In this part, we control the $L^{\infty}$-approximation error. The main result, Theorem~\ref{Theorem: Linf approximation by deep NN}, which achieves the Monte Carlo rate under the minimal smoothness assumption known to date. We firstly recall
\begin{lemma}[{\cite[Proposition 2.2]{Caragea:2023}}]\label{lemma:Linf}
Let $f\in\ms{B}^1(\Omega)$. For any $N\in \mathbb{N}_{+}$, there exists a $(1,N)$-network $f_N$ such that
\[
\nm{f-f_N}{L^\infty(\Omega)}\le C\sqrt{d}\,\upsilon_{f,1}N^{-1/2},
\]
where $C$ is a universal constant.
\end{lemma}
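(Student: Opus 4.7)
The plan is to mirror the Monte Carlo framework used in the proofs of Theorems~\ref{Theorem: shallow NN Lp approximation} and~\ref{Theorem: Lp approximation by deep NN}, but to replace the symmetrization/Khintchine step with an empirical-process argument suited to the supremum norm. First, I would apply the Fourier inversion theorem to write $f\in\ms{B}^1(\Omega)$ as
\[
f(x)=\int_{\mb{R}^d}\abs{\wh{f}(\xi)}\cos(2\pi\xi\cdot x+\theta(\xi))\md\xi,
\]
and then represent each one-dimensional cosine on the relevant interval as an integral of shifted ReLU ridge functions $\operatorname{ReLU}(\xi\cdot x+b)$ against a bounded kernel, analogous in spirit to Lemma~\ref{Lemma: Integral representation of cos by gamma}. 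Absorbing the factor $(1+\abs{\xi}_1)$ from the homogeneity of ReLU yields the representation $f(x)=Q\,\mb{E}_{Z\sim\mu}\Phi(x,Z)$, where $Q$ is comparable to $\upsilon_{f,1}$ and $\Phi(\cdot,Z)$ is a uniformly bounded ReLU ridge function in $x$.

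Second, I would draw $N$ i.i.d.\ samples $Z_1,\dots,Z_N$ from $\mu$ and form the empirical mean
\[
f_N(x)=\frac{Q}{N}\sum_{i=1}^N\Phi(x,Z_i),
\]
which is a $(1,N)$-network by construction. To bound $\mb{E}\,\nm{f-f_N}{L^\infty(\Omega)}$, I would use the standard symmetrization inequality to reduce the problem to controlling the expected supremum of a Rademacher process $x\mapsto N^{-1}\sum_i\tau_i\Phi(x,Z_i)$ indexed by $x\in\Omega$. Dudley's entropy integral then gives a bound in terms of the $L^2$-covering numbers of the function class $\{x\mapsto\Phi(x,Z_i):i=1,\dots,N\}$. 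Since every $\Phi(\cdot,Z_i)$ is a truncated affine function of $x\in\Omega$, this class has pseudo-dimension of order $d$, and the entropy integral evaluates to $C\sqrt{d}\,N^{-1/2}$ up to universal constants.

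The main obstacle is twofold. The first is to design the probability measure $\mu$ so that $\Phi(x,Z)$ remains uniformly bounded after normalization by $Q$; the assumption $f\in\ms{B}^1(\Omega)$ is crucial here, as one must absorb a factor $(1+\abs{\xi}_1)$ into the density, which is precisely what the $\ms{B}^1$-norm accommodates. The second and more delicate obstacle is upgrading the pointwise variance control (which sufficed for the $L^p$-estimates via Khintchine) to uniform control on $x\in\Omega$; this is where the chaining step, and consequently the dimensional factor $\sqrt{d}$, enter. A final Markov selection step then extracts a deterministic $(1,N)$-network achieving the claimed bound.
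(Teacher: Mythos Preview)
The paper does not supply its own proof of this lemma; it is quoted from \cite[Proposition~2.2]{Caragea:2023} and used as a black box. Your outline is essentially the argument of that reference (and, for the Heaviside activation, already of \cite{Barron:1992}): write $f$ as an expectation of uniformly bounded ridge atoms, form the empirical average over $N$ samples, symmetrize, and control the resulting Rademacher process by a VC/chaining bound, which is exactly where the factor $\sqrt{d}$ appears. So within the present paper there is nothing to compare against, and your plan matches the cited source.

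One point deserves tightening. You say that absorbing the factor $(1+|\xi|_1)$ makes $Q$ comparable to the \emph{seminorm} $\upsilon_{f,1}$, but that weight produces the full norm $\|f\|_{\ms{B}^1(\Omega)}$. The standard remedy---used in both \cite{Barron:1992} and \cite{Caragea:2023}---is to subtract $f(0)$ before sampling, so that only the factor $|\xi|_1$ must be absorbed (since $\cos(2\pi\xi\cdot x+\theta)-\cos\theta$ has variation $\le 2\pi|\xi|_1$ on $\Omega$); the constant $f(0)$ is then reinstated through the bias term $c_0$ of the shallow network. A second detail: to land on $C\sqrt{d}\,N^{-1/2}$ \emph{without} a logarithmic factor you need the full Dudley integral together with a polynomial (Haussler-type) covering bound for the VC class of ReLU ridge functions; a single-scale entropy bound as in Lemma~\ref{Lemma: Metric entropy Bound for empirical Rademacher complexity} would leave a $\sqrt{\ln N}$, as it does in Theorem~\ref{Theorem: Linf approximation by deep NN}. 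With these two adjustments the plan is sound.
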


We reduce the smoothness assumption to $f\in\ms{B}^{1/2}(\Omega)$, while achieve the same SNN approximation rate. The result below is fully consistent with Theorem~\ref{Theorem: Lp approximation by deep NN} up to a logarithmic factor.
\begin{theorem} \label{Theorem: Linf approximation by deep NN}
Let $L\in \mathbb{N}_{+}$ and $f\in\ms{B}^s(\Omega)$ with $0<sL\le 1/2$. For any positive integer $N$, there exists an $(L,N)$-network $f_{L,N}$ such that
\begin{equation}\label{eq: Linf approximation error by deep NN}
\|f-f_{L,N}\|_{L^{\infty}(\Omega)} \le\frac{45 \|f\|_{\ms{B}^s(\Omega)}}{N^{sL}} \sqrt{1+dL\ln N}.
\end{equation}
\end{theorem}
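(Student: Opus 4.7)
My plan is to preserve the Monte-Carlo sampling framework from the proof of Theorem~\ref{Theorem: Lp approximation by deep NN} and swap the Khintchine-based $L^p$ step for a Hoeffding inequality combined with a union bound over a fine grid in $\Omega$. I reuse the integral representation $f(x)=2\pi Q\,\mb{E}_{(\xi,r)\sim\mu} F(x,\xi,r)$ and the empirical approximant $f_\Xi(x)=(2\pi Q/m)\sum_{i=1}^m F(x,\xi_i,r_i)$ of~\eqref{eq: expectation representation of f for DNN approximation}. Four applications of Markov's inequality isolate a favorable event of constant probability on which simultaneously: (i) $N_\Xi\le N$; (ii) $Q/m\lesssim\|f\|_{\ms{B}^s(\Omega)}N^{-2sL}$ as in~\eqref{eq: width bound}; (iii) $\sum_i(1+\abs{\xi_i}_1)^{2s}\lesssim m\|f\|_{\ms{B}^s(\Omega)}/Q$; and (iv) $\max_i(1+\abs{\xi_i}_1)\le CN^L$, which is automatic from (i) since every summand of $\sum_i(1+\abs{\xi_i}_1)^{1/L}$ is dominated by the whole sum.

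On a Cartesian grid $\mc{G}_K\subset\Omega$ of mesh $1/K$ with $K=N^L$, the bound $\abs{F(x_0,\xi_i,r_i)}\le(1+\abs{\xi_i}_1)^s/4$ together with (iii) yield a conditional Hoeffding tail at each $x_0\in\mc{G}_K$ of the form $2\exp(-c\,mt^2/(Q\|f\|_{\ms{B}^s(\Omega)}))$. A union bound over the $(N^L+1)^d$ grid points, combined with (ii), delivers
\[
\max_{x_0\in\mc{G}_K}|f(x_0)-f_\Xi(x_0)|\lesssim \|f\|_{\ms{B}^s(\Omega)}N^{-sL}\sqrt{1+dL\ln N}
\]
on a further event of positive probability.

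To pass from grid points to all of $\Omega$, I bound the oscillation of $f-f_\Xi$ inside each cell of diameter $1/(2K)$. For the target $f$, I invoke the embedding $\ms{B}^s(\Omega)\hkto C^s(\Omega)$, which follows directly from Fourier inversion and the elementary inequality $|e^{i\theta}-1|\le 2^{1-s}|\theta|^s$, to get $|f(x)-f(x_0)|\le 2\pi^s\|f\|_{\ms{B}^s(\Omega)}K^{-s}=O(\|f\|_{\ms{B}^s(\Omega)}N^{-sL})$. For $f_\Xi$, each atom $F(\cdot,\xi,r)$ is Lipschitz with constant $\lesssim(1+\abs{\xi}_1)^{s+1}$, obtained by composing the slope $\lesssim n_\xi$ of $\gamma_{,n_\xi}$ with the slope $\abs{\xi}_1/n_\xi$ of $t_\xi$; summing with the interpolation
\[
\sum_i(1+\abs{\xi_i}_1)^{s+1}\le\bigl(\max_i(1+\abs{\xi_i}_1)\bigr)^{s+1-1/L}\sum_i(1+\abs{\xi_i}_1)^{1/L}\le CN^{L(s+1)}
\]
and using (ii), the Lipschitz constant of $f_\Xi$ is at most $C\|f\|_{\ms{B}^s(\Omega)}N^{L(1-s)}$, so the within-cell oscillation is again $O(\|f\|_{\ms{B}^s(\Omega)}N^{-sL})$. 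Adding the three contributions yields~\eqref{eq: Linf approximation error by deep NN}, after which the realization of $f_\Xi$ as an $(L,N)$-network is handled exactly as in Theorem~\ref{Theorem: Lp approximation by deep NN}.

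The delicate step is the choice $K=N^L$: a coarser grid breaks the within-cell control since $\mathrm{Lip}(f_\Xi)$ grows like $N^{L(1-s)}$, while a finer one inflates the union-bound cardinality and spoils the prefactor; the resulting grid size $(N^L)^d$ is precisely what produces the $\sqrt{1+dL\ln N}$ factor. The per-sample uniform bound (iv), which is indispensable for converting the width control into a Lipschitz control of $f_\Xi$, is the main technicality, but it is fortunately forced by (i).
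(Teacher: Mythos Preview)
Your concentration step has a genuine gap. You claim a Hoeffding tail $2\exp(-cmt^2/(Q\|f\|_{\ms{B}^s}))$ for $|f(x_0)-f_\Xi(x_0)|$, but Hoeffding requires deterministic bounds on the summands, and $F(x_0,\xi_i,r_i)$ is unbounded under $\mu$: only the second moment $\mb{E}_\mu(1+|\xi|_1)^{2s}=\|f\|_{\ms{B}^s}/Q$ is guaranteed finite, so $F(x_0,\xi,r)$ is not sub-Gaussian (nor even sub-exponential) in general. Conditioning on event (iii) destroys independence and does not rescue the inequality; conditioning on the $\xi_i$'s alone shifts the conditional mean away from $f(x_0)/(2\pi Q)$; and truncating at level $CN^{sL}$ followed by Hoeffding or Bernstein lands you in the sub-exponential regime at the deviation scale you need, costing an extra $\sqrt{dL\ln N}$.

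The fix is to symmetrize first: bound $\mb{E}_\Xi\|f-f_\Xi\|_\infty\le(4\pi Q/m)\,\mb{E}_{\Xi,\tau}\sup_x\bigl|\sum_i\tau_iF(x,\xi_i,r_i)\bigr|$ and then apply Hoeffding, conditionally on $\Xi$, to the Rademacher variables $\tau_i$---now legitimate since $|\tau_iF(x_0,\xi_i,r_i)|\le(1+|\xi_i|_1)^s/4$ is deterministic once $\Xi$ is fixed. This is exactly the paper's route: the entropy bound of Lemma~\ref{Lemma: Metric entropy Bound for empirical Rademacher complexity} \emph{is} Hoeffding plus a union over a $\delta$-net, and the covering number is computed via the same $|\xi|_1$-Lipschitz property you use. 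One further wrinkle remains: symmetrization is an inequality in \emph{expectation} over $\Xi$, so your ``isolate events (i)--(iv), then bound pointwise'' logic no longer composes cleanly. In particular your interpolation control $\sum_i(1+|\xi_i|_1)^{1+s}\le CN^{L(1+s)}$ is only a high-probability bound, whereas $\mb{E}_\Xi\sum_i(1+|\xi_i|_1)^{1+s}$ is infinite unless $f\in\ms{B}^1$. The paper sidesteps this by never leaving expectation until the very end: it keeps the Lipschitz factor inside the logarithm of the covering number and uses the deterministic inequality $\ln\bigl(\sum_i(1+|\xi_i|_1)^{1+s}\bigr)\le\tfrac{1+s}{2s}\ln\bigl(\sum_i(1+|\xi_i|_1)^{2s}\bigr)$ together with Jensen to obtain $\mb{E}_\Xi\ln\bigl(\sum_i(1+|\xi_i|_1)^{1+s}\bigr)\le\tfrac{1+s}{2s}\ln(m\|f\|_{\ms{B}^s}/Q)$, which tames the heavy tail.
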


To prove Theorem~\ref{Theorem: Linf approximation by deep NN}, we employ the well-known tool of Rademacher complexity. We recall its definition firstly.
\begin{definition}
Define for a set of random variables  $Z = \left\{Z_{i}\right\}_{i=1}^{m}$  independently distributed according to  $P$  and a function class  $\mathcal{G}$  the empirical Rademacher complexity 
$$\widehat{\mathcal{R}}_{m}(\mathcal{G},Z):=\mathbb{E}_{\tau}\left[\sup _{g \in \mathcal{G}}\left.\frac{1}{m} \sum_{i=1}^{m} \tau_{i} g\left(Z_{i}\right)\right|  Z_{1}, \cdots, Z_{m}\right],$$
where the expectation $\mathbb{E}_{\tau}$  is taken with respect to the independent uniform Bernoulli sequence  $\left\{\tau_{i}\right\}_{i=1}^{m}$  with  $\tau_{i} \in\{ \pm 1\}$. The Rademacher complexity of  $\mathcal{G}$  is defined by \[
\mathcal{R}_{m}(\mathcal{G}) = \mathbb{ E}_{Z\sim P^{m}}\left[\widehat{\mathcal{R}}_{m}(\mathcal{G})\right].
\]
\end{definition}

Let  $(E, \rho)$  be a metric space with metric  $\rho $. A  $\delta $-cover of a set  $A \subset E$  with respect to  $\rho$  is a collection of points  $\left\{z_{1}, \cdots, z_{n}\right\} \subset A$  such that for every  $z \in A $, there exists  $i \in\{1, \cdots, n\}$  such that  $\rho\left(z, z_{i}\right) \leq \delta $. The  $\delta $-covering number  $\mathcal{N}(\delta, A, \rho)$ is the cardinality of the smallest  $\delta$-cover of the set  $A$  with respect to $\rho$. Equivalently, the  $\delta $-covering number  $\mathcal{N}(\delta, A, \rho)$  is the minimal number of balls  $B_{\rho}(z, \delta)$  of radius  $\delta$  needed to cover $A$. $\ln \mathcal{N}(\delta, A, \rho)$ is often referred to as the metric entropy of $A$.

The following lemma, which is a consequence of Massart’s Lemma, will be used to bound the Rademacher complexity in terms of the metric entropy.
\begin{lemma}[{\cite[Proposition 5.2]{Patrick2020Lecture}}] \label{Lemma: Metric entropy Bound for empirical Rademacher complexity}
Let  $\mathcal{G}$  be a class of real functions,  $Z = \left\{Z_{i}\right\}_{i = 1}^{m}$  be a random $i.i.d.$ sample and the empirical measure $P_{m} = m^{-1} \sum_{i=1}^{m} \delta_{Z_{i}} $.  For $p \ge 1$, denote 
$$ \|g\|_{L^{p}(P_{m})} := \left(\frac{1}{m} \sum_{i=1}^{m} \left|g(Z_{i})\right|^{p}\right)^{1/p}.$$
Assuming $\sup _{g \in \mathcal{G}} \|g\|_{L^{2}(P_{m})} \leq c$, then 
$$\widehat{\mathcal{R}}_{m}(\mathcal{G},Z) \leq \inf _{\delta > 0}\left( \delta + \frac{\sqrt{2} c}{\sqrt{m}} \sqrt{\ln \mathcal{N}\left(\delta, \mathcal{G}, \|\cdot\|_{L^{1}(P_{m})}\right)} \right).$$    
\end{lemma}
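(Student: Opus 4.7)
The plan is to prove Lemma~\ref{Lemma: Metric entropy Bound for empirical Rademacher complexity} via the classical discretization-and-Massart argument. Roughly: approximate every $g \in \mc{G}$ by a nearest point in a minimal $\delta$-cover with respect to $\|\cdot\|_{L^1(P_m)}$, absorb the resulting residual into an additive $\delta$ error, and handle the (now finite) indexed Rademacher average via Massart's finite-class maximal inequality. Throughout the argument we condition on $Z_1,\dots,Z_m$, so both the cover and the relevant $L^p(P_m)$-norms are treated as deterministic quantities.

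In detail, fix an arbitrary $\delta > 0$ and choose a minimal $\delta$-cover $\mc{G}_\delta \subset \mc{G}$ of $\mc{G}$ with respect to the (random) metric $\|\cdot\|_{L^1(P_m)}$, so that $|\mc{G}_\delta| = \mathcal{N}\bigl(\delta,\mc{G},\|\cdot\|_{L^1(P_m)}\bigr)$. For each $g \in \mc{G}$ pick $\pi(g) \in \mc{G}_\delta$ with $\|g - \pi(g)\|_{L^1(P_m)} \le \delta$, and split
\[
\frac{1}{m}\sum_{i=1}^{m}\tau_i g(Z_i) = \frac{1}{m}\sum_{i=1}^{m}\tau_i\bigl(g(Z_i)-\pi(g)(Z_i)\bigr) + \frac{1}{m}\sum_{i=1}^{m}\tau_i\pi(g)(Z_i).
\]
Since $|\tau_i|=1$, the first term is controlled pointwise in $\tau$ by
\[
\Bigl|\frac{1}{m}\sum_{i=1}^{m}\tau_i\bigl(g(Z_i)-\pi(g)(Z_i)\bigr)\Bigr| \le \|g-\pi(g)\|_{L^1(P_m)} \le \delta,
\]
uniformly over $g\in\mc{G}$. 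Taking $\sup_{g\in\mc{G}}$ on both sides of the decomposition therefore yields
\[
\sup_{g \in \mc{G}}\frac{1}{m}\sum_{i=1}^{m}\tau_i g(Z_i) \le \delta + \max_{g' \in \mc{G}_\delta}\frac{1}{m}\sum_{i=1}^{m}\tau_i g'(Z_i).
\]

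For the remaining finite maximum, apply Massart's lemma to the vectors $v(g') = (g'(Z_1),\dots,g'(Z_m))\in\mb{R}^m$ indexed by $g'\in\mc{G}_\delta$. Because the cover lies in $\mc{G}$, the hypothesis $\|g'\|_{L^2(P_m)}\le c$ gives $\|v(g')\|_2 = \sqrt{m}\,\|g'\|_{L^2(P_m)} \le c\sqrt{m}$, so Massart's bound reads
\[
\mb{E}_\tau\!\Bigl[\max_{g' \in \mc{G}_\delta}\frac{1}{m}\sum_{i=1}^{m}\tau_i g'(Z_i)\Bigr] \le \frac{c\sqrt{m}\,\sqrt{2\ln|\mc{G}_\delta|}}{m} = \frac{\sqrt{2}\,c}{\sqrt{m}}\sqrt{\ln\mathcal{N}\bigl(\delta,\mc{G},\|\cdot\|_{L^1(P_m)}\bigr)}.
\]
Taking $\mb{E}_\tau$ in the previous display and inserting this estimate yields the bound at that particular $\delta$; finally, taking the infimum over $\delta > 0$ produces the lemma. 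The argument is essentially routine; the only mild subtlety is insisting on a \emph{proper} cover $\mc{G}_\delta \subset \mc{G}$ (so that $\|g'\|_{L^2(P_m)} \le c$ transfers to the vertices needed by Massart), which, if necessary, can be arranged by replacing an external $\delta$-cover by a proper $2\delta$-cover without affecting the structural form of the inequality.
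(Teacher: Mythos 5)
Your argument is correct and is exactly the standard route the paper points to: the paper does not prove this lemma itself but cites it as a consequence of Massart's lemma, and your one-step discretization (split off the $\delta$-approximation error in $\|\cdot\|_{L^1(P_m)}$, then apply Massart's finite-class bound to the cover with $\|v(g')\|_2\le c\sqrt m$) is precisely that proof. Note also that the paper's definition of the $\delta$-covering number already requires the cover points to lie in the set itself, so the proper-cover caveat you raise at the end is automatically satisfied and no $2\delta$ adjustment is needed.
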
 

\begin{proof}[Proof of Theorem~\ref{Theorem: Linf approximation by deep NN}]
Still we use $f$ to denote the extension function on the entire region $\mb{R}^d$ that attains the infimum of the norm. If $N^{2sL} \le 2^{10} \|f\|_{\ms{B}^s(\Omega)} /Q$, where $Q=\int_{\mb{R}^d}(1+\abs{\xi}_1)^{-s}\abs{\widehat{f}(\xi)}\mathrm{d}\xi$, then it follows from Hausdorff-Young inequality and Cauchy's inequality that \[
\|f\|_{L^{\infty}(\Omega)}^{2} \le \|f\|_{\ms{B}^{0}(\Omega)}^{2} \le Q\|f\|_{\ms{B}^s(\Omega)},
\]
which immediately implies 
\[
\|f\|_{L^{\infty}(\Omega)} \le 32\|f\|_{\ms{B}^s(\Omega)} N^{-sL}.
\]
Therefore, the choice of $f_N = 0$ suffices for our purpose. 

It remains to prove the approximation error bound for $\|f\|_{\ms{B}^s(\Omega)} /Q < 2^{-10}N^{2sL}$.
For some $\tilde{c}>0$ to be chosen later on, we set $m = \lceil \tilde{c} QN^{2sL}/\|f\|_{\ms{B}^s(\Omega)}\rceil$. Hence,
\begin{equation} \label{ineq: upper bound for m}
\begin{aligned}
\frac{m \|f\|_{\ms{B}^s(\Omega)}}{Q}\le\tilde{c}N^{2sL}+\frac{\|f\|_{\ms{B}^s(\Omega)}}{Q} \le (\tilde{c} + 2^{-10}) N^{2sL}.
\end{aligned}
\end{equation}
Recall that we have derived the expectation representation~\eqref{eq: expectation representation of f for DNN approximation} for $f$. %under the probability distribution $\mu$. in the proof of Theorem \ref{Theorem: Lp approximation by deep NN}.
We draw random samples $\Xi = \{(\xi_i,r_i)\}_{i=1}^m$ and construct unbiased estimate $f_{\Xi}$ as in the proof of Theorem~\ref{Theorem: Lp approximation by deep NN}.
For $x \in \Omega$, define measurable function $g_{x}(\xi, r): \mb{R}^{d}\times[0,1] \to \mb{R}$ by $g_{x}(\xi, r) = F(x,\xi,r).$ Denote the function sets $G=\left\{g_{x} : x \in \Omega\right\}$ and $\bar{G} = G \cup (-G).$ Then, by standard symmetrization,
\begin{equation} \label{ineq: symmetrization}
\begin{aligned}
\mb{E}_{\Xi}\|f-f_{\Xi}\|_{L^{\infty}(\Omega)} 
& = 2\pi Q \mb{E}_{\Xi} \sup_{x \in \Omega} \left|\dfrac1m\sum_{i=1}^m g_{x}(\xi_{i}, r_{i}) - \mb{E}_{(\xi,r)\sim\mu}g_{x}(\xi, r)\right| \\
& \leq \frac{4\pi Q}{m} \mb{E}_{\Xi,\tau} \sup_{g \in G} \left|\sum_{i=1}^m \tau_{i} g(\xi_{i}, r_{i})\right|\le 4\pi Q \mb{E}_{\Xi} \widehat{\mathcal{R}}_{m}(\bar{G},\Xi), 
%&\leq \frac{4\pi Q}{m} \mb{E}_{\Xi} \sup_{g \in \bar{G}} \sum_{i=1}^m \tau_{i} g(\xi_{i}, r_{i})= \\
\end{aligned}
\end{equation}
where $\tau_{i}$ are i.i.d. random variables with $\mb{P}(\tau_{i}= \pm 1)  = 1/2$, independent of $\Xi$. Taking the empirical measure $P_{m} = m^{-1} \sum_{i=1}^{m} \delta_{(\xi_i,r_i)}$ and noting that
\begin{align*}
\sup _{g \in \bar{G}}\|g\|_{L^{2}(P_{m})}^2 & = \sup _{x \in \Omega}\frac{1}{m}\sum_{i=1}^m (1+\abs{\xi_i}_1)^{2s}[\gamma_{,n_{\xi_{i}}}(t_{\xi_{i}}(x),r_{i})]^{2}\leq \frac{1}{16m}\sum_{i=1}^m (1+\abs{\xi_i}_1)^{2s}.
\end{align*}
By Lemma~\ref{Lemma: Metric entropy Bound for empirical Rademacher complexity}, we obtain for any $\delta > 0$, 
\begin{equation} \label{ineq: empirical complexity bound}
\widehat{\mathcal{R}}_{m}(\bar{G},\Xi) \leq  \delta + \frac{\sqrt{2}}{4\sqrt{m}} \left(\frac{1}{m}\sum_{i=1}^m (1+\abs{\xi_i}_1)^{2s} \right)^{1/2} \sqrt{\ln[2\mathcal{N}\!(\delta, G, \|\cdot\|_{L^{1}(P_{m})})]} ,
\end{equation}
where we have used the relation $\mathcal{N}(\delta, \bar{G}, \|\cdot\|_{L^{1}(P_{m})}) \leq 2 \mathcal{N}(\delta, G, \|\cdot\|_{L^{1}(P_{m})})$ for $\bar{G} = G \cup (-G)$.

Since $\gamma_{,n_\xi}(t_\xi(\cdot),r)$ is a $\abs{\xi}_1$-Lipschitz continuous function, i.e., 
\[
\left|\gamma_{,n_\xi}(t_\xi(x),r) - \gamma_{,n_\xi}(t_\xi(y),r)\right| \leq \abs{\xi}_1 \abs{x-y}_{\infty}.
\]
Hence, for any $x,y\in \Omega$, 
\begin{equation*}
\begin{aligned}
\|g_{x} - g_{y}\|_{L^{1}(P_{m})} & = \frac{1}{m}\sum_{i=1}^m (1+\abs{\xi_i}_1)^{s}\left|\gamma_{,n_{\xi_{i}}}(t_{\xi_{i}}(x),r_{i}) - \gamma_{,n_{\xi_{i}}}(t_{\xi_{i}}(y),r_{i})\right| \\
& \leq \frac{1}{m}\sum_{i=1}^m (1+\abs{\xi_i}_1)^{1+s} \abs{x-y}_{\infty}.
\end{aligned}
\end{equation*}
This gives
\begin{equation*}
\begin{aligned}
\mathcal{N}\left(\delta,G, \|\cdot\|_{L^{1}(P_{m})}\right)&\leq\mathcal{N}\left(\delta m \left(\sum_{i=1}^m (1+\abs{\xi_i}_1)^{1+s}\right)^{-1}, \Omega, \abs{\cdot}_{\infty}\right)\\
&\leq\left\lceil \frac{1}{2\delta m} \sum_{i=1}^m (1+\abs{\xi_i}_1)^{1+s}\right\rceil^{d}.
\end{aligned}
\end{equation*}
Taking $\delta = (60\sqrt{m})^{-1}$ and substituting the above estimate into \eqref{ineq: empirical complexity bound}, we obtain
\[
\widehat{\mathcal{R}}_{m}(\bar{G},\Xi) %& \leq \delta + \frac{\sqrt{2}}{4\sqrt{m}} \mb{E}_{\Xi}\left(\frac{1}{m}\sum_{i=1}^m \nm{\xi_{i}}{1}^{2s} \right)^{1/2} \sqrt{\ln 2 + d \ln  \left(\left\lceil \frac{1}{2\delta m} \sum_{i=1}^m \abs{\xi_i}_1^{1+s}1_{\{\abs{\xi_i}_1 \le N_{0}^{L}\}}\right\rceil\right)}  \\
\le \delta + \frac{\sqrt{2}}{4\sqrt{m}} \sqrt{\frac{1}{m}\sum_{i=1}^m (1\!+\!\abs{\xi_i}_1)^{2s} }  \sqrt{\ln 2 + d \ln\! \left( \frac{31}{\sqrt{m}} \sum_{i=1}^m (1\!+\!\abs{\xi_i}_1)^{1+s}\right)}. 
\]
Note that $2s/(1+s)<1$, 
\begin{equation*}
\begin{aligned}
\mb{E}_{\Xi}\ln \left( \sum_{i=1}^m (1+\abs{\xi_i}_1)^{1+s}\right) & \le  \frac{1+s}{2s} \mb{E}_{\Xi}\ln \left( \sum_{i=1}^m (1+\abs{\xi_i}_1)^{2s} \right)\\
%&\le \ \frac{1+s}{2s} \ln \left(\mb{E}_\Xi\sum_{i=1}^m(1+\abs{\xi_i}_1)^{2s} \right) \\
&\le \frac{1+s}{2s} \ln \left(m \frac{\|f\|_{\ms{B}^s(\Omega)}}{Q} \right) ,
\end{aligned}
\end{equation*}
where the second line follows from Jensen's inequality and \[
\mb{E}_{\Xi}\sum_{i=1}^m(1+\abs{\xi_i}_1)^{2s} = m\|f\|_{\ms{B}^s(\Omega)} /Q.
\]
Therefore, by Cauchy's inequality, 
%\[ \mb{E}XY \le (\mb{E}X^{2})^{1/2}(\mb{E}Y^{2})^{1/2}, \]
we get 
\begin{equation*}
\begin{aligned}
\mb{E}_{\Xi} \widehat{\mathcal{R}}_{m}(\bar{G},\Xi) %& \leq \delta + \frac{\sqrt{2}}{4\sqrt{m}} \mb{E}_{\Xi}\left(\frac{1}{m}\sum_{i=1}^m \nm{\xi_{i}}{1}^{2s} \right)^{1/2} \sqrt{\ln 2 + d \ln  \left(\left\lceil \frac{1}{2\delta m} \sum_{i=1}^m \abs{\xi_i}_1^{1+s}1_{\{\abs{\xi_i}_1 \le N_{0}^{L}\}}\right\rceil\right)}  \\
& \le \frac{1}{60\sqrt{m}} + \frac{\sqrt{2 \mb{E}_{\xi} (1\!+\!\abs{\xi}_1)^{2s}}}{4\sqrt{m}}  \sqrt{\ln 2 + d \mb{E}_{\Xi}\ln  \left( \frac{31}{\sqrt{m}} \sum_{i=1}^m (1\!+\!\abs{\xi_i}_1)^{1+s}\right)} \\
& \le \frac{1}{60\sqrt{m}} + \frac{\sqrt{2d \|f\|_{\ms{B}^s(\Omega)}}}{4\sqrt{Q m}}  \sqrt{\ln 62 + \frac{\ln m}{2s} + \frac{1+s}{2s} \ln \left(\frac{\|f\|_{\ms{B}^s(\Omega)}}{Q} \right) } .
\end{aligned}
\end{equation*}
By definition, 
$\|f\|_{\ms{B}^s(\Omega)} /Q \ge 1.$
Hence, by $1/15 + \sqrt{ 2\ln 62} C \leq  \sqrt{2.094 \ln 62} C$ for $C\ge 1$, we absorb the first term into the second one and get
\[
\mb{E}_{\Xi} \widehat{\mathcal{R}}_{m}(\bar{G},\Xi) %& \leq  \frac{1}{60\sqrt{m}} + \frac{\sqrt{2d}}{4\sqrt{m}} \left(\frac{\|f\|_{\ms{B}^s(\Omega)}}{Q}\right)^{1/2} \sqrt{\ln 62 + \frac{\ln m}{2s} + \frac{1+s}{2s} \ln \left(\frac{\|f\|_{\ms{B}^s(\Omega)}}{Q} \right) } \\
\leq \frac{\sqrt{2.1 d \|f\|_{\ms{B}^s(\Omega)}}}{4\sqrt{Q m}}  \sqrt{\ln 62 + \frac{\ln m}{2s} + \frac{1+s}{2s} \ln \left(\frac{\|f\|_{\ms{B}^s(\Omega)}}{Q} \right) }  . 
\]
Substituting the above estimate into \eqref{ineq: symmetrization}, we obtain 
\begin{equation*}
\begin{aligned}
\mb{E}_{\Xi}\|f-f_{\Xi}\|_{L^{\infty}(\Omega)} %& \leq  \frac{\pi \sqrt{2.27d \upsilon_{f,s,\Omega} \upsilon_{f,-s,\Omega} }}{\sqrt{m}} \sqrt{\ln 26 + \frac{\ln m}{2s} + \frac{1+s}{2s} \ln \left(\frac{\upsilon_{f,s,\Omega}}{\upsilon_{f,-s,\Omega}} \right) } \\
& \leq  \frac{\pi \sqrt{2.1 d \|f\|_{\ms{B}^s(\Omega)} Q }}{\sqrt{2s m}} \sqrt{\ln 62m + (1+s) \ln \left(\frac{\|f\|_{\ms{B}^s(\Omega)}}{Q} \right) }   .
\end{aligned}
\end{equation*}
By Markov's inequality, with probability at least $(1+\varepsilon)/(2+\varepsilon)$, for some $\varepsilon>0$ to be chosen later on, we have
\begin{equation}\label{eq: Linf error}\begin{aligned}
\|f-f_{\Xi}\|_{L^{\infty}(\Omega)} & \le   \frac{\pi (2+\varepsilon)\sqrt{2.1d \|f\|_{\ms{B}^s(\Omega)} Q }}{\sqrt{2s m}} \sqrt{\ln 62m + \frac{3}{2} \ln \left(\frac{\|f\|_{\ms{B}^s(\Omega)}}{Q} \right) } \\
& \le  \frac{\pi (2+\varepsilon)\sqrt{6.3d}\|f\|_{\ms{B}^s(\Omega)}}{2\sqrt{s\tilde{c}}N^{sL}} \sqrt{\ln \left(62^{2/3}(\tilde{c}+2^{-10})N^{2sL} \right) },\\
&\le \frac{\pi (2+\varepsilon)\sqrt{6.3dL}\|f\|_{\ms{B}^s(\Omega)}}{\sqrt{2\tilde{c}}N^{sL}} \sqrt{\ln \left(c_0N \right) },
%& \le  \frac{\pi (2+\varepsilon)\sqrt{6.3d}\|f\|_{\ms{B}^s(\Omega)}}{2\sqrt{s \tilde{c}}N^{sL}} \sqrt{\ln N} .
%& \le   \frac{32 \|f\|_{\ms{B}^s(\Omega)}}{N^{sL}} \sqrt{\frac{d \ln N}{s}} . 
\end{aligned}\end{equation}
where $c_0^{2sL}:=62^{2/3}(\tilde{c}+2^{-10})$ and the second inequality follows from~\eqref{ineq: upper bound for m}. Setting $\tilde{c} = 16.26^{-1}$ and $\varepsilon=10^{-4}$, we obtain~\eqref{eq: Linf approximation error by deep NN} since $c_0<1$.

It remains to calculate the number of units in each layer that whether $N_\Xi\le N$. 
Since the construction of $f_{\Xi}$ is the same as in the proof of Theorem~\ref{Theorem: Lp approximation by deep NN}, by~\eqref{eq: width bound} and~\eqref{ineq: upper bound for m}, we obtain, with probability at least $(1+\varepsilon)/(2+\varepsilon)$,
\begin{equation}\label{ineq: width bound, Linf DNN}
N_{\Xi}^{2sL} \le 8 (2+\varepsilon)\frac{m\|f\|_{\ms{B}^s(\Omega)}}{Q} \le 8 (2+\varepsilon) (\tilde{c} + 2^{-10}) N^{2sL}\le N^{2sL}.
\end{equation}
A combination of~\eqref{eq: Linf error} and~\eqref{ineq: width bound, Linf DNN} yields the desired estimate with probability at least $\varepsilon/(2+\varepsilon)$, which completes the proof. 
\end{proof}

As before, we derive an error bound involving only the seminorm $\upsilon_{f,s}$. Notably, in this proof, the intermediate prefactor $c_0$ of the logarithmic factor tends to zero as $sL \to 0$. This reflects a modest improvement in the logarithmic factor, consistent with insights from the theory of real interpolation.

%To this end, we recall an approximation bound from \cite{Barron:2018}*{Theorem 2}.
%\begin{lemma}[{\cite{Barron:2018}}] \label{Lemma: shallow ReLU Linf app B^2, by Barron2018}
%Let $f \in \ms{B}^{2}(\mb{R}^d)$ and $C$ be a universal constant. For any positive integer $N$, there exists a $(1,N+2)$-network $f_N$ such that 
%$$\|f-f_N\|_{L^{\infty}(\Omega)} \le C \upsilon_{f,2} \sqrt{d+\ln N} N^{-1/2-1/d}.$$
%\end{lemma}
%
\begin{corollary} \label{Corollary: Linf approximation bounds with spectral semi-norm DNN}
Let $L\in \mathbb{N}_{+}$ and $f\in\ms{B}^s(\Omega)$ with $0<sL\le 1/2$. For any positive integer $N$, there exists an $(L,N)$-network $f_{L,N}$ such that
\begin{equation}\label{eq:coroLinf}
\|f-f_{L,N}\|_{L^{\infty}(\Omega)} \le   C\upsilon_{f,s}\Bigl(\frac{1+dL\ln N}{N}\Bigr)^{sL},
\end{equation}
where $C$ is a universal constant. 
\end{corollary}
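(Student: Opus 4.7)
The proof mirrors that of Corollary~\ref{Corollary: approximation bounds with spectral semi-norm DNN}. I decompose $f = f_1 + f_2$ at the frequency threshold $|\xi|_1 = 1$ as in~\eqref{eq: decompose f into low and high frequency parts}. Using $|\xi|_1 \le |\xi|_1^s$ on $\{|\xi|_1 < 1\}$ for $s \le 1$, I obtain $\upsilon_{f_1,1} \le \upsilon_{f_1,s} \le \upsilon_{f,s}$, so $f_1 \in \ms{B}^1(\Omega)$ with a controlled seminorm. Similarly, using $(1+|\xi|_1)^s \le 2^s |\xi|_1^s$ on $\{|\xi|_1 \ge 1\}$, I obtain $\|f_2\|_{\ms{B}^s(\Omega)} \le \sqrt{2}\,\upsilon_{f,s}$. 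These two bookkeeping inequalities replace the direct use of $\|f\|_{\ms{B}^s(\Omega)}$ by the seminorm.

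Next, I split the budget as $N_1,N_2 \sim N/2$, absorbing a small constant to accommodate the identity padding $t = \operatorname{ReLU}(t) - \operatorname{ReLU}(-t)$ used to embed a shallow network inside an $(L,N)$-network (exactly as in the proof of Corollary~\ref{Corollary: approximation bounds with spectral semi-norm DNN}). Applying Lemma~\ref{lemma:Linf} to $f_1$ produces $\|f_1 - f_{1,L,N_1}\|_{L^\infty(\Omega)} \le C\sqrt{d}\,\upsilon_{f,s}\,N_1^{-1/2}$, and applying Theorem~\ref{Theorem: Linf approximation by deep NN} to $f_2$ produces $\|f_2 - f_{2,L,N_2}\|_{L^\infty(\Omega)} \le C\,\upsilon_{f,s}\,N_2^{-sL}\sqrt{1+dL\ln N_2}$. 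Adding these via the triangle inequality, and using $N^{-1/2}\le N^{-sL}$ together with $\sqrt{d}\le\sqrt{1+dL\ln N}$ to absorb the low-frequency error, gives a bound of the form $C\,\upsilon_{f,s}\,N^{-sL}\sqrt{1+dL\ln N}$.

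The subtle and final step is to sharpen the prefactor from $\sqrt{1+dL\ln N}$ to $(1+dL\ln N)^{sL}$, as claimed in~\eqref{eq:coroLinf}. As pointed out in the paragraph preceding the corollary, this improvement comes from revisiting the Rademacher-complexity step inside the proof of Theorem~\ref{Theorem: Linf approximation by deep NN}: the intermediate constant $c_0^{2sL} = 62^{2/3}(\tilde c + 2^{-10})$ can be made strictly less than~$1$, so that $c_0 \to 0$ as $sL\to 0$, and a careful re-optimization of the free parameters $\tilde c$ and $\varepsilon$ in tandem with this shrinkage converts the $\sqrt{\ln N}$ factor into $(\ln N)^{sL}$. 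This is precisely the real-interpolation trade-off alluded to in the text: the logarithmic exponent interpolates between $1/2$ (at $sL = 1/2$) and $0$ (as $sL\to 0$) in sync with the smoothness. I expect the main obstacle to be the bookkeeping of constants, namely balancing the growth of $1/\sqrt{\tilde c}$ against the decay of $\sqrt{\ln(c_0 N)}$ so that the combined prefactor remains universal and the resulting bound is valid in the full range $0 < sL \le 1/2$, without spurious $d$- or $L$-dependence.
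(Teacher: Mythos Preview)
Your first two paragraphs correctly reproduce the paper's \emph{first} step: splitting at $|\xi|_1=1$ and combining Lemma~\ref{lemma:Linf} with Theorem~\ref{Theorem: Linf approximation by deep NN} yields the intermediate bound
\[
\|f-f_{L,N}\|_{L^\infty(\Omega)}\le C\,\upsilon_{f,1/(2L)}\Bigl(\tfrac{1+dL\ln N}{N}\Bigr)^{1/2},
\]
valid for $f\in\ms{B}^{1/(2L)}(\Omega)$; this is exactly~\eqref{eq:coroLinf1} in the paper.

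The gap is in your third paragraph. The sharpening from $\sqrt{1+dL\ln N}$ to $(1+dL\ln N)^{sL}$ does \emph{not} come from re-tuning $\tilde c$ and $\varepsilon$ inside the Rademacher estimate. In that proof the square root on the logarithm arises from the metric-entropy bound in Lemma~\ref{Lemma: Metric entropy Bound for empirical Rademacher complexity}, and no choice of the free constants changes that exponent; the observation that $c_0\to 0$ as $sL\to 0$ is only a hint that interpolation is the right mechanism, not the mechanism itself. What the paper actually does is a \emph{second} frequency decomposition at the $N$-dependent threshold
\[
R=N^{L}(1+dL\ln N)^{-L},
\]
so that the low-frequency part $f_1$ lies in $\ms{B}^{1/(2L)}(\Omega)$ with $\upsilon_{f_1,1/(2L)}\le R^{1/(2L)-s}\upsilon_{f,s}$, and one applies~\eqref{eq:coroLinf1} (i.e.\ the case $sL=1/2$, where the square root is already the correct power) to $f_1$; the high-frequency part $f_2$ is handled trivially by $\|f_2\|_{L^\infty}\le\upsilon_{f_2,0}\le R^{-s}\upsilon_{f,s}$. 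Both pieces then come out exactly as $C\upsilon_{f,s}\bigl((1+dL\ln N)/N\bigr)^{sL}$. This is the real-interpolation (``$K$-method'') step you allude to, but it requires an additional decomposition with a carefully chosen cutoff, which your proposal does not supply.
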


\begin{proof}
Recalling the frequency decomposition in~\eqref{eq: decompose f into low and high frequency parts}, and we approximate the low-frequency component using Lemma~\ref{lemma:Linf}, and the high-frequency component using Theorem~\ref{Theorem: Linf approximation by deep NN}, respectively. Proceeding along a similar approach that leads to Corollary~\ref{Corollary: approximation bounds with spectral semi-norm DNN}, we obtain, for $f\in\ms{B}^{1/(2L)}(\Omega)$, there exists an $(L,N)$-network $f_{L,N}$ such that
\begin{equation}\label{eq:coroLinf1}
\|f-f_{L,N}\|_{L^{\infty}(\Omega)} \le C\upsilon_{f,1/(2L)} \Lr{\dfrac{1+dL\ln N}{N}}^{1/2}.
\end{equation}
The details are omitted for brevity.

Next for general $f\in\ms{B}^s(\Omega)$ with $0<sL\le 1/2$, we decompose $f=f_1+f_2$ with
\[
f_1(x)=\operatorname{Re}\int_{\abs{\xi}_1<R}\wh{f}(\xi)e^{2\pi i\xi\cdot x}\mr{d}\xi,\quad f_2(x)=\operatorname{Re}\int_{\abs{\xi}_1\ge R}\wh{f}(\xi)e^{2\pi i\xi\cdot x}\mr{d}\xi,
\]
where $R:=N^L(1+dL\ln N)^{-L}$. It is straightforward to verify
\[\wh{f}_1(\xi) =\frac{1}{2}[\wh{f}(\xi)+\wh{f}(-\xi)^{*}]\chi_{[0,R)}(\abs{\xi}_1), \qquad\wh{f}_2(\xi)=\frac{1}{2}[\wh{f}(\xi)+\wh{f}(-\xi)^{*}]\chi_{[R,\infty)}(\abs{\xi}_1),
\]
where $z^{*}$ denotes the complex conjugate of $z$. Hence $f_1\in\ms{B}^{1/(2L)}(\Omega)$ because
\begin{align*}
\upsilon_{f_1,1/(2L)}&=\int_{\abs{\xi}_1<R}\abs{\xi}_1^{1/(2L)}\abs{\wh{f}(\xi)}\mr{d}\xi\le R^{1/(2L)-s}\int_{\mb{R}^d}\abs{\xi}_1^{s}\abs{\wh{f}(\xi)}\mr{d}\xi\\
&=N^{1/2-sL}(1+dL\ln N)^{sL-1/2}\upsilon_{f,s}.
\end{align*}
By~\eqref{eq:coroLinf1}, there exists an $(L,N)$-network $f_{L,N}$ such that
\[
\|f_1-f_{L,N}\|_{L^{\infty}(\Omega)} \le   C\upsilon_{f_1,1/(2L)} \Lr{\frac{1+dL\ln N}{N}}^{1/2}\le C\upsilon_{f,s}\Bigl(\frac{1+dL\ln N}{N}\Bigr)^{sL}.
\]
Meanwhile, using the Hausdorff-Young inequality,
\[
\nm{f_2}{L^\infty(\Omega)}\le\upsilon_{f_2,0}=\int_{\abs{\xi}_1\ge R}\abs{\wh{f}(\xi)}\mr{d}\xi\le\frac{1}{R^{s}}\int_{\mb{R}^d}\abs{\xi}_1^s\abs{\wh{f}(\xi)}\mr{d}\xi=\upsilon_{f,s}\Bigl(\frac{1+dL\ln N}{N}\Bigr)^{sL}.
\]
By the triangle inequality and the above two inequalities, we obtain~\eqref{eq:coroLinf}.
\end{proof}

Proceeding along the same line that leads to~\eqref{eq:coroLinf}, we may improve Corollary~\ref{Corollary: approximation bounds with spectral semi-norm DNN} to
\[
\nm{f-f_{L,N}}{L^p(\Omega)} \le   C\upsilon_{f,s}\Bigl(\dfrac{p}{N}\Bigr)^{sL}.
\]
We omit the details for brevity.

\subsection{Lower bounds} 
In what follows, we show that the $L^{p}$-approximation rates we obtained in Section~\ref{Section: L^p approximation by shallow NN} and Section~\ref{Section: deep neural network approximation} are sharp, and the uniform approximation rate in Section~\ref{Section: deep neural network approximation} is sharp up to a logarithmic factor. This example is adopted from~\cite[Theorem 2]{BreslerNagaraj:2020} for the sharpness of $L^2$-approximation. We adopt it here for $L^p$-approximation with $1\le p\le\infty$.  %For readers' convenience, we reserve the proof here.
\begin{theorem}\label{Theorem: lower bound for DNN approximation}
Let $1\le p\le\infty$. For any fixed positive integers $L,N$ and real numbers $\varepsilon,s$ with $0<\varepsilon,sL\le 1/2$, there exists $f\in\ms{B}^{s}(\Omega)$ satisfying $\upsilon_{f,s}\le1+\varepsilon$ and $\|f\|_{\ms{B}^{s}(\Omega)} \le 2+\varepsilon$ such that for any deep ReLU or Heaviside neural network $f_{L,N}$ with $L$ hidden layer and $N$ units each layer,
\begin{equation}\label{eq:lower}
\|f-f_{L,N}\|_{L^p(\Omega)}\ge\dfrac{1-\varepsilon}{4\sqrt{2}\pi N^{sL}}.
\end{equation}
\end{theorem}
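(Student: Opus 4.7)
The plan is to exhibit a single hard function $f\in\ms{B}^s(\Omega)$ that is a scaled high-frequency cosine wave along one coordinate, and to show via a Fourier-duality argument that no $(L,N)$-network can approximate it in $L^p(\Omega)$ better than the stated rate. Concretely, I would take $f(x)=\alpha\cos(2\pi M x_1)$ on $\Omega$ for an integer $M$ of order $N^L$ and amplitude $\alpha$ of order $M^{-s}$, certified via the smooth extension $f_e=\alpha\cos(2\pi M x_1)\phi$, where $\phi$ is a fixed tensor-product bump equal to $1$ on $\Omega$. Since $\wh{f_e}$ is the sum of two shifted copies of $\wh\phi$ centred at $\pm M e_1$, the subadditivity $|\pm M e_1+\eta|_1^s\le M^s+|\eta|_1^s$ (valid for $s\le 1/2$) yields $\upsilon_{f_e,s}\le\alpha(M^s\|\wh\phi\|_{L^1(\mb{R}^d)}+\upsilon_{\phi,s})$ and $\upsilon_{f_e,0}=\alpha\|\wh\phi\|_{L^1(\mb{R}^d)}$; choosing $\alpha=(1+\varepsilon)/(M^s\|\wh\phi\|_{L^1(\mb{R}^d)})$ and $M$ sufficiently large then gives $\upsilon_{f,s}\le 1+\varepsilon$ and $\|f\|_{\ms{B}^s(\Omega)}\le 2+\varepsilon$.

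The second step is to bound the piecewise complexity of the approximant along lines parallel to $e_1$. For any fixed $x'\in[0,1]^{d-1}$, the slice $t\mapsto f_{L,N}(t,x')$ is itself an $(L,N)$ ReLU (resp.\ Heaviside) network with one-dimensional input, so a straightforward induction on depth---each ReLU at most doubles the piece count, and each linear combination introduces at most the union of breakpoints---shows that the slice has at most $K:=(2N)^L$ pieces, uniformly in $x'$.

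The third ingredient is the Fourier estimate. Since $|f|\le\alpha$, replacing $g:=f_{L,N}$ by its clip $\tilde g:=\operatorname{clip}(g,-\alpha,\alpha)$ does not increase the $L^p(\Omega)$-distance to $f$ and at most triples the piece count on each fibre, yielding a piecewise linear/constant function $\tilde g(\cdot,x')$ with $\le 3K$ pieces and $|\tilde g|\le\alpha$. On each piece $I_j$ of length $\ell_j$, combining the trivial bound $|\int_{I_j}\tilde g\cos|\le\alpha\ell_j$ with one integration by parts (whose slope bound $|m_j|\le 2\alpha/\ell_j$ comes from boundedness of $\tilde g$) produces $|\int_{I_j}\tilde g\cos(2\pi M t)\,\mr{d}t|\le 2\alpha/(\pi M)$, regardless of $\ell_j$. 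Summing over the $\le 3K$ pieces,
\[
\left|\int_0^1\tilde g(t,x')\cos(2\pi M t)\,\mr{d}t\right|\le\frac{6K\alpha}{\pi M},
\]
while $\int_0^1\alpha\cos^2(2\pi M t)\,\mr{d}t=\alpha/2$ for integer $M$. Integrating over $x'$ and applying H\"older's inequality with $\|\cos(2\pi M x_1)\|_{L^{p'}(\Omega)}\le 1$,
\[
\|f-f_{L,N}\|_{L^p(\Omega)}\ge\|f-\tilde g\|_{L^p(\Omega)}\ge\frac{\alpha}{2}-\frac{6K\alpha}{\pi M}.
\]
Choosing $M$ to be the smallest integer at least $24K/\pi$ makes the right-hand side at least $\alpha/4$, and substituting $\alpha=(1+\varepsilon)/(M^s\|\wh\phi\|_{L^1(\mb{R}^d)})$ together with $2^{sL}\le\sqrt 2$ (since $sL\le 1/2$) yields a lower bound of the claimed form $(1-\varepsilon)/(4\sqrt 2\pi\,N^{sL})$.

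The main obstacle is reconciling the three universal constants---the Barron-extension factor $\|\wh\phi\|_{L^1(\mb{R}^d)}$, the piece-counting factor $2^L$, and the Fourier-integration constant from integration by parts---so that they assemble to the explicit prefactor $1/(4\sqrt 2\pi)$ in~\eqref{eq:lower}. The clipping step is crucial here: without it, pieces of very small length carrying correspondingly large slope would destroy the per-piece Fourier estimate, so controlling $\|\tilde g\|_{L^\infty}\le\alpha$ is what allows the uniform bound $2\alpha/(\pi M)$ to hold regardless of $\ell_j$.
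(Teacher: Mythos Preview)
Your strategy is sound and would yield the correct rate $N^{-sL}$, but it takes a different route from the paper and, as written, will not reach the stated dimension-free constant $(1-\varepsilon)/(4\sqrt 2\pi)$. The paper uses the same hard function idea---a high-frequency cosine in $x_1$ damped by a smooth envelope---but chooses the envelope to be the Gaussian $e^{-\pi|x|^2/R}$, which satisfies $\|\wh\phi\|_{L^1(\mb{R}^d)}=1$ for every $d$ and $R$ (this is the content of Lemma~\ref{lema:decay}). Your tensor-product bump equal to $1$ on $\Omega$ has $\|\wh\phi\|_{L^1(\mb{R}^d)}=\|\wh\psi\|_{L^1(\mb{R})}^d$ with $\|\wh\psi\|_{L^1(\mb{R})}\ge 1$; since this factor sits in the denominator of your amplitude $\alpha$, the lower bound you obtain degrades exponentially in $d$. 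Switching to the Gaussian (and accepting that $f=\alpha\cos(2\pi M x_1)e^{-\pi|x|^2/R}$ on $\Omega$, using $e^{-\pi|x|^2/R}\ge 1-\varepsilon$ in the duality step) removes this obstruction entirely.

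For the lower bound itself the paper bypasses Fourier testing and clipping: it partitions $[0,1]$ into $n=2^{L+2}N^L$ subintervals and invokes Telgarsky's oscillation lemma to guarantee that on at least half of them the one-dimensional slice $f_{L,N}(\cdot,\hat x)$ keeps constant sign; on each such interval the cosine has a full half-period of the opposite sign, giving an $L^1$-error of at least $(1-\varepsilon)/(\pi n^{s+1})$ directly. Summing and using $\|\cdot\|_{L^p(\Omega)}\ge\|\cdot\|_{L^1(\Omega)}$ then produces the explicit constant with no further bookkeeping. Your Fourier-duality plus integration-by-parts argument is a legitimate alternative and arguably more flexible, but the sign-counting route is what makes the prefactor $1/(4\sqrt 2\pi)$ fall out cleanly.
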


There are approximation lower bounds of neural networks to Barron functions provided in \cites{Makovoz1996Random,Barron:2018,SiegelXu:2022} for SNN and \cite{Achour2022generalLplbd} for DNN. Their proofs share the key idea of comparing the metric entropy of the target space and the network class, leading to the rates $1/2+\mathcal{O}(s/d)$ for SNN and $1+\mathcal{O}(s/d)$ for DNN (up to logarithmic factors) with respect to the width of the network. For small $s$, this approach suggests the limit of $1/2$ for the approximation rate, but they are not sharp in this range. We prove a sharp rate $sL$ as $0<sL\leq1/2$ by accurately identifying the worst case scenario and leveraging the idea of oscillation counting. 

To prove Theorem~\ref{Theorem: lower bound for DNN approximation}, we cite a useful lemma that estimates the spectral norm of a series of Schwartz functions.
\begin{lemma}[{\cite[Lemma SM6.1]{LiaoMing:2023}}]  \label{lema:decay}
Given $n,R>0$ and consider $$f(x)=\cos(2\pi nx_1)e^{-\pi\abs{x}^2/R},$$ then $f\in\ms{B}^{s}(\Omega)$ with $\upsilon_{f,0} = 1$ and
	\begin{equation}\label{eq:decay}
		\upsilon_{f,s} \le \Lr{n+\dfrac{d}{\pi\sqrt{R}}}^s\qquad\text{for}\quad 0\le s\le 1.
	\end{equation}
\end{lemma}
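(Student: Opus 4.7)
My plan is to adapt the Bresler--Nagaraj construction to general $p$ by exhibiting a concrete damped high-frequency cosine as the hard target and reducing the $L^p$ lower bound to an $L^1$ lower bound via H\"older (since $|\Omega|=1$), which in turn is obtained by oscillation counting along one-dimensional fibers of $\Omega$.

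I take $f(x):=\lambda\cos(2\pi n x_1)e^{-\pi|x|^2/R}$ with $n\in\mathbb{N}_+$, $R>0$ and $\lambda>0$ to be tuned. Lemma~\ref{lema:decay} gives $\upsilon_{f,s}\le\lambda(n+d/(\pi\sqrt R))^s$, and $\upsilon_{f,0}\le\lambda$ follows from the Gaussian having a non-negative Fourier transform with total integral one. Setting $\lambda:=(1+\varepsilon/3)(n+d/(\pi\sqrt R))^{-s}$ enforces $\upsilon_{f,s}\le 1+\varepsilon$, and using $(1+x)^s\le 1+x^s$ for $x\ge 0$, $s\in(0,1]$, one gets $\|f\|_{\ms{B}^s(\Omega)}\le\upsilon_{f,0}+\upsilon_{f,s}\le\lambda+(1+\varepsilon/3)\le 2+\varepsilon$ (noting $\lambda\le 1+\varepsilon/3$ once $n\ge 1$). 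Enlarging $R$ so that $d/(\pi\sqrt R)\le\varepsilon n$ and $e^{-\pi|x|^2/R}\ge 1-\varepsilon/3$ on $\Omega$ secures both $\lambda\ge(1-\varepsilon)n^{-s}$ and an effective cosine amplitude on the cube arbitrarily close to $\lambda$.

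For the lower bound I fix $x_\perp\in[0,1]^{d-1}$ and study the fiber $x_1\mapsto f_{L,N}(x_1,x_\perp)$. By the standard Telgarsky--Mont\'ufar piece-counting bound, this map is piecewise affine (ReLU) or piecewise constant (Heaviside) in $x_1$ with at most $M\le(2N)^L$ pieces. Set $n:=\lceil 2(2N)^L\rceil$ and partition $[0,1]$ into sub-intervals $J_k:=[k/n,(k+1)/n]$, each carrying exactly one period of $\cos(2\pi n x_1)$. At least $n/2$ of the $J_k$ are \emph{good}, meaning they contain no breakpoint of $f_{L,N}(\cdot,x_\perp)$. The decisive algebraic fact is that the $\pm 1$-valued test function $\phi(x_1):=\mr{sign}(\cos(2\pi n x_1))$ is orthogonal on each $J_k$ to every affine function: a direct one-period computation gives $\int_{J_k}\phi\,\mr dx_1=0$ and $\int_{J_k}x_1\phi\,\mr dx_1=0$. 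Hence on a good $J_k$,
\begin{equation*}
\int_{J_k}|f-f_{L,N}|\,\mr dx_1\ge\left|\int_{J_k}f\phi\,\mr dx_1\right|\ge\frac{2\lambda(1-\varepsilon)}{\pi n},
\end{equation*}
using $\int_{J_k}|\cos(2\pi n x_1)|\,\mr dx_1=2/(\pi n)$. Summing over the $\ge n/2$ good intervals and integrating in $x_\perp$ via Fubini gives $\|f-f_{L,N}\|_{L^1(\Omega)}\ge\lambda(1-\varepsilon)/\pi$. Since $|\Omega|=1$, H\"older yields $\|\cdot\|_{L^1(\Omega)}\le\|\cdot\|_{L^p(\Omega)}$ for every $p\ge 1$ and the same is trivial for $p=\infty$; inserting $\lambda\ge(1-\varepsilon)n^{-s}$ with $n^s\le 2N^{sL}$---which holds because $sL\le 1/2$ and $L\ge 1$ force $(L+1)s\le sL+1/2\le 1$---produces~\eqref{eq:lower} after consolidating the $1\pm\varepsilon$ factors.

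The main hurdle is tight constant bookkeeping to land on exactly $(4\sqrt2\,\pi)^{-1}$; my estimate actually comes out closer to $(2\pi)^{-1}$, so the paper's prefactor leaves roughly a factor of $2\sqrt 2$ to spare. The Heaviside case goes through identically, because piecewise constants on each $J_k$ are also orthogonal to $\phi$.
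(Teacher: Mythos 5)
Your proposal does not prove the statement at hand. The statement is Lemma~\ref{lema:decay} itself: the claim that for $f(x)=\cos(2\pi n x_1)e^{-\pi\abs{x}^2/R}$ one has $\upsilon_{f,0}=1$ and $\upsilon_{f,s}\le\bigl(n+d/(\pi\sqrt{R})\bigr)^s$ for $0\le s\le 1$. What you have written is instead a proof of Theorem~\ref{Theorem: lower bound for DNN approximation}, the $L^p$ lower bound for deep networks, and in its very first step it \emph{invokes} Lemma~\ref{lema:decay} as a known fact (``Lemma~\ref{lema:decay} gives $\upsilon_{f,s}\le\lambda(n+d/(\pi\sqrt R))^s$''). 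So with respect to the target statement the argument is vacuous: the only sentence that touches its content is the remark that $\upsilon_{f,0}\le\lambda$ ``follows from the Gaussian having a non-negative Fourier transform with total integral one,'' which addresses only the trivial $s=0$ part and not the seminorm bound~\eqref{eq:decay}, which is the substance of the lemma. (Whether your oscillation-counting derivation of the lower bound is itself sound is a separate question; it is simply not what was asked.)

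A proof of the lemma requires an explicit Fourier computation rather than any network argument. Writing $G(x)=e^{-\pi\abs{x}^2/R}$, so that $\wh{G}(\xi)=R^{d/2}e^{-\pi R\abs{\xi}^2}\ge 0$ with $\int_{\mb{R}^d}\wh{G}=1$, modulation gives
\begin{equation*}
\wh{f}(\xi)=\tfrac12\bigl[\wh{G}(\xi-ne_1)+\wh{G}(\xi+ne_1)\bigr]\ge 0,
\end{equation*}
whence $\upsilon_{f,0}=\int_{\mb{R}^d}\wh{f}=1$. For the seminorm, symmetry and a shift of variables give $\upsilon_{f,s}=\int_{\mb{R}^d}\abs{\eta+ne_1}_1^s\,\wh{G}(\eta)\,\mr{d}\eta$; since $t\mapsto t^s$ is concave for $0\le s\le 1$ and $\wh{G}$ is a probability density, Jensen's inequality and the triangle inequality yield
\begin{equation*}
\upsilon_{f,s}\le\Lr{\int_{\mb{R}^d}\abs{\eta+ne_1}_1\,\wh{G}(\eta)\,\mr{d}\eta}^{s}\le\Lr{n+\int_{\mb{R}^d}\abs{\eta}_1\,\wh{G}(\eta)\,\mr{d}\eta}^{s}=\Lr{n+\frac{d}{\pi\sqrt{R}}}^{s},
\end{equation*}
where the last equality uses $\int_{\mb{R}}\abs{t}\sqrt{R}\,e^{-\pi R t^2}\,\mr{d}t=1/(\pi\sqrt{R})$ coordinatewise. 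None of these steps appear in your write-up, so the statement remains unproved as submitted.
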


\begin{proof}[Proof of Theorem~\ref{Theorem: lower bound for DNN approximation}]
Define $n=2^{L+2}N^L$ and $f(x)=n^{-s}\cos(2\pi nx_1)e^{-\pi\abs{x}^2/R}$ with large enough $R$ such that $\upsilon_{f,s}\le 1+\varepsilon$ by Lemma~\ref{lema:decay} and $e^{-\pi\abs{x}^2/R}\ge 1-\varepsilon$ when $x\in\Omega$. Then, 
\[
\|f\|_{\ms{B}^{s}(\Omega)} \le \upsilon_{f,0}+ \upsilon_{f,s}\le 2+\varepsilon.
\]
Fix $\hat{x}{:}=(x_2,\dots,x_d)$, then $f_{L,N}$ can be viewed as an one-dimensional neural network, i.e. $f_{L,N}(\cdot,\hat{x}_2):[0,1]\to\mb{C}$. Divide $[0,1]$ into $n$-internals of $[j/n,(j+1)/n]$ with $j=0,\dots,n-1$. There exists at least $n-2^{L+1}N^L=2^{L+1}N^L$ intervals such that $f_{L,N}$ does not change sign on those intervals~\cite[Lemma 3.2]{Telgarsky:2016}. Without loss of generality, we assume $f_{L,N}(\cdot,\hat{x}_2)\ge 0$ on some interval $[j/n,(j+1)/n]$, then
\[
\int_{j/n}^{(j+1)/n}|f(x)-f_{L,N}(x)|\mr{d}x_1\ge\dfrac{1-\varepsilon}{n^{s}} \int_{(4j+1)/(4n)}^{(4j+3)/(4n)}|\cos(2\pi nx_1)|\mr{d}x_1\ge\dfrac{1-\varepsilon}{\pi n^{s+1}},
\]
because $\cos(2\pi nx_1)\le 0$ when $2\pi j+\pi/2\le 2\pi nx_1\le 2\pi j+3\pi/2$.

Summing up these $n-2^{L+1}N^L$ intervals gives
\begin{align*}
\|f-f_{L,N}\|_{L^1(\Omega)}&\ge
\int_{[0,1]^{d-1}}\mr{d}\hat{x}\int_0^1|f(x)-f_{L,N}(x)|\mr{d}x_1\\
&\ge \dfrac{2^{L+1}N^L(1-\varepsilon)}{\pi n^{s+1}}\\
&\ge \dfrac{1-\varepsilon}{2^{s(L+2)+1} \pi N^{sL}}\\
&\ge\dfrac{1-\varepsilon}{4\sqrt{2} \pi N^{sL}}.
\end{align*}
The proof is completed by noting that for all $p\ge 1$,
\[
\|f-f_{L,N}\|_{L^p(\Omega)}\ge\|f-f_{L,N}\|_{L^1(\Omega)}  .
\]
\end{proof}

\section{Conclusion}
We provide a comprehensive analysis of neural network approximation capabilities for spectral Barron functions, significantly lowering the smoothness requirements compared to existing theories. By establishing approximation rates for both shallow and deep networks, we show that functions with small smoothness can achieve substantial approximation efficiency. Our findings indicate that shallow networks attain Monte Carlo rates for functions in $\mathscr{B}^{1/2}$, while deeper networks further reduce the necessary smoothness requirements. The sharpness of the approximation rates is confirmed, with lower bounds matching the upper bounds, up to a logarithmic factor. Future research could explore higher-order convergence results for spaces with greater smoothness.
\appendix
\section{Proof of Lemma~\ref{Lemma: Bound for Gamma function}} 
\label{Appendix Section: Bound for Gamma function}

%In this appendix, we prove~\cref{Lemma: Bound for Gamma function}.

\begin{proof}
The proof is divided into two parts, dealing with the cases $p \in [2, 7/3)$ and $p \in [7/3, +\infty)$, respectively.

For $p \in [2, 7/3)$, let $x = (p+1)/2$. We claim that for $x \in [3/2, 5/3)$, 
$$h(x) :=\frac{1}{2 x-1} \ln \Gamma(x)-\frac{\ln \pi}{2(2 x-1)}-\frac{1}{2} \ln (2 x-1) $$
is monotonically decreasing. 

Denote by $\psi(x)=(\ln \Gamma(x))^{\prime}= \Gamma^{\prime}(x)/\Gamma(x)$ the digamma function. The derivative of $h$ is 
\[
h^{\prime}(x) =\frac{(2 x-1)\left(\psi(x)-1\right)-2 \ln \Gamma(x)+\ln \pi}{(2x-1)^2} .
\]
Since
\[\Gamma^{(k)}(x)=\int_{0}^{\infty} t^{x-1} e^{-t}(\ln t)^{k}\mr{d}t, \qquad k\in\mathbb{N},
\]
by Cauchy-Schwarts inequality, $(\Gamma^{\prime}(x))^2 \le \Gamma^{\prime\prime}(x)\Gamma(x),$ which implies $\psi^{\prime}(x) \ge 0$ and that $\psi$  is monotonically nondecreasing. 
Using the integral representation of the digamma function~\cite[eq. (2.10)]{Artin1964gamma}, 
$$\psi(s+1)=\int_{0}^{1} \frac{1-t^{s}}{1-t} \mr{d} t-\gamma, \qquad \text{for any } s\ge 0, $$
where $\gamma\approx0.58$ is the Euler constant. A direct calculation gives the values of $\psi$ at the endpoints
\[
\psi(3/2) =\int_{0}^{1} \frac{1-\sqrt{t}}{1-t} \mr{d} t-\gamma = 2 - 2\ln 2 -\gamma,
\]
and
\begin{align*}
\psi(5/3) & =\int_{0}^{1} \frac{1-t^{2/3}}{1-t} \mr{d} t-\gamma \\
& =\left.\left[\frac{3}{2}\left(t^{2 / 3}-\ln(t^{2 / 3}+\sqrt[3]{t}+1)\right) +\sqrt{3} \tan ^{-1}\left(\frac{2 \sqrt[3]{t}+1}{\sqrt{3}}\right)\right]\right|_{0} ^{1}-\gamma \\
& =\frac{1}{6}(9+\sqrt{3} \pi-9 \ln 3)-\gamma, %<-\gamma+0.76<1
\end{align*}
where $\tan^{-1}$ is the inverse function of the tangent function. Since $\psi(3/2) > 0.6 - \gamma>0$ and $\psi$ is monotonically nondecreasing,  $\psi(x) > 0$ and $\Gamma(x)$ monotonically increase for all $x \ge 3/2$, which implies 
$$-2\ln \Gamma(x)+\ln \pi \le -2\ln \Gamma(3/2)+\ln \pi =2\ln2, \quad \text{for } x\in [3/2,5/3].$$ 
Since $\psi(5/3) < 0.76 - \gamma < 1$ and $\psi$ is monotonically nondecreasing, for $x\in [3/2,5/3]$, $\psi(x) - 1 < 0$ and  $$(2 x-1)(\psi(x)-1) \le 2(\psi(x)-1) 
<-2(\gamma+0.24).$$
Then, we obtain $h^{\prime}(x) < 0$ from $$(2 x-1)\left(\psi(x)-1\right)-2 \ln \Gamma(x)+\ln \pi \le 2(\ln 2-\gamma-0.24)<0.$$ 
The claim is proved, which immediately implies that for $p\in [2, 7/3)$,
\[
\pi^{-1/(2p)}  \Gamma\left((p+1)/2\right)^{1/p} p^{-1/2}=\exp(h((p+1)/2))
\leq \exp(h(3/2)) = 1/2.
\]

Next, for $p\ge 7/3,$ we leverage Stirling's approximation~\cite[eq. (3.9)]{Artin1964gamma},
$$\Gamma(x) \le \sqrt{2 \pi} x^{x-1/2} e^{-x+1 /(12 x)}\quad \text{for all } x>0.$$
With the choice $x = (p+1)/2$, we obtain
$$\pi^{-1/(2 p)} \Gamma\left(\frac{p+1}{2}\right)^{1 / p} p^{-1/2} \le \left(\frac{2}{e}\right)^{1 /(2 p)}\left(\frac{p+1}{2 e p}\right)^{1/2} e^{1/[6 p(p+1)]}.$$
We denote the logarithm of the right side of the above inequality as 
$$g(p) = \frac{1}{2 p} \ln \left(\frac{2}{e}\right)+\frac{1}{2} \ln \left(\frac{p+1}{2 e p}\right)+\frac{1}{6 p(p+1)}. $$
Then, $g(p)$ monotonically decrease because 
\begin{equation*}
\begin{aligned}
g^{\prime}(p) %& =\frac{-1}{2 p^{2}} \ln \left(\frac{2}{e}\right)-\frac{1}{2 p(p+1)}+\frac{1}{6}\left(\frac{1}{(p+1)^{2}}-\frac{1}{p^{2}}\right) \\
& =\frac{1}{2 p^{2}}\left[\ln \left(\frac{e}{2}\right)-\frac{1}{3}-\dfrac{p(2p+3)}{3(p+1)^{2}}\right]<0.
\end{aligned}
\end{equation*}
We may conclude that, for $p\ge 7/3$,
\[
e^{g(p)} \le e^{g(7/3)}=2^{3/14} \sqrt{5/7} e^{-97/140}<1/2,
\]
which completes the proof. 
\end{proof}
\section{The sharpness of the embedding relation \texorpdfstring{$\ms{B}^s \hookrightarrow C^s$}{}}~\label{AppendixB}

By~\cite[Corollary 2.13]{LiaoMing:2023}, $\ms{B}^s(\mb{R}^{d}) \hookrightarrow C^s(\mb{R}^{d})$, where $C^s(\mb{R}^{d})$ denotes the H\"older space of order $s$. 
We shall construct an example to illustrate that this embedding is essentially sharp, and generally, the H\"older continuity index of the $\ms{B}^s$-functions can not be higher than $s$.
Therefore, the target function space in this paper, $\ms{B}^s$ with small $s$, may contain many rough functions.  
\begin{lemma} \label{Lemma: Barron and Holder regularity of Bessel potentials}
Let $\alpha \in (0,1)$ and $f:\mb{R}^{d}\to \mb{R}$ be the Bessel potential of order $\alpha +d$
\begin{equation} \label{eq: def Bessel potential}
f(x)=\int_{\mathbb{R}^{d}}\left(1+4 \pi^{2}|\xi|^{2}\right)^{-(\alpha+d) / 2} e^{i 2 \pi \xi \cdot x} \mr{d} \xi .
\end{equation}
Then, 
\begin{enumerate}
\item[(1)]
$f \in \ms{B}^{s}(\mb{R}^{d})$ if and only if $s<\alpha$; 

\item[(2)] $f \in C^{s}(\mb{R}^{d})$ if and only if $s \le \alpha$. 
\end{enumerate}
\end{lemma}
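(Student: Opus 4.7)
The plan is to work directly from the Fourier-analytic definition $\wh{f}(\xi)=(1+4\pi^2\abs{\xi}^2)^{-(\alpha+d)/2}$, exploiting the radial decay of $\wh{f}$, and to handle part (1) by a polar-coordinates computation and part (2) by a combination of a Fourier-based $C^s$ upper bound and a sharp lower bound on $f(x)-f(0)$.

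For part (1), since $f$ is already defined on all of $\mb{R}^d$, no extension is needed, so
\[
\nm{f}{\ms{B}^s(\mb{R}^d)}=\int_{\mb{R}^d}(1+\abs{\xi}_1)^s(1+4\pi^2\abs{\xi}^2)^{-(\alpha+d)/2}\mr{d}\xi.
\]
Using $\abs{\xi}\le\abs{\xi}_1\le\sqrt{d}\abs{\xi}$ to pass to the Euclidean norm and then polar coordinates, the integrand behaves like $r^{s-\alpha-d}$ for large $r$, and the radial integral $\int^\infty r^{s-\alpha-1}\mr{d}r$ converges precisely when $s<\alpha$. Integrability near the origin is automatic, giving the claim.

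For the sufficient direction of part (2), fix $s\le\alpha$ and $h\in\mb{R}^d\setminus\{0\}$. By Fourier inversion,
\[
\abs{f(x+h)-f(x)}\le\int_{\mb{R}^d}\abs{e^{i2\pi\xi\cdot h}-1}\wh{f}(\xi)\mr{d}\xi\le 2\int_{\mb{R}^d}\min(1,\pi\abs{\xi}\abs{h})\wh{f}(\xi)\mr{d}\xi,
\]
and splitting at $\abs{\xi}=1/\abs{h}$ gives two pieces: the low-frequency piece contributes $C\abs{h}\int_0^{1/\abs{h}}r^{1-\alpha}\mr{d}r\asymp\abs{h}^{\alpha}$ (since $\alpha<1$), while the high-frequency piece contributes $C\int_{1/\abs{h}}^\infty r^{-\alpha-1}\mr{d}r\asymp\abs{h}^\alpha$. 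This yields $f\in C^\alpha(\mb{R}^d)\hookrightarrow C^s(\mb{R}^d)$.

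For the necessary direction of part (2), I need a matching lower bound $\abs{f(x)-f(0)}\gtrsim\abs{x}^\alpha$ as $x\to 0$. The plan is to use the heat-semigroup representation
\[
f(x)=\frac{(4\pi)^{-d/2}}{\Gamma((\alpha+d)/2)}\int_0^\infty t^{\alpha/2-1}e^{-t}e^{-\abs{x}^2/(4t)}\mr{d}t,
\]
which follows from $(1+4\pi^2\abs{\xi}^2)^{-(\alpha+d)/2}=\Gamma((\alpha+d)/2)^{-1}\int_0^\infty t^{(\alpha+d)/2-1}e^{-t(1+4\pi^2\abs{\xi}^2)}\mr{d}t$ and the Gaussian Fourier identity. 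Then
\[
f(0)-f(x)=\frac{(4\pi)^{-d/2}}{\Gamma((\alpha+d)/2)}\int_0^\infty t^{\alpha/2-1}e^{-t}\bigl(1-e^{-\abs{x}^2/(4t)}\bigr)\mr{d}t,
\]
and the substitution $u=\abs{x}^2/(4t)$ converts this to
\[
f(0)-f(x)=\frac{(\abs{x}/2)^\alpha(4\pi)^{-d/2}}{\Gamma((\alpha+d)/2)}\int_0^\infty u^{-\alpha/2-1}e^{-\abs{x}^2/(4u)}\bigl(1-e^{-u}\bigr)\mr{d}u.
\]
As $x\to 0$, dominated convergence (justified by $u^{-\alpha/2-1}(1-e^{-u})$ being integrable on $(0,\infty)$ for $\alpha\in(0,1)$, behaving like $u^{-\alpha/2}$ near $0$ and $u^{-\alpha/2-1}$ at infinity) gives the limit $-\Gamma(-\alpha/2)>0$. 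Hence $f(0)-f(x)=c_\alpha\abs{x}^\alpha+o(\abs{x}^\alpha)$ with $c_\alpha>0$, so $f\notin C^s$ for any $s>\alpha$.

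The main obstacle will be the last step: justifying the dominated convergence uniformly enough to extract the sharp leading constant, and verifying that $\Gamma(-\alpha/2)\ne 0$ so that the lower bound is non-degenerate. Everything else is routine manipulation of elementary integrals.
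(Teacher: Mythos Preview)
Your proposal is correct. Part~(1) and the sufficient direction of part~(2) match the paper's arguments in spirit: the paper first reduces to a one-dimensional integral via the radial representation~\eqref{eq: radial function representation of f} (obtained from Euler's beta integral) before performing the low/high-frequency split, whereas you split directly in $d$ dimensions, but the computations are otherwise identical.

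The necessary direction of part~(2) is where your argument genuinely diverges from the paper's. The paper works from the one-dimensional oscillatory representation $f(0)-f(x)=C\int_0^\infty\langle\xi\rangle^{-\alpha-1}(1-\cos(\abs{x}\xi))\,\mr{d}\xi$, discards the region $\xi<\pi/(2\abs{x})$, and then shows by grouping cosine periods that the remaining $\cos$-contribution is non-positive, leaving an explicit $\abs{x}^\alpha$ lower bound. Your subordination formula is cleaner: the substitution $u=\abs{x}^2/(4t)$ extracts the exact power $\abs{x}^\alpha$ with no loss, and dominated convergence immediately identifies the limiting constant as $-\Gamma(-\alpha/2)$. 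The positivity you worry about is automatic, since the integrand $u^{-\alpha/2-1}(1-e^{-u})$ is strictly positive on $(0,\infty)$; no appeal to properties of $\Gamma$ at negative arguments is needed. Your route yields the precise asymptotic $f(0)-f(x)\sim c_\alpha\abs{x}^\alpha$ with an explicit constant, whereas the paper's period-grouping argument gives only a lower bound; on the other hand, the paper's approach stays entirely on the Fourier side and avoids introducing the heat-kernel representation.
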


\begin{proof}
(1) For any vector $\xi$, we denote the bracket $\langle\xi\rangle = (1+|\xi|^{2})^{1/2}$. By~\cite{Stein:1971}, 
\[
\wh{f}(\xi) = (1+4 \pi^{2}|\xi|^{2})^{-(\alpha+d) / 2}.
\]
It follows from $(1+|\xi|_{1})^{2} \le 2 d\langle\xi\rangle^{2}$ that
\[
\|f\|_{\ms{B}^{s}(\mb{R}^{d})}  = \int_{\mb{R}^{d}}(1+|\xi|_{1})^{s}\langle2\pi\xi\rangle^{-\alpha-d} \mr{d} \xi\le (2 d)^{s / 2} \int_{\mb{R}^{d}}\langle\xi\rangle^{s-\alpha-d} \mr{d} \xi ,
\]
which implies $f \in \ms{B}^{s}(\mb{R}^{d})$ when $s<\alpha$. Similarly, by $1+|\xi|_{1} \ge \langle2\pi\xi\rangle/(2 \pi)$, we have the lower bound 
\begin{equation*}
\begin{aligned}
\|f\|_{\ms{B}^{s}(\mb{R}^{d})} & \ge (2 \pi)^{-s} \int_{\mb{R}^{d}}\langle2\pi\xi\rangle^{s-\alpha-d} \mr{d} \xi ,
\end{aligned}
\end{equation*}
which implies $f \notin \ms{B}^{s}(\mb{R}^{d})$ when $s \ge \alpha$. 

(2) Note that $f$ is a radial function. First, we derive a representation of $f$ in terms of $\abs{x}$.
For $d \ge 2$, denote $\xi = (\xi_{1},\xi^{\prime})$ with $\xi^{\prime} \in \mb{R}^{d-1}$. By rotation invariance, a direct manipulation yields
\begin{equation*}
\begin{aligned}
f(x) %& = \frac{1}{(2\pi)^d}\int_{\mb{R}^{d}}\langle\xi\rangle^{-\alpha-d} e^{-i |x|  \xi_{1}} \mr{d} \xi \\
& = \frac{1}{(2\pi)^d}\int_{\mb{R}^{d}}\left(\langle\xi_1\rangle^{2} + |\xi^{\prime}|^{2}\right)^{-(\alpha+d) / 2} e^{i |x|  \xi_{1}} \mr{d} \xi \\
& = \frac{\omega_{d-1}}{(2\pi)^d}\int_{\mb{R}}\left[\int_{0}^{\infty}\left(\langle\xi_1\rangle^{2} + r^{2}\right)^{-(\alpha+d) / 2}r^{d-2} \mr{d} r\right] e^{i |x| \xi_{1}} \mr{d} \xi_{1} ,
\end{aligned}
\end{equation*}
where $\omega_{d-1} = 2\pi^{(d-1)/2}/ \Gamma((d-1)/2)$ denotes the surface area of $(d-1)$-dimensional unit ball. With variable substitution $t= r^{2}/(\langle\xi_1\rangle^{2} + r^{2})$, the inner integral with respect to $r$ can be expressed as Euler’s beta integral
\begin{equation*}
\begin{aligned}
\frac{1}{2\langle\xi_1\rangle^{\alpha+1}} \int_{0}^{1} t^{(d-3) / 2}(1-t)^{(\alpha-1) / 2} d t
= \frac{\Gamma((d-1)/2) \Gamma((\alpha+1)/2)}{2\Gamma((\alpha+d)/2) \langle\xi_1\rangle^{\alpha+1}} , 
\end{aligned}
\end{equation*}
which implies
\begin{equation} \label{eq: radial function representation of f}
\begin{aligned}
f(x) & =  \frac{\Gamma((\alpha+1)/2)}{2^{d}\pi^{(d+1)/2} \Gamma((\alpha+d)/2)} \int_{\mb{R}}\langle\xi\rangle^{-\alpha-1} e^{i |x| \xi} \mr{d} \xi .
\end{aligned}
\end{equation}
The same expression also holds for $d=1$.

Next, we prove $f \in C^{s}(\mb{R}^{d})$ for $s \le \alpha$. 
For any $x,y\in \mb{R}^{d}$, let $z = \left||x|-|y|\right|$ and then $z\le |x-y|$. If $z \ge 1$, it follows from Hausdorff-Young inequality that  $$|f(x)-f(y)| \le \|f\|_{L^{\infty}(\mb{R}^{d})}\le \|f\|_{\ms{B}^{0}(\mb{R}^{d})}|x-y|^{s}.$$
If $z=0$, then \eqref{eq: radial function representation of f} yields $|f(x)-f(y)| = 0$.  

If $0 < z < 1$, then by \eqref{eq: radial function representation of f}, we have
\begin{equation*} \label{ineq: first upper bound for |f(x)-f(y)|}
\begin{aligned}
|f(x)-f(y)| & \le \frac{\Gamma((\alpha+1)/2)}{2^{d}\pi^{(d+1)/2} \Gamma((\alpha+d)/2)} \int_{\mathbb{R}} \langle\xi_{1}\rangle^{-\alpha-1}\left|e^{i z \xi}-1\right| \mr{d} \xi  \\
& \le \frac{\Gamma((\alpha+1)/2)}{2^{d-2}\pi^{(d+1)/2} \Gamma((\alpha+d)/2)} \int_{0}^{\infty} \langle\xi\rangle^{-\alpha-1}\left|\sin(z \xi/2)\right| \mr{d} \xi ,
\end{aligned}
\end{equation*}
where we have used $\abs{e^{i z \xi}-1}^{2}=4 \sin^{2}(z \xi/2)$ in the last equality. 

Using $\left|\sin t\right|\le \max(|t|,1)$ and $\langle\xi\rangle^{-1}\le \sqrt{2}(1+|\xi|)^{-1}$, we bound the above integral by
\begin{equation*}
\begin{aligned}
 \int_{0}^{\infty} \langle\xi\rangle^{-\alpha-1}\left|\sin(z \xi/2)\right| \mr{d} \xi 
& \le \frac{z}{2} \int_{0}^{\pi/z} \langle\xi\rangle^{-\alpha-1} \xi \mr{d} \xi +  \int_{\pi/z}^{\infty} \langle\xi\rangle^{-\alpha-1} \mr{d} \xi \\
%& \le \frac{2^{(\alpha+1) / 2}z}{\pi} \int_{0}^{\pi/z} \left(1+|\xi|\right)^{-\alpha}  \mr{d} \xi +  \frac{2^{(\alpha+3) / 2}}{\pi} \int_{\pi/z}^{\infty} \left(1+|\xi|\right)^{-\alpha-1} \mr{d} \xi \\
& \le \frac{2^{(\alpha-1) / 2}z}{ 1-\alpha}  \left(1+\pi/z\right)^{1-\alpha} +  \frac{2^{(\alpha+1) / 2}}{\alpha} \left(1+\pi/z\right)^{-\alpha}, \\
\end{aligned}
\end{equation*}
which together with the fact $1+\pi/z \le 2\pi/z$ gives
\begin{equation*}
\begin{aligned}
|f(x)-f(y)| & \le \frac{\Gamma((\alpha+1)/2)}{2^{d-5/2}\pi^{(d+1)/2+\alpha} \Gamma((\alpha+d)/2)} \left(\frac{2^{-\alpha / 2} \pi}{1-\alpha}  +  \frac{2^{\alpha / 2}}{\alpha} \right) z^{\alpha}.  
\end{aligned}
\end{equation*}
Note that $z^{\alpha}\le z^{s}$ when $z<1$. A combination of the estimates for $z\ge 1$, $z=0$ and the above estimate yields that $f \in C^{s}(\mb{R}^{d})$ for $s \le \alpha$. 

Next, it is sufficient to prove that $|f(x)-f(0)|$ asymptotically behaves as $|x|^{\alpha}$ near the origin, which implies $f \notin C^{s}(\mb{R}^{d})$ for $s > \alpha$. Let $x\in\mb{R}^{d}$ and $0<|x|<1$. It follows from \eqref{eq: radial function representation of f}, evenness and $1-\cos(|x| \xi) \ge 0$ that 
\begin{equation*} 
\begin{aligned}
|f(x)-f(0)| & =  \frac{\Gamma((\alpha+1)/2)}{2^{d-1}\pi^{(d+1)/2} \Gamma((\alpha+d)/2)} \int_{0}^{\infty}\langle\xi\rangle^{-\alpha-1} \left(1-\cos(|x| \xi)\right) \mr{d} \xi \\
& \ge \frac{\Gamma((\alpha+1)/2)}{2^{d-1}\pi^{(d+1)/2} \Gamma((\alpha+d)/2)} \int_{\pi/(2|x|)}^{\infty}(1+\xi)^{-\alpha-1} \left(1-\cos(|x| \xi)\right) \mr{d} \xi ,
\end{aligned}
\end{equation*}
where the first part in the integral has an exact lower bound
\begin{equation*} 
\begin{aligned}
\int_{\pi/(2|x|)}^{\infty}(1+\xi)^{-\alpha-1} \mr{d} \xi = \frac{1}{\alpha}\left(1+\frac{\pi}{2|x|}\right)^{-\alpha} \ge \frac{|x|^{\alpha}}{\alpha\pi^{\alpha}}.
\end{aligned}
\end{equation*}
For the other part, we split the integral into the sum over each period and obtain 
\begin{equation*} 
\begin{aligned}
& \quad \int_{\pi/(2|x|)}^{\infty}(1+\xi)^{-\alpha-1} \cos(|x| \xi) \mr{d} \xi\\
&= \sum_{k=0}^{\infty} \int_{(1+4k)\pi/(2|x|)}^{(3+4k)\pi/(2|x|)} \left[(1+\xi)^{-\alpha-1} - (1+\xi+\pi/|x|)^{-\alpha-1} \right] \cos(|x| \xi) \mr{d} \xi ,
\end{aligned}
\end{equation*}
which is non-positive because $t^{-\alpha-d}$ monotonically decreases and $\cos(|x| \xi) \le 0$ for $|x| \xi\in [(1+4k)\pi/2,(3+4k)\pi/2]$. The proof completes by combining the above three estimates. 
\iffalse
\begin{equation*} 
\begin{aligned}
|f(x)-f(0)| & \ge \frac{\Gamma((\alpha+1)/2)}{2^{d-1}\alpha \pi^{(d+1)/2+\alpha} \Gamma((\alpha+d)/2)} |x|^{\alpha},
\end{aligned}
\end{equation*} \fi
\end{proof}

\begin{figure}[htbp]
  \centering
  \includegraphics[width=0.49\textwidth]{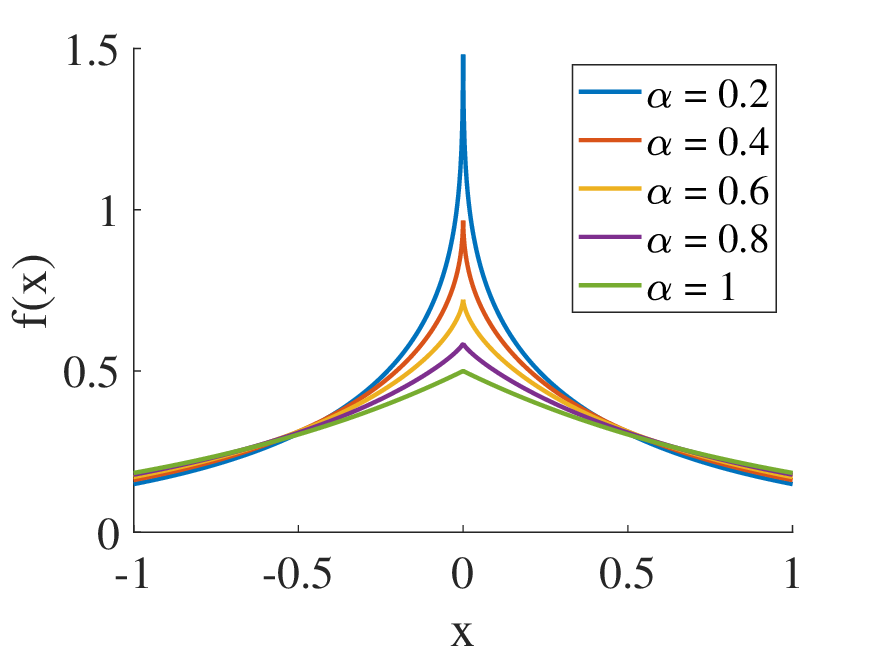}
  \includegraphics[width=0.5\textwidth]{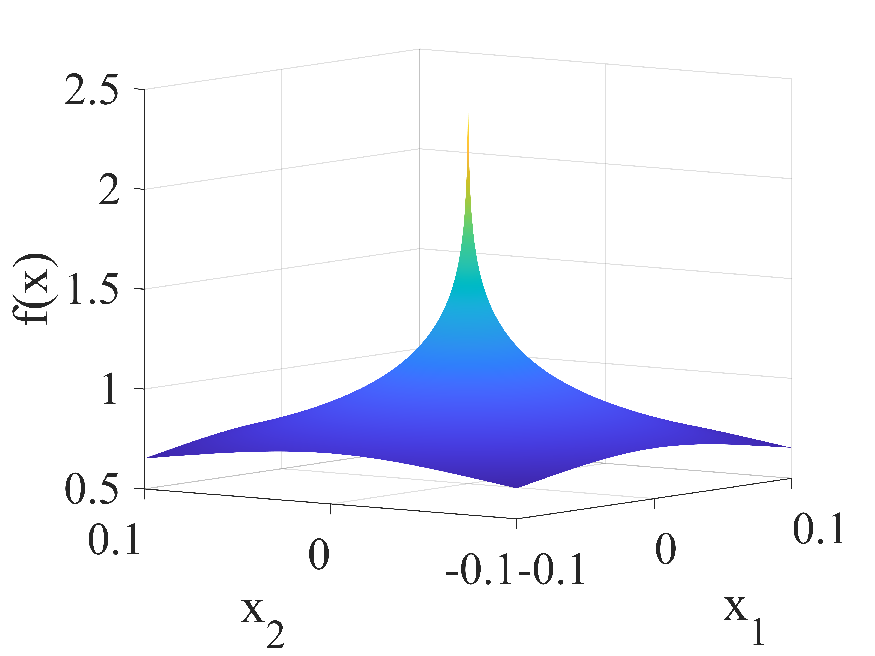}
\caption{The plots of $f$ defined in~\eqref{eq: def Bessel potential}. Left: $d=1$ with different $\alpha$. Right: $d=2$ and $\alpha=0.05$.} 
  \label{Figure: Bessel potential}
\end{figure}

By~\eqref{eq: radial function representation of f},  $f$ defined in \eqref{eq: def Bessel potential} is a radial function with the representation
\[
f(x)
=  \left[2^{d+\alpha/2-1}\pi^{d/2} \Gamma((\alpha+d)/2)\right]^{-1}  |x|^{\alpha/2}  K_{\alpha/2}(|x|),
\]
where $K_{\nu}$ is modified Bessel function of the second kind
\[
K_{\nu}(z) = \frac{\Gamma(\nu+1 / 2)(2 z)^{\nu}}{\sqrt{\pi}} \int_{0}^{\infty} \left(t^{2}+z^{2}\right)^{-\nu-1 / 2} \cos t \ \mr{d} t.
\]
If $\alpha=1$, then $f$ has an explicit expression as
\[
f(x) = 2^{-d}\pi^{(1-d)/2} \Gamma((d+1)/2)^{-1} e^{-|x|},
\]
because $K_{1/2}(z)=\sqrt{\pi/(2z)}e^{-z}$~\cite[\S 1.4.6, eq. (9)]{Luke:1962}. 
%The same argument in the proof of \cref{Lemma: Barron and Holder regularity of Bessel potentials} yields that $f \in \ms{B}^{s}(\mb{R}^{d})$ if and only if $s<1$. Additionally,  $f$ is Lipschitz continuous and $f \notin C^{s}(\mb{R}^{d})$ for $s \ge 1$.We observe that dimensionality only affects the prefactor. 
To visualize the singularity of $f$ near the origin, we plot $f$ for $d=1$ with different values of $\alpha$, and for $d=2$ with $\alpha=0.05$ in Figure~\ref{Figure: Bessel potential}. 
As proved in Lemma~\ref{Lemma: Barron and Holder regularity of Bessel potentials}, $f(0)-f(x)$ asymptotically behaves as $\abs{x}^{\alpha}$. The smaller the $\alpha$, the stronger the singularity. For $\alpha=0.05$, $f \in \ms{B}^{s}(\mb{R}^{d})$ with $s<0.05$. The results in Section~\ref{Section: deep neural network approximation} ensure that the networks of $2$, $6$ and $10$ hidden layers are sufficient to nearly achieve convergence rates of $\mc{O}(N^{-1/10})$, $\mc{O}(N^{-3/10})$, and $\mc{O}(N^{-1/2})$, respectively. %{\red YL: ($2$, $6$ and $10$ hidden layers are sufficient to nearly achieve convergence rates) better?}
\iffalse
\begin{remark}
For $\alpha=1$, $f$ defined in \cref{eq: def Bessel potential} has an explicit expression
\begin{equation*} 
f(x)  = 2^{-d}\pi^{(1-d)/2} \Gamma((d+1)/2)^{-1} e^{-|x|},
\end{equation*}
since $K_{1/2}(z)=\sqrt{\frac{\pi}{2z}}e^{-z}$~\cite[\S 1.4.6, eq. (9)]{Luke:1962}. The same argument in the proof of \cref{Lemma: Barron and Holder regularity of Bessel potentials} yields that $f \in \ms{B}^{s}(\mb{R}^{d})$ if and only if $s<1$. Additionally,  $f$ is Lipschitz continuous and $f \notin C^{s}(\mb{R}^{d})$ for $s \ge 1$.
\end{remark}\fi
%

%%%%%%%%% Template replacement %%%%%%%%%
\iffalse 
\fi

\bibliography{references}
\end{document}